\newcommand{\abbr}[1]{{\sc\lowercase{#1}}}
\providecommand{\U}[1]{\protect\rule{.1in}{.1in}}
\newtheorem{theorem}{Theorem}
\newtheorem{condition}[theorem]{Condition}
\newtheorem{definition}[theorem]{Definition}
\newtheorem{lemma}[theorem]{Lemma}
\newtheorem{proposition}[theorem]{Proposition}
\newtheorem{remark}[theorem]{Remark}
\newenvironment{proof}[1][Proof]{\noindent\textbf{#1.} }{\ \rule{0.5em}{0.5em}}
\newcommand{\B}{\mathbb{B}}
\newcommand{\E}{\mathbb{E}}
\newcommand{\K}{\mathbb{K}}
\newcommand{\N}{\mathbb{N}}
	\renewcommand{\P}{\mathbb{P}}
\newcommand{\R}{\mathbb{R}}
\newcommand{\wt}{\widetilde}
\newcommand{\ovl}{\overline}
\newcommand{\ep}{\epsilon}
\newcommand{\cD}{\mathcal{D}}
\newcommand{\cF}{\mathcal{F}}
\newcommand{\cK}{\mathcal{K}}
\newcommand{\cL}{\mathcal{L}}
\newcommand{\cM}{\mathcal{M}}
\newcommand{\cN}{\mathcal{N}}
\newcommand{\cT}{\mathcal{T}}
\newcommand{\cY}{\mathcal{Y}}
\begin{document}
\title{Scaling limit for a second-order particle system\\
 with local annihilation\footnote{Keywords and phrases: second-order system; scaling limit; annihilating particles; local interaction; hypoellipticity.}
\footnote{AMS subject classification (2020): 60K35, 82C21, 82C40, 35J70.}}
\author{Ruojun Huang\footnote{ Fachbereich Mathematik und Informatik, Universit\"at Münster. Einsteinstr. 62, 48149 M\"unster, Germany.  E-mail: ruojun.huang@uni-muenster.de.}}

\date{\today}
\maketitle

\abstract{For a second-order particle system in $\R^d$ subject to locally-in-space pairwise annihilation, we prove a scaling limit for its empirical measure on position and velocity towards a degenerate elliptic partial differential equation. Crucial ingredients are Green's function estimates for the associated hypoelliptic operator and an It\^o-Tanaka trick.}

\section{Introduction}
Second-order particle systems comprising of dynamics of positions and velocities, in the spirit of Newtonian mechanics, give a more complete description of interacting particles. When the behavior of a first-order system of positions only is understood, or is insufficient for one's purpose, it is natural to look at a second-order system. The technical difficulties associated with second-order systems are in general greater, some coming from the degeneracy of the associated operators. Here, we explore a simplest model of interacting particles {\it{with local interactions}} in $\R^d$ written in second-order, where the interaction occurs not in the equation of the motion but in pairwise annihilation. Besides being well-related to a large body of works on annihilating (or proliferating, coagulating, colliding etc) particle systems and their scaling limits (e.g. \cite{LX, ASZ, Oe, Oe2, MC, Rez, HR, FLO, FH}) and generally relevant for reaction-diffusion phenomena, one motivation is also from the technical side, namely, we aim to derive a scaling limit when the interaction range between particles is truly local (see Condition \ref{cond:local}).

Given a probability space $(\Omega,\cF,\P)$, for any positive integer $N\in\N$ and $d\ge 1$, consider a system of particles, whose cardinality at time $t=0$ is $N(0)=N$, and with positions and velocities at any time $t\ge0$ denoted by $x_i^N(t)\in\R^d$, $v_i^N(t)\in\R^d$, $i=1,...,N(t)$, and the interaction comes from the possibility of pairwise annhilation when spatial positions are close. More specifically, the position and velocity of each particle, while active, obey the second-order dynamics
\begin{align}\label{dynamics}
\begin{cases}
x_i^N(t) = x_i(0)+\int_0^t v_i^N(s)ds\\
v_i^N(t) = v_i(0) + B_i(t)
\end{cases}
,\quad t\ge 0, \, i\in\cN(t),
\end{align}
where $\cN(t)$ denotes the set of indices of active particles at time $t$ whose cardinality $|\cN(t)|=N(t)\le N$, $\{B_i(t)\}_{i=1}^\infty$ is a collection of independent standard Brownian motions in $\R^d$, and the initial conditions $(x_i(0),v_i(0))_{i=1}^\infty$ are random, chosen i.i.d. with some given probability density $f_0(x,v):\R^{2d}\to\R_+$ satisfying the condition below, independent of the Brownian motions.
\begin{condition}\label{cond:ini}
For every $d\ge1$, we assume that there exist some finite constants $\Gamma$ and $R$ such that $\|f_0(x,v)\|_{L^\infty(\R^{2d})}\le\Gamma$ and supp$(f_0)\subset \B_E((0,0), R)$, the closed Euclidean ball of radius $R$ centered at the origin in $\R^{2d}$. 
\end{condition}
The probability space is endowed with the canonical filtration 
\begin{align*}
\cF_t:=\sigma\big\{\{B_i(s): 0\le s\le t\}_{i=1}^\infty, (x_i(0),v_i(0))_{i=1}^\infty\big\}, \quad t\ge0.
\end{align*}
Interaction between different particles comes by means of pairwise annhilation, informally described as follows. Let $\theta(x):\R^d\to\R_+$ be given, nonnegative, of class $C^\infty$, with $\int\theta dx =1$, $\theta(0)=0$, $\theta(x)=\theta(-x)$ and compact support in $\B_E(0,1)$. Then, for any $\ep\in(0,1]$, denote by 
\begin{align*}
\theta^\ep(x):=\ep^{-d}\theta(\ep^{-1}x).
\end{align*}
(For the necessity of the condition $\theta(0)=0$, see Remark \ref{rmk:theta}.)
A configuration of the particle system is given by 
\begin{align*}
\zeta = (x_1,v_1, x_2, v_2, ...,x_N, v_N) \in(\R^d\cup\emptyset)^{2N}
\end{align*}
where we adopt the convention that if a particle $i_0$ has been annihilated, we set $x_{i_0}=v_{i_0}=\emptyset$. It will no more participate in the dynamics of the system. Now suppose the current configuration is $\zeta$, then for any pair of particles $i,j$ with $i \neq j$ (each ranging over the index set $\cN(\zeta)$ of active particles), with an infinitesimal rate 
\begin{align*}
\frac{1}{N}\theta^\ep(x_i-x_j)
\end{align*}
we remove both particles $i,j$ from the system (i.e. they annihilate with each other) and obtain a new configuration, denoted $\zeta^{-ij}$ which is equal to $\zeta$ in all other slots except for $x_i,v_i,x_j,v_j$ which have been set to $\emptyset$. Since the support of the function $\theta^\ep$ has radius $\ep$, interaction happens for the pair $i,j$ only when their spatial distance is within the critical distance $\ep$. In fact, our interest is in the case where $\ep=\ep(N)$ depends on $N$ and converges to zero as $N\to\infty$.  An important case to cover is the regime of local interaction, corresponding to $\ep = N^{-1/d}$ in which case an individual particle typically interacts with a bounded number of others at any given time. Contrary to lattice models \cite{DP, KL} where nearest-neighbor or bounded-range interactions are standard, in our continuum case such local interaction has to be enforced by a potential (i.e. $\theta^\ep$), which is a source of technical difficulty. One of the key techniques available to date to handle such local interaction, or sometimes even more stringently the mean-free path interaction, is due to Hammond and Rezakhanlou \cite{HR}, see also \cite{LX, ASZ, Va, Uc} for other techniques. On the other hand, mean-field type models correspond to choosing $\ep$ independent of $N$, and in that setting techniques for proving scaling limits of (both first and second-order) particle systems have become very advanced, cf. the survey \cite{JW}, \cite{BJS}, even when interactions are very singular and influence directly the stochastic differential equations. Clearly, there is a big technical gap between mean-field and more local cases, which serves as another motivation for this research.

\begin{condition}\label{cond:local}
For any $d\ge1$, we assume that $\ep=\ep(N)\to0$ as $N\to\infty$ and 
\begin{align*}
\limsup_{N\to\infty}\frac{\ep^{-d}}{N}\le 1.
\end{align*}
When the equality is achieved, we call the interaction local.
\end{condition}
We now give formally the infinitesimal generator of the jointly Markov process $(x_i^N(t), v_i^N(t))_{i=1}^{\cN{(t)}}$:
\begin{align}\label{gen}
\cL_N=\cL_N^D+\cL_N^J
\end{align}
where the diffusion and respectively, jump (i.e. annhilation) part of the generator are (acting on test functions $F$)
\begin{align*}
\cL_N^DF(\zeta)&:=\sum_{i\in\cN(\zeta)}\big[v_i\cdot \nabla_{x_i}F(\zeta)+\frac{1}{2}\Delta_{v_i}F(\zeta)\big] \\
\cL_N^JF(\zeta)&:=\sum_{i\in\cN(\zeta)}\sum_{j\in\cN(\zeta): j\neq i}\frac{1}{N}\theta^\ep(x_i-x_j)\big[F(\zeta^{-ij})-F(\zeta)\big].  
\end{align*} 
For every $N$ and $T$, denote the process of joint empirical measure of positions and velocities of active particles
\begin{align}\label{empirical}
\mu_t^N(dx,dv):=\frac{1}{N}\sum_{i\in\cN(t)}\delta_{\left(x_i^N(t),\, v_i^N(t)\right)}(dx, dv), \quad t\in[0,T].
\end{align}
For each $t$, $\mu_t^N(dx,dv)$ is a subprobability measure on $\R^{2d}$. Denote by $\cM_{+,1}(\R^{2d})$ the space of subprobability measures on $\R^{2d}$ endowed with the weak topology, and by 
\begin{align}\label{skorohod}
\cD_T(\cM_{+,1}):=\cD\big([0,T];\cM_{+,1}(\R^{2d})\big)
\end{align}
the space of c\`adl\`ag functions on $[0,T]$ taking values in $\cM_{+,1}(\R^{2d})$ endowed with the Skorohod topology \cite[Chapter 3]{EK}. Our goal is to prove that the $N\to\infty$ limit of \eqref{empirical} is given by the unique weak solution (see Definition \ref{def:weak}) of the following degenerate elliptic partial differential equation (\abbr{PDE})
\begin{align}\label{pde}
\begin{cases}
\partial_t f(t,x,v)& =- v\cdot\nabla_x f + \frac{1}{2}\Delta_v f - 2\int_{\R^d} f(t,x,v) f(t,x,v') dv'\\
f|_{t=0}& = f_0(x,v)
\end{cases}
, \quad (t,x,v)\in[0,T]\times\R^d\times\R^d.
\end{align}

\begin{theorem}\label{thm:main}
Under Conditions \ref{cond:ini} and \ref{cond:local}, for any finite $T$ and $d\ge1$, the sequence of empirical measures $\{\mu^N_t:t\in[0,T]\}_{N\in\N}$ converges in probability in the space $\cD_T(\cM_{+,1})$ towards a limit $\{\ovl{\mu}_t:t\in[0,T]\}$. For every $t\in[0,T]$, the measure $\ovl{\mu}_t(dx,dv)$ has a density $f(t,x,v)$ with respect to Lebesgue measure and $\{f(t,x,v):t\in[0,T],x,v\in\R^d\}$ is the unique weak solution to the \abbr{PDE} \eqref{pde} in the sense of Definition \ref{def:weak}.
\end{theorem}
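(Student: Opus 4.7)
My plan follows the classical three-step strategy (tightness, identification of limit points, uniqueness). Applying Dynkin's formula with the generator \eqref{gen} to $F(\zeta)=\frac{1}{N}\sum_{i\in\cN(\zeta)}\phi(x_i,v_i)$ for $\phi\in C_c^\infty(\R^{2d})$ yields
\begin{align*}
\langle \mu_t^N, \phi\rangle &= \langle \mu_0^N, \phi\rangle + \int_0^t \langle \mu_s^N, v\cdot\nabla_x\phi + \tfrac{1}{2}\Delta_v\phi\rangle\, ds\\
&\quad - \frac{2}{N^2}\sum_{i\ne j}\int_0^t \theta^\ep\big(x_i^N(s)-x_j^N(s)\big)\phi\big(x_i^N(s),v_i^N(s)\big)\, ds + M_t^N(\phi),
\end{align*}
with $M_t^N(\phi)$ a martingale whose quadratic variation (from the Brownian motions and the Poissonian annihilations) is $O(1/N)$. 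The transport-diffusion drift will pass trivially to the expected kinetic operator, so the entire difficulty lies in controlling the pair-interaction term and showing that it converges in the limit $N\to\infty$ to $2\int_0^t\!\int \phi(x,v) f(s,x,v)\big(\!\int f(s,x,v')\,dv'\big)\,dx\,dv\,ds$ under Condition \ref{cond:local}.

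The technical heart is an a priori pair bound
\begin{align*}
\sup_{N\in\N}\, \E\left[\frac{1}{N^2}\sum_{i\ne j}\int_0^T \theta^\ep\big(x_i^N(s)-x_j^N(s)\big)\,ds\right]<\infty,
\end{align*}
which I would establish by the It\^o--Tanaka trick specialized to the kinetic Kolmogorov operator $\cK=v\cdot\nabla_x-\tfrac{1}{2}\Delta_v$. On the two-body relative configuration $(x_i-x_j,v_i-v_j)$, the free dynamics is driven by a hypoelliptic operator whose fundamental solution obeys the classical anisotropic Gaussian bounds. I would solve $\cK\psi^\ep=\theta^\ep$ (localized by a cutoff in the relative velocity) and apply It\^o's formula to $\psi^\ep(x_i^N(t)-x_j^N(t),v_i^N(t)-v_j^N(t))$ summed over pairs: this trades the time integral of $\theta^\ep$ for a boundary term controlled by $\|\psi^\ep\|_{L^\infty}$ plus a martingale with bracket bounded in terms of $\|\nabla_v\psi^\ep\|_{L^\infty}^2$. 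The hypoelliptic Green's function estimates are precisely what is needed to keep both of these norms bounded as $\ep\downarrow 0$ on the critical scale $\ep^{-d}\lesssim N$.

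Armed with this pair bound, tightness of $\{\mu^N\}$ in $\cD_T(\cM_{+,1})$ follows from Jakubowski's criterion (mass bounded by $1$) together with an Aldous-type modulus-of-continuity estimate read off directly from the semimartingale decomposition. To identify a subsequential weak limit $\ovl\mu$, I would first upgrade weak convergence to absolute continuity with an $L^\infty$ density $f$ by convolving $\mu_s^N$ against $\theta^\ep$ and reusing the Green's-function machinery to propagate the upper bound $\|f_0\|_\infty\le\Gamma$ (noting that annihilation only decreases mass pointwise). Once $\ovl\mu_t=f(t,\cdot)\,dx\,dv$ with $f\in L^\infty$, a standard mollification argument identifies the limit of the pair-interaction integral with $2f(s,x,v)\!\int\!f(s,x,v')dv'$, the martingale vanishes in probability, and any such limit point satisfies \eqref{pde} in the sense of Definition \ref{def:weak}.

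The final step is uniqueness of weak solutions to \eqref{pde} within the class of bounded densities with compactly supported initial data, which, combined with identification, upgrades subsequential convergence to convergence in probability in $\cD_T(\cM_{+,1})$. I expect the principal obstacle to be the construction of $\psi^\ep$ on the two-body relative space with uniform $L^\infty$ and $\nabla_v$-Lipschitz bounds: the pronounced anisotropy of the hypoelliptic Green's function and the marginal local-interaction scaling $\ep^{-d}=N$ leave no slack to absorb suboptimal constants, so this is precisely the place where the combination of Green's function estimates and It\^o--Tanaka advertised in the abstract must do the heavy lifting.
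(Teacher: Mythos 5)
Your plan misassigns the role of the It\^o--Tanaka trick, and that misassignment leaves a genuine gap in the identification step. The a priori pair bound $\E\int_0^T\frac{1}{N^2}\sum_{i\neq j}\theta^\ep(x_i^N-x_j^N)\,dt\le\tfrac12$ is established in the paper (inequality \eqref{total-card}) by an elementary argument: apply It\^o's formula to the cardinality $\cN(t)$ itself. No Green's function or auxiliary Kolmogorov equation is needed there, so your "technical heart" is in the wrong place.

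The actual obstacle, which your plan glosses over in the sentence ``a standard mollification argument identifies the limit of the pair-interaction integral,'' is that weak convergence of $\mu^N$ together with an $L^\infty$ bound on the limit density does \emph{not} imply that $\frac{1}{N^2}\sum_{i\neq j}\theta^\ep(x_i^N-x_j^N)\phi(x_i^N,v_i^N)$ converges to $\int\phi\,f\,(\int f\,dv')$, precisely because $\theta^\ep$ is collapsing to a Dirac on the same scale ($\ep^{-d}\lesssim N$) on which the empirical measure lives as a sum of atoms. One needs a local-averaging (``local equilibrium'') input: the two-particle pair correlation at scale $\ep$ must decouple. In the paper, this is captured by the shift-continuity statement Proposition \ref{ppn:shift} -- that replacing $\theta^\ep(x_i-x_j)$ by $\theta^\ep(x_i-x_j+z)$ is negligible in the iterated limit $N\to\infty$, then $|z|\to0$ -- and it is \emph{this} proposition that is proved via the It\^o--Tanaka trick, i.e.\ by solving $v\cdot\nabla_x U^\ep+\Delta_v U^\ep=-\theta^\ep(x)\chi^K(v)$ and applying It\^o's formula to $\sum_{i\neq j}H^{\ep,z}(x_i^N-x_j^N,v_i^N-v_j^N)\phi\psi$. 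From Proposition \ref{ppn:shift} the paper derives Proposition \ref{ppn:conv-nonlin}, which replaces the singular $\theta^\ep$-sum by a $\delta$-mollified bilinear functional of $\mu^N\otimes\mu^N$ that \emph{does} pass to the weak limit. Without a statement of this type, the identification step in your plan would fail. (A secondary, harmless, difference: the paper obtains the $L^\infty$ bound on the limit density by domination of the annihilating system by the independent free system \eqref{free} and Varadarajan's theorem, rather than by re-running the Green's function machinery as you propose.)
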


\begin{remark}
Our setting \eqref{dynamics} of a second-order system is almost the simplest possible. The motivation and challenge is to handle the local interaction. One may notice that, if substituting the velocity into the first equation \eqref{dynamics}, then we get a closed expression for the dynamics of the position
\begin{align}\label{integrated}
x_i^N(t) = x_i(0)+v_i(0)t + \int_0^t B_i(s)ds. 
\end{align}
One may ask why not study directly a first-order system of $N$ locally interacting particles with dynamics given by \eqref{integrated}, especially if one is mainly interested in the limiting density of positions alone
\begin{align*}
\rho(t,x): = \int_{\R^d}f(t,x,v)dv.
\end{align*}
However, it is not hard to check that $t\mapsto\int_0^tB(s)ds$ is not a Markov process, hence working directly with the first-order dynamics is not necessarily easier. Consistent with that, if we integrate the \abbr{PDE} \eqref{pde} formally in variable $v$, we do not obtain a closed equation for $\rho(t,x)$. It remains unclear what (if any) equation $\rho$ should satisfy. 
\end{remark}


The rest of the article is organized as follows. In Section \ref{sec:emp}, we derive an identity satisfied by the empirical measure, and identity the nonlinear term whose convergence is the main issue. For this purpose, in Section \ref{sec:ito} we set up the It\^o-Tanaka trick which involves the solution of an auxiliary \abbr{PDE} with a degenerate Kolmogorov (or Fokker-Planck) operator, see that section for more on the meaning of this manouvre. It turns out that key to our proof are pointwise estimates of the Green's function associated with this operator, which are recalled and developed in Section \ref{sec:hypoe}. In Section \ref{sec:control}, we apply these estimates to show that various terms coming from the It\^o-Tanaka trick are negligible. In Section \ref{sec:tight}, we check the tightness of the laws of our particle system, which together with previous sections completes the proof of subsequential convergence. In the last Section \ref{sec:unique}, we prove the uniqueness of the limit \abbr{PDE}. The techniques used here, which are very influenced by the paper of Hammond-Rezakhanlou \cite{HR} on particle approximation of the Smoluchowski coagulation equation, share elements with the author's previous works \cite{FH, FH2}, hence certain arguments are made more succinct. 

\section{Identity from the empirical measure}\label{sec:emp}
Fixing $T<\infty$ and any $\phi_t(x,v)=\phi(t,x,v)\in C_c^\infty ([0,T]\times\R^{2d})$, we consider the process
\begin{align}\label{sum-emp}
F_1(t, \zeta(t)):=\langle \phi_t, \mu_t^N\rangle = \frac{1}{N}\sum_{i\in\cN(t)}\phi\big(t, x_i^N(t),v_i^N(t)\big), \quad t\ge0,
\end{align}
where $\langle f, \mu\rangle := \int_{\R^{2d}} f(x,v) \mu(dx,dv)$. Applying It\^o's formula to \eqref{sum-emp}, we obtain
\begin{equation}\label{eq-for-measure}
\begin{aligned}
\langle \phi_T, \mu_T^N\rangle&=F_1(T,\zeta(T)  =F_1(0,\zeta(0)+\int_0^T\left(\partial_t + \cL_N^D+\cL_N^J\right)F_1(t,\zeta(t))dt+
M_T^{N,\phi,D}+M_T^{N,\phi,J}\\
&= \langle \phi_0, \mu_0^N\rangle + \int_0^T \left\langle \partial_t\phi+v\cdot \nabla_x\phi +\frac{1}{2} \Delta_v\phi,\, \mu_t^N\right\rangle dt\\
& -\int_0^T\frac{1}{N^2}\sum_{i\in\cN(t)}\sum_{j\in\cN(t):j\neq i}\theta^\ep\big(x_i^N(t)-x_j^N(t)\big)\left[\phi\big(t, x_i^N(t),v_i^N(t)\big)+\phi\big(t, x_j^N(t),v_j^N(t)\big)\right] \, dt \\
&+ M_T^{N,\phi,D}+M_T^{N,\phi,J}.
\end{aligned}
\end{equation}
More precisely, \eqref{sum-emp} is a time-dependent functional of the particle configuration $\zeta(t)$, hence we may act the Markov generator \eqref{gen} on \eqref{sum-emp}. The diffusion part of the generator acts individually on each $\phi\left(t, x_i^N(t), v_i^N(t)\right)$  as
\begin{align*}
\cL_N^D\phi\left(t, x_i^N(t), v_i^N(t)\right)=\left(v_i^N(t)\cdot \nabla_x +\frac{1}{2} \Delta_v\right)\phi\big(t, x_i^N(t),v_i^N(t)\big),
\end{align*}
hence
\begin{align*}
\left(\partial_t+\cL_N^D\right)F_1(t,\zeta(t))&=\frac{1}{N}\sum_{i\in\cN(t)}\left(\partial_t+v_i^N(t)\cdot \nabla_x +\frac{1}{2} \Delta_v\right)\phi\big(t, x_i^N(t),v_i^N(t)\big)\\
&=\left\langle \partial_t\phi+v\cdot \nabla_x\phi +v\cdot \nabla_x\phi +\frac{1}{2} \Delta_v\phi,\, \mu_t^N\right\rangle .
\end{align*}
Turning to the jump part of the generator, for each distinct pair $i\neq j \in\cN(t)$, we know that at an exponential rate 
\begin{align}\label{pair-rate}
q_{ij}^N(t):=\frac{1}{N}\theta^\ep\big(x_i^N(t)-x_j^N(t)\big),
\end{align}
they annihilate and hence the sum \eqref{sum-emp} decreases by the amount
\[
F_1(t, \zeta^{-ij}(t))-F_1(t, \zeta(t))= -N^{-1}\big[\phi\big(t, x_i^N(t),v_i^N(t)\big)+\phi\big(t, x_j^N(t),v_j^N(t)\big)\big]. 
\]
Therefore, 
\[
\cL_N^JF_1(t, \zeta(t))=-\sum_{i\in\cN(t)}\sum_{j:\, j\neq i}\frac{1}{N}\theta^\ep\big(x_i^N(t)-x_j^N(t)\big)\frac{1}{N}\big[\phi\big(t, x_i^N(t),v_i^N(t)\big)+\phi\big(t, x_j^N(t),v_j^N(t)\big)\big].
\]
(From now on, for the ease of notation, all sums are only over active particles hence we sometimes drop $\cN(t)$.)
Any inactive particle is assigned a cemetery state $\emptyset$; it stays put and does not contribute to the dynamics. We note in passing that even though $v\in\R^d$ is an unbounded variable, $v\cdot \nabla_x\phi$ is bounded continuous since here $v$ is in the compact support of $\phi$, thus in the first line of \eqref{eq-for-measure}, we have the form of $\mu_t^N$ integrated against a bounded continuous test function.

To compensate for the action of the generator, $M_t^{N,\phi,D}, M_t^{N,\phi,J}$ are two martingales due to diffusion and jump (i.e. annhilation), respectively. We can bound the martingales via quadratic variation: indeed,
\begin{align}\label{mg-vanish-1}
\E\big[|M_T^{N,\phi,D}|^2\big]&=\E\Big[\big|\frac{1}{N}\int_0^T\sum_{i\in\cN(t)}\nabla_{v}\phi\big(t, x_i^N(t),v_i^N(t)\big)\cdot dB_i(t)\big|^2\Big]\nonumber\\
&=\frac{1}{N^2}\E\int_0^T\sum_{i\in\cN(t)}\big|\nabla_{v}\phi\big(t, x_i^N(t),v_i^N(t)\big)\big|^2dt\nonumber\\
&\le \frac{\|\phi\|_{C^1}^2 }{N^2}\E\int_0^T\cN(t)dt\le \frac{\|\phi\|_{C^1}^2T}{N},
\end{align}
and by \cite[Proposition 8.7]{DN} (the quantity $\alpha$ therein, so-called carr\'e-du-champ, is the sum over all possible jumps of the particle configuration $\zeta(t)$, of the squared change of the value of the functional $F_1(t, \zeta(t))$, weighted by the rate $q_{ij}^N(t)$ of such a change), see also \cite[Appendix 1, Lemma 5.1]{KL},
\begin{align*}
\E\big[|M_T^{N,\phi,J}|^2\big]&\le \E\int_0^T\sum_{i\in\cN(t)}\sum_{j:j\neq i} q_{ij}^N(t)\left[F_1(t, \zeta^{-ij}(t))-F_1(t, \zeta(t))\right]^2dt\\
&\le\E\int_0^T \sum_{i\in\cN(t)}\sum_{j:j\neq i}\frac{1}{N}\theta^\ep\big(x_i^N(t)-x_j^N(t)\big)\left[\frac{1}{N}\phi\big(t, x_i^N(t),v_i^N(t)\big)+\frac{1}{N}\phi\big(t, x_j^N(t),v_j^N(t)\big)\right]^2dt\\
&=\frac{\|\phi\|_{L^\infty}^2}{N^3}\E\int_0^T\sum_{i\in\cN(t)}\sum_{j:j\neq i}\theta^\ep\big(x_i^N(t)-x_j^N(t)\big)dt.
\end{align*}
To bound the remaining quantity, applying It\^o's formula to $\cN(t)$ and then taking expectation, we obtain (cf. \cite[Proof of Lemma 3.1]{HR} for a similar context)
\begin{align*}
\E\cN(T)=\E\cN(0)-\E\int_0^T\sum_{i\in\cN(t)}\sum_{j:j\neq i}\frac{2}{N}\theta^\ep\big(x_i^N(t)-x_j^N(t)\big)dt
\end{align*}
hence we see that 
\begin{align}\label{total-card}
\E\int_0^T\sum_{i\in\cN(t)}\sum_{j:j\neq i}\theta^\ep\big(x_i^N(t)-x_j^N(t)\big)dt\le \frac{N^2}{2},
\end{align}
whereby
\begin{align}\label{mg-vanish-2}
\E\big[|M_T^{\phi,J}|^2\big]\le \frac{\|\phi\|_{L^\infty}^2}{2N}.
\end{align}
To handle the nonlinearity, we first fix another test function $\psi(x,v)\in C_c^\infty(\R^{2d})$ and consider 
\begin{align}\label{nonlinear}
\int_0^T\frac{1}{N^2}\sum_{i\in\cN(t)}\sum_{j:j\neq i}\theta^\ep\big(x_i^N(t)-x_j^N(t)\big)\phi\big(t, x_i^N(t),v_i^N(t)\big) \psi\big(x_j^N(t),v_j^N(t)\big)\, dt.
\end{align}
Let us denote by $K=K^{\phi,\psi}$ the maximal radius of the compact supports of $\phi$ and $\psi$, and by
\begin{align}\label{compact}
\chi^K(\xi):\R^d\to[0,1] 
\end{align}
a compactly-supported function of class $C^\infty$ that is identically $1$ for $|\xi|\le 2K$ and $0$ for $|\xi|\ge 4K$. Then, \eqref{nonlinear} is equal to 
\begin{align}\label{truncated}
\int_0^T\frac{1}{N^2}\sum_{i\in\cN(t)}\sum_{j:j\neq i}\theta^\ep\big(x_i^N(t)-x_j^N(t)\big)\chi^K\big(v_i^N(t)-v_j^N(t)\big)\phi\big(t, x_i^N(t),v_i^N(t)\big) \psi\big(x_j^N(t),v_j^N(t)\big)\, dt
\end{align}
Indeed, since $v_i^N(t), v_j^N(t)$ are in the arguments of $\phi,\psi$, their moduli are less than $K$, whereby $|v_i^N(t)- v_j^N(t)|\le 2K$ otherwise one of $\phi,\psi$ is zero. Hence, adding the function $\chi^K$ to \eqref{nonlinear} does not change its value. 

\medskip
In the next two sections we concentrate on proving
\begin{proposition}\label{ppn:shift}
For any finite $T$ and $d\ge1$,
\begin{align*}
\lim_{|z|\to0}\limsup_{N\to\infty}\E\Big|\int_0^T\frac{1}{N^2}\sum_{i\in\cN(t)}\sum_{j: j\neq i}&\big[\theta^\ep\big(x_i^N(t)-x_j^N(t)+z\big)-\theta^\ep\big(x_i^N(t)-x_j^N(t)\big)\big]\\
&\cdot\chi^K\big(v_i^N(t)-v_j^N(t)\big)\phi\big(t, x_i^N(t),v_i^N(t)\big) \psi\big(x_j^N(t),v_j^N(t)\big)\, dt\Big|=0.
\end{align*}
\end{proposition}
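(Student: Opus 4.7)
The strategy is the It\^o-Tanaka trick set up in Section \ref{sec:ito}: I would remove the singular factor $\theta^\ep(\cdot+z)-\theta^\ep(\cdot)$ by writing it as the image under the degenerate Kolmogorov operator governing the relative motion of a pair, of an auxiliary function $U^\ep_z$ whose sup-norm vanishes as $|z|\to 0$, uniformly in $\ep\in(0,1]$. Introducing the pair variables $X_{ij}(t):=x_i^N(t)-x_j^N(t)$ and $V_{ij}(t):=v_i^N(t)-v_j^N(t)$, the action of $\cL_N^D$ on a function of $(X_{ij},V_{ij})$ alone reduces to
\begin{align*}
\cK := V\cdot\nabla_X+\Delta_V,
\end{align*}
where the unit coefficient on $\Delta_V$ reflects that $V_{ij}$ is driven by $B_i-B_j$. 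I would then define $U^\ep_z:=G*[\theta^\ep(\cdot+z)-\theta^\ep(\cdot)]$, where $G$ is the Green's function of $\cK$ recalled in Section \ref{sec:hypoe}, so that by construction $\cK U^\ep_z=\theta^\ep(\cdot+z)-\theta^\ep(\cdot)$.

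Next I would apply It\^o's formula, exactly as in Section \ref{sec:emp}, to the time-dependent functional
\begin{align*}
F_2(t,\zeta(t)) := \frac{1}{N^2}\sum_{i\ne j\in\cN(t)} U^\ep_z\big(X_{ij}(t),V_{ij}(t)\big)\,\chi^K\big(V_{ij}(t)\big)\,\phi\big(t,x_i^N(t),v_i^N(t)\big)\,\psi\big(x_j^N(t),v_j^N(t)\big).
\end{align*}
By the Leibniz rule for $\cK$ together with the identity $\cK U^\ep_z=\theta^\ep(\cdot+z)-\theta^\ep(\cdot)$, the drift $(\partial_t+\cL_N^D)F_2$ produces the integrand of Proposition \ref{ppn:shift} as its principal term, plus explicit errors of three types: (i) commutators pairing $U^\ep_z$ or $\nabla_V U^\ep_z$ with derivatives of the smooth cutoffs $\chi^K,\phi,\psi$; (ii) a term $U^\ep_z\,\partial_t\phi$; and (iii) the jump contribution $\cL_N^J F_2$ from annihilation, which is dominated by $\|U^\ep_z\|_\infty\,\|\chi^K\phi\psi\|_\infty$ times the total annihilation rate, and hence controlled using the a priori bound \eqref{total-card}. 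The boundary terms $F_2(T,\zeta(T))-F_2(0,\zeta(0))$ are bounded by $C\|U^\ep_z\|_\infty$ uniformly in $N$, while the $L^2$ norms of the diffusion and jump martingales are bounded by $C(\|U^\ep_z\|_\infty+\|\nabla_V U^\ep_z\|_\infty)^2/N$ via variance computations analogous to \eqref{mg-vanish-1}--\eqref{mg-vanish-2}.

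Consequently, the entire argument reduces to the uniform-in-$\ep$ shift continuity
\begin{align*}
\lim_{|z|\to 0}\,\sup_{\ep\in(0,1]}\big(\|U^\ep_z\|_{L^\infty(\R^{2d})}+\|\nabla_V U^\ep_z\|_{L^\infty(\R^{2d})}\big)=0,
\end{align*}
which is precisely what the Green's function estimates of Section \ref{sec:hypoe} are designed to deliver. This is the main obstacle: because $G$ is the fundamental solution of a degenerate hypoelliptic operator, it is anisotropic and its integrability is governed by the nonstandard scaling $(X,V)\mapsto(\lambda^3 X,\lambda V)$, so controlling both the sup-norm and the $X$-modulus of continuity of $G*\theta^\ep$ uniformly as $\ep\to 0$ requires delicate use of the pointwise kernel bounds, together with the symmetry $\theta(-x)=\theta(x)$ to exploit cancellation. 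The velocity truncation $\chi^K(V_{ij})$, inserted already in \eqref{truncated}, is what keeps these estimates tractable, since it confines the relevant convolution to the bounded velocity range $|V|\le 4K$.
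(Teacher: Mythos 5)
Your overall architecture (auxiliary Kolmogorov equation, It\^o-Tanaka applied to the pair sum, absorbing the second-order terms into the source) coincides with the paper's, and the bookkeeping of drift/jump/martingale error terms is also along the same lines. However, your reduction fails at the final and decisive step: the claim
\begin{align*}
\lim_{|z|\to 0}\,\sup_{\ep\in(0,1]}\big(\|U^\ep_z\|_{L^\infty(\R^{2d})}+\|\nabla_V U^\ep_z\|_{L^\infty(\R^{2d})}\big)=0
\end{align*}
is false. As $\ep\to0$, $U^\ep(x,v)$ tends to the partial $w$-integral $\int G\big((0,v),(-x,w)\big)\chi^K(w)\,dw$ (using translation invariance \eqref{transl}), and this diverges as $x\to0$ — logarithmically for $d=1$, like $|x|^{1-d}$ for $d\ge2$ — because $G$ has a non-integrable singularity on the diagonal when you integrate only in the $w$-slice. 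Consistent with this, the paper's crude bound (Lemma \ref{crude}) gives $\|U^\ep\|_{L^\infty}\le C\ep^{-d}$ and nothing better. For fixed $z\ne0$ and $\ep\ll|z|$ the two terms in $U^\ep(\cdot+z,\cdot)-U^\ep(\cdot,\cdot)$ have well-separated singular loci, so there is no cancellation near either one, and $\sup_{\ep\in(0,1]}\|U^\ep_z\|_{L^\infty}=\infty$ for every fixed $z$. Your plan thus reduces the proposition to a statement that does not hold, and the symmetry of $\theta$ cannot save it.

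What the paper actually proves is a weaker, integrated statement. Each negligible term is an expectation, so it can be written as the space–time integral of $|H^{\ep,z}|$ (or $|\nabla_v H^{\ep,z}|$) against the joint density $\mathsf p^{12}(t,x,v)$ of a pair of free particles, which Lemma \ref{lem:den} bounds uniformly by $\Gamma$. Then, using \eqref{transl} and the evenness of $\theta$ to convert the spatial shift of $\theta^\ep$ into a shift of $G$ in its second argument \eqref{change}, and sending $\ep\to 0$ first, the whole problem collapses to the $L^1$-continuity under translation of the truncated Green's function,
\begin{align*}
\lim_{|z|\to0}\iiint_{\R^{3d}}\big|G\big((0,v),(\wt y,w)\big)-G\big((0,v),(\wt y-z,w)\big)\big|\,\chi^K(w)\chi^K(-\wt y)\chi^K(v)\,d\wt y\,dw\,dv=0,
\end{align*}
established by Vitali's theorem with the local $L^{1+\eta}$ bound of Proposition \ref{ppn:unif-int}. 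Note also a structural difference: the paper puts $\chi^K(v)$ inside the source of \eqref{ito-tanaka} rather than multiplying externally by $\chi^K(V_{ij})$ as you do — both to keep $U^\ep$ well-defined (the Green's function is not integrable at infinity, so the $w$-integral needs the compact cutoff) and to avoid an additional commutator error from $\Delta_V$ hitting the product $U^\ep_z\cdot\chi^K$. To repair your argument you would need to abandon sup-norm control of $U^\ep_z$ and instead integrate it against the bounded particle density, as the paper does.
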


This continuity claim should be seen as a form of local equilibrium (cf. \cite{KL}) for our continuum particle system at two macroscopically separated sites (here $z$ is a macroscopic variable whereas $\ep$ is microscopic or mesoscopic). Since for $\ep$ small, the function $\theta^\ep$ is very close to a Dirac delta function (hence singular), this continuity claim is difficult to prove directly. The main bulk of subsequent work is to apply the so-called It\^o-Tanaka trick to \eqref{truncated}, so that instead of working with the function $\theta^\ep$, we work equivalently with a more regular function $U^\ep$ (more regular in the sense not for fixed $\ep$ but for $\ep$ tending to zero). Roughly speaking, it is given by a convolution of $\theta^\ep$  with the Green's function of a hypoelliptic operator, see \eqref{gr-conv}. Since the Green's function is not integrable at infinity and $\theta^\ep$ is a function of $x$ only, we seek additional localization in the variable $v$. This is the reason we introduce the function $\chi^K$, as it will be part of an auxiliary \abbr{PDE} \eqref{ito-tanaka} we set up for $U^\ep$.

\begin{remark}\label{rmk:theta}
By the assumption $\theta(0)=0$, we can have a version of Proposition \ref{ppn:shift} where the double sum is over all $i,j$ (including repeated indices $i=j$). Indeed, there are at most $N$ diagonal terms, and after bounding the test functions by constants, we would like their contributions, which is at most  
\[
\lim_{|z|\to0}\limsup_{N\to\infty}\frac{C}{N^2}N\ep^{-d}|\theta(\ep^{-1}z)-\theta(0)|,
\]
to be zero. Recall that at the local scaling, $\ep^{-d}=N$ and if $\theta(0)>0$ the above quantity is not negligible. If $\theta(0)=0$, then since $\ep=\ep(N)\to0$ first and $\theta$ has compact support, we have $\theta(\ep^{-1}z)\to0$ as $\ep\to0$ for any fixed $z$. The above quantity is thus $0$.
\end{remark}

\medskip
Given Proposition \ref{ppn:shift}, one can argue as in \cite[pp. 42-43]{HR} or \cite[pp. 631-633]{FH} to derive the following Proposition \ref{ppn:conv-nonlin}, that is key to showing the convergence of the nonlinearity. Let $\eta(x):\R^d\to\R_+$ be given, nonnegative, of class $C^\infty$ with $\int\eta dx=1$, $\eta(x)=\eta(-x)$ and compactly supported in $\B_E(0,1)$. Then, for any $\delta\in(0,1]$, denote by 
\begin{align*}
\eta^\delta(x):=\delta^{-d}\eta(\delta^{-1}x).
\end{align*}
\begin{proposition}\label{ppn:conv-nonlin}
For any finite $T$ and $d\ge1$,
\begin{align*}
\lim_{\delta\to0}\limsup_{N\to\infty}&\, \E\Big|\int_0^T\frac{1}{N^2}\sum_{i\in\cN(t)}\sum_{j\in\cN(t):j\neq i}\theta^\ep\big(x_i^N(t)-x_j^N(t)\big)
\phi\big(t, x_i^N(t),v_i^N(t)\big) \psi\big(x_j^N(t),v_j^N(t)\big)\, dt\\
&-\int_0^T\int_{\R^{d}}dw\left\langle\eta^\delta(w-x_1)\eta^\delta(w-x_2)\phi(t, w,v_1)\psi(w,v_2), \;\mu_t^N(dx_1,dv_1)\mu_t^N(dx_2,dv_2)\right\rangle dt
\Big|=0.
\end{align*}
\end{proposition}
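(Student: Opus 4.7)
The plan is to deduce Proposition \ref{ppn:conv-nonlin} from Proposition \ref{ppn:shift} by averaging the shift $z$ against the smooth symmetric kernel
\[
K^\delta(z) := (\eta^\delta * \eta^\delta)(z),
\]
and then matching the resulting averaged expression with the right-hand side through a Taylor expansion in the $w$-integral.

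First I would apply Proposition \ref{ppn:shift} pointwise in $z$ and integrate against $K^\delta(z)\,dz$. Since $K^\delta$ is supported in $\B_E(0,2\delta)$ and the integrand is dominated in $\E$-norm by a constant multiple of $\tfrac{1}{N^2}\sum\theta^\ep$, whose time-expectation is at most $1/2$ by \eqref{total-card}, the reverse Fatou lemma applies; together with the identity $\chi^K\phi\psi=\phi\psi$ noted around \eqref{compact} and Fubini, this yields
\[
\lim_{\delta\to 0}\limsup_{N\to\infty}\E\Big|\int_0^T\tfrac{1}{N^2}\sum_{i\ne j}\bigl[(K^\delta*\theta^\ep)(x_i^N(t)-x_j^N(t))-\theta^\ep(x_i^N(t)-x_j^N(t))\bigr]\phi\,\psi\,dt\Big|=0.
\]
Next I would replace $K^\delta*\theta^\ep$ by $K^\delta$ itself: Taylor-expanding $K^\delta(u-y)$ in $y$ and using $\theta^\ep(-y)=\theta^\ep(y)$ to kill the first-order term gives the pointwise bound $|(K^\delta*\theta^\ep)(u)-K^\delta(u)|\le C\ep^2\|D^2 K^\delta\|_\infty=O(\ep^2\delta^{-d-2})$, whose sum over at most $N^2$ pairs is $O(\ep^2\delta^{-d-2})$ and vanishes as $N\to\infty$ for each fixed $\delta$ under Condition \ref{cond:local}.

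On the right-hand side of Proposition \ref{ppn:conv-nonlin}, the diagonal $i=j$ contributes at most $\|\phi\|_\infty\|\psi\|_\infty\|\eta^\delta\|_{L^2}^2/N=O(\delta^{-d}/N)$, negligible for fixed $\delta$. For $i\ne j$, the change of variable $w=x_i+u$ and a Taylor expansion of $\phi(t,x_i+u,v_i)\psi(x_i+u,v_j)$ at $u=0$, followed by the replacement $\psi(x_i,v_j)=\psi(x_j,v_j)+O(|x_i-x_j|)=\psi(x_j,v_j)+O(\delta)$ valid on $\mathrm{supp}\,K^\delta(x_i-\cdot)$, identifies the $w$-integral with $K^\delta(x_i-x_j)\phi(t,x_i,v_i)\psi(x_j,v_j)+O(\delta)\,K^\delta(x_i-x_j)$. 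Hence the right-hand side equals
\[
\int_0^T\tfrac{1}{N^2}\sum_{i\ne j}K^\delta(x_i-x_j)\phi(t,x_i,v_i)\psi(x_j,v_j)\,dt\;+\;O(\delta)\,\E\!\int_0^T\tfrac{1}{N^2}\sum_{i\ne j}K^\delta(x_i-x_j)\,dt\;+\;o_N(1).
\]

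The main obstacle is controlling this last residual: the trivial bound $K^\delta\le C\delta^{-d}$ only yields $O(\delta^{1-d})$, which diverges for $d\ge2$. I would close the gap by re-running the first two steps above with $\phi,\psi$ replaced by a large smooth cutoff $\chi_R\otimes\chi_R$, with $R=R(N)$ chosen sufficiently large (using Condition \ref{cond:ini} and the Gaussian tails of $v_i(0)+B_i(t)$) so that every active particle lies in $\mathrm{supp}\,\chi_R$ throughout $[0,T]$ with probability approaching $1$. Combined with \eqref{total-card} this shows
\[
\E\!\int_0^T\tfrac{1}{N^2}\sum_{i\ne j}K^\delta(x_i^N(t)-x_j^N(t))\,dt\le C
\]
uniformly in $N,\delta$, hence the residual is genuinely $O(\delta)\to 0$. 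Combining all four steps with the ordered limits $N\to\infty$ then $\delta\to 0$ gives Proposition \ref{ppn:conv-nonlin}.
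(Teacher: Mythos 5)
Your route is genuinely different from the paper's. The paper never introduces $K^\delta := \eta^\delta * \eta^\delta$; instead it inserts $\eta^\delta(z_1)\eta^\delta(z_2)dz_1dz_2$, shifts the $x$-arguments of $\phi,\psi$ (controlling the $O(\delta)$ error through \eqref{total-card}, which keeps the $\theta^\ep$ weight in play), performs the change of variables $w_1=x_i^N-z_1$, $w_2=x_j^N-z_2$, and then shifts $w_2\to w_1$ inside $\psi$ and $\eta^\delta$ paying errors of order $\ep\delta^{-2d}$ and $\ep\delta^{-1-2d}$ that vanish because $\ep\to 0$ before $\delta\to 0$. The integral $\int\theta^\ep(w_1-w_2)dw_2=1$ then removes $\theta^\ep$ without ever losing the $L^1$ budget. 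Your version replaces $\theta^\ep$ by $K^\delta*\theta^\ep$ up front via Proposition \ref{ppn:shift} (the reverse-Fatou step is fine, once one notes that a uniform-in-$z,N$ bound on the integrand is available from the independent-system domination plus Lemma \ref{lem:den}, since \eqref{total-card} as stated is for the unshifted $\theta^\ep$), then drops $K^\delta*\theta^\ep\to K^\delta$ by symmetry of $\theta^\ep$, and Taylor-expands the $w$-integral on the other side. Up to the residual control, these steps are correct and furnish a workable alternative proof.

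The gap is exactly where you flag it, but your proposed fix is the wrong one. The residual you display, $O(\delta)\cdot\E\int_0^T\tfrac{1}{N^2}\sum_{i\ne j}K^\delta(x_i-x_j)\,dt$, should not appear in that unlocalized form. When you Taylor-expand $\phi(t,x_i+u,v_i)\psi(x_i+u,v_j)$ about $u=0$ and then replace $\psi(x_i,v_j)$ by $\psi(x_j,v_j)$, the difference is not merely $O(\delta)$: it also vanishes identically unless $v_i,v_j$ lie in the fixed $v$-supports of $\phi,\psi$ and $x_i,x_j$ lie within $O(\delta)$ of the fixed $x$-supports (since $|u|\le\delta$ and $|x_i-x_j|\le 2\delta$ on the support of $K^\delta$). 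Keeping these indicators, the residual is $O(\delta)$ times
\[
\E\int_0^T\frac{1}{N^2}\sum_{i\ne j}K^\delta\bigl(x_i^N(t)-x_j^N(t)\bigr)\,\mathbf 1_{\{|v_i^N|,|v_j^N|\le K,\ |x_i^N|,|x_j^N|\le K'\}}\,dt,
\]
and this is bounded uniformly in $N,\delta$ by dominating with the independent system \eqref{free} and invoking the uniform density bound of Lemma \ref{lem:den}: the marginal density of $(x_1(t)-x_2(t),v_1(t)-v_2(t))$ is $\le\Gamma$, so integrating $K^\delta$ (an $L^1$-normalized kernel) against it over a compact $v$-window gives a constant independent of $\delta$. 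This is precisely the mechanism already used in Proposition \ref{ppn:double-1} and Proposition \ref{ppn:triple}. No $N$-dependent cutoff is needed.

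By contrast, the $\chi_R$ with $R=R(N)\to\infty$ route you propose is problematic. Proposition \ref{ppn:shift} is proved for \emph{fixed} compactly supported $\phi,\psi$: the It\^o--Tanaka function $U^\ep$ is built with a fixed cutoff $\chi^K$ in \eqref{ito-tanaka}, and every Green's-function estimate in Sections \ref{sec:hypoe}--\ref{sec:control} carries constants depending on the compact set $\K$ (equivalently on $K$). There is no uniformity of the convergence in Proposition \ref{ppn:shift} as the support of the test function grows with $N$, so ``re-running the first two steps'' with $\chi_{R(N)}$ in place of $\phi\psi$ is not justified. Moreover, even the claim that all active particles stay in $\operatorname{supp}\chi_R$ on $[0,T]$ only holds with high, not full, probability (the positions and velocities in \eqref{free} are unbounded Gaussians), so this would introduce yet another tail event to control. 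The clean and paper-consistent way to close your argument is the localized domination above.
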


\begin{proof}
We shorten some parts of the proof as it is similar to arguments detailed in \cite{HR, FH, FH2}. 
In the sequel, we use Err$(N,\delta)$ to denote a stochastic error that can change from line to line and which vanishes in the following limit
\begin{align*}
\lim_{\delta\to0}\limsup_{N\to\infty}\E|\text{Err}(N,\delta)|=0.
\end{align*}
By the assumption $\theta(0)=0$, Proposition \ref{ppn:shift} (we can remove the function $\chi^K$ without changing the value, as explained near \eqref{truncated}) and Remark \ref{rmk:theta}, we firstly have that 
\begin{align*}
&\int_0^T\frac{1}{N^2}\sum_{i\in\cN(t)}\sum_{j:j\neq i}\theta^\ep\big(x_i^N(t)-x_j^N(t)\big)\phi\big(x_i^N(t),v_i^N(t)\big) \psi\big(x_j^N(t),v_j^N(t)\big)\, dt\\
&=\int_0^T\frac{1}{N^2}\sum_{i,j\in\cN(t)}\theta^\ep\big(x_i^N(t)-x_j^N(t)\big)\phi\big(x_i^N(t),v_i^N(t)\big) \psi\big(x_j^N(t),v_j^N(t)\big)\, dt\\
&=\int_0^T\iint_{\R^{2d}}\eta^\delta(z_1)\eta^\delta(z_2)dz_1dz_2\frac{1}{N^2}\sum_{i,j\in\cN(t)}\theta^\ep\big(x_i^N(t)-x_j^N(t)-z_1+z_2\big)
\\
&\quad\quad\quad   \cdot\phi\big(t, x_i^N(t),v_i^N(t)\big) \psi\big(x_j^N(t),v_j^N(t)\big)\, dt +\text{Err}(N,\delta).
\end{align*}
Indeed, any $z_1,z_2$ in the support of $\eta^\delta$ in the last line have modulus less than $\delta$, hence the stochastic error depends only on $N$ and $\delta$. Next, since $\phi,\psi$ are test functions, shifting their $x$-slot by $z_1,z_2$ respectively causes a stochastic error on the order of $\delta$ (here recall \eqref{total-card}), hence the preceding display is equal to
\begin{align*}
&\int_0^T\iint_{\R^{2d}}\eta^\delta(z_1)\eta^\delta(z_2)dz_1dz_2\frac{1}{N^2}\sum_{i,j\in\cN(t)}\theta^\ep\big(x_i^N(t)-x_j^N(t)-z_1+z_2\big)
\\
&\quad\quad\quad   \cdot\phi\big(t, x_i^N(t)-z_1,v_i^N(t)\big) \psi\big(x_j^N(t)-z_2,v_j^N(t)\big)\, dt +\text{Err}(N,\delta).
\end{align*}
By a change of variables $w_1=x_i^N(t)-z_1$, $w_2=x_j^N(t)-z_2$, we can rewrite the above as
\begin{align*}
&\int_0^T\iint_{\R^{2d}}\frac{1}{N^2}\sum_{i,j\in\cN(t)}\eta^\delta\left(x_i^N(t)-w_1\right)\eta^\delta\big(x_j^N(t)-w_2\big)\theta^\ep\big(w_1-w_2\big)
\\
&\quad\quad\quad  \cdot\phi\big(t, w_1,v_i^N(t)\big) \psi\big(w_2,v_j^N(t)\big)\, dw_1dw_2dt +\text{Err}(N,\delta).
\end{align*}
Since $|w_1-w_2|<\ep$ in the support of $\theta^\ep$, we shift the $w_2$ in $\psi\big(w_2,v_j^N(t)\big)$ to $w_1$ causing an error on the order of $\ep\delta^{-2d}$. Since in our claim $\ep\to0$ before $\delta\to0$, this error is negligible and can be combined into Err$(N,\delta)$. Similar, shifting the $w_2$ in the argument of $\eta^\delta\big(x_j^N(t)-w_2\big)$ to $w_1$ causes an error of order $\ep\delta^{-1-2d}$, also negligible. Thus the preceding display is equal to
\begin{align*}
&\int_0^T\iint_{\R^{2d}}\frac{1}{N^2}\sum_{i,j\in\cN(t)}\eta^\delta\left(x_i^N(t)-w_1\right)\eta^\delta\big(x_j^N(t)-w_1\big)\theta^\ep\big(w_1-w_2\big)
\\
&\quad\quad\quad \cdot\phi\big(t, w_1,v_i^N(t)\big) \psi\big(w_1,v_j^N(t)\big)\, )dw_1dw_2dt +\text{Err}(N,\delta)\\
&=\int_0^T\int_{\R^{d}}\frac{1}{N^2}\sum_{i,j\in\cN(t)}\eta^\delta\left(x_i^N(t)-w_1\right)\eta^\delta\big(x_j^N(t)-w_1\big)\phi\big(t, w_1,v_i^N(t)\big) \psi\big(w_1,v_j^N(t)\big)\, dw_1dt +\text{Err}(N,\delta)\\
&=\int_0^T\int_{\R^{d}}dw_1\left\langle\eta^\delta(x_1-w_1)\eta^\delta(x_2-w_1)\phi(t, w_1,v_1)\psi(w_1,v_2), \;\mu_t^N(dx_1,dv_1)\mu_t^N(dx_2,dv_2)\right\rangle dt
+\text{Err}(N,\delta),
\end{align*}
where one of the integral in $dw_2$ is removed using the property $\int\theta^\ep(w_1-w_2)dw_2=1$ for any $w_1$.
The last equality is a rewritting in terms of the empirical measure $\mu^N_t$. 
\end{proof}

\section{The It\^o-Tanaka trick}\label{sec:ito}
In this section, we apply the so-called It\^o-Tanaka trick to \eqref{truncated}. The trick itself is a classical one in stochastic analysis, related to Tanaka's formula in local time, see e.g. \cite[pp. 4]{FGP} for an account, nevertheless its use in a particle system context may be quite recent, which we learned from Hammond and Rezakhanlou \cite{HR}. Roughly speaking, its aim is to replace a term of low regularity equivalently by a series of other terms involving functions of better regularity, by applying inversely the It\^o's formula.

For the above-fixed test functions $\phi,\psi$, and for any fixed $z\in\R^d$, consider the process
\begin{align}\label{aux-double-sum}
\cY_t:=\frac{1}{N^2}\sum_{i\in\cN(t)}\sum_{j\in\cN(t):\,j\neq i}H^{\ep,z}\big(x_i^N(t)-x_j^N(t), v_i^N(t)-v_j^N(t)\big)\phi\big(t, x_i^N(t),v_i^N(t)\big) \psi\big(x_j^N(t),v_j^N(t)\big), \quad t\ge0
\end{align}
which is also a functional of the configuration $\zeta(t)$, 
where 
\begin{align}\label{def:H}
H^{\ep,z}(x,v):=U^\ep(x+z,v)-U^\ep(x,v)
\end{align}
and $U^{\ep}(x,v):\R^{2d}\to\R$ is a function of class $C^\infty$ to be defined in \eqref{gr-conv}. Applying It\^o's formula to $\cY_t$, we obtain that 

\begin{align}
&\cY_T-\cY_0 \label{ini-term}\\
&=\int_0^T\frac{1}{N^2}\sum_{i\in\cN(t)}\sum_{j:j\neq i}\big(v_i^N(t)-v_j^N(t)\big)\cdot\nabla_xH^{\ep,z}\big(x_i^N(t)-x_j^N(t), v_i^N(t)-v_j^N(t)\big)\phi\big(t, x_i^N(t),v_i^N(t)\big) \psi\big(x_j^N(t),v_j^N(t)\big)\, dt \label{2nd-order-1}\\
&+\int_0^T\frac{1}{N^2}\sum_{i\in\cN(t)}\sum_{j:j\neq i}\Delta_vH^{\ep,z}\big(x_i^N(t)-x_j^N(t), v_i^N(t)-v_j^N(t)\big)\phi\big(t, x_i^N(t),v_i^N(t)\big) \psi\big(x_j^N(t),v_j^N(t)\big)\, dt \label{2nd-order-2}\\
&+\int_0^t\frac{1}{N^2}\sum_{i\in\cN(s)}\sum_{j:j\neq i}\partial_t\phi\big(t, x_i^N(s),v_i^N(s)\big) \psi\big(x_j^N(s),v_j^N(s)\big)H^{\ep,z}\big(x_i^N(t)-x_j^N(t), v_i^N(t)-v_j^N(t)\big)\, ds \label{double-sum-0}\\
&+\int_0^T\frac{1}{N^2}\sum_{i\in\cN(t)}\sum_{j:j\neq i}\Big[v_i^N(t)\cdot\nabla_x\phi\big(t, x_i^N(t),v_i^N(t)\big) \psi+v_j^N(t)\cdot\nabla_x\psi\big(x_j^N(t),v_j^N(t)\big)\phi\Big]H^{\ep,z}\, dt \label{double-sum-1}\\
&+\int_0^T\frac{1}{2N^2}\sum_{i\in\cN(t)}\sum_{j:j\neq i}\Big[\Delta_v\phi\big(t, x_i^N(t),v_i^N(t)\big) \psi\big(x_j^N(t),v_j^N(t)\big)+\Delta_v\psi\big(x_j^N(t),v_j^N(t)\big)\phi\big(t, x_i^N(t),v_i^N(t)\big)\Big]H^{\ep,z}\, dt \label{double-sum-2}\\
&-\int_0^T\frac{2}{N^3}\sum_{i\in\cN(t)}\sum_{k:k\neq i}\theta^\ep\big(x_i^N(t)-x_k^N(t)\big)\sum_{j:j\neq i,k}H^{\ep,z}\big(x_i^N(t)-x_j^N(t), v_i^N(t)-v_j^N(t)\big)\phi\big(t, x_i^N(t),v_i^N(t)\big) \psi\big(x_j^N(t),v_j^N(t)\big)\, dt \label{triple-sum-1}\\
&-\int_0^T\frac{2}{N^3}\sum_{i\in\cN(t)}\sum_{k:k\neq i}\theta^\ep\big(x_i^N(t)-x_k^N(t)\big)\sum_{j:j\neq i,k}H^{\ep,z}\big(x_j^N(t)-x_i^N(t), v_j^N(t)-v_i^N(t)\big)\phi\big(t, x_j^N(t),v_j^N(t)\big) \psi\big(x_i^N(t),v_i^N(t)\big)\, dt \label{triple-sum-2}\\
&-\int_0^T\frac{2}{N^3}\sum_{i\in\cN(t)}\sum_{k:k\neq i}\theta^\ep\big(x_i^N(t)-x_k^N(t)\big)H^{\ep,z}\big(x_i^N(t)-x_k^N(t), v_i^N(t)-v_k^N(t)\big)\phi\big(t, x_i^N(t),v_i^N(t)\big) \psi\big(x_k^N(t),v_k^N(t)\big)\, dt \label{triple-sum-3}\\
&+\wt M_T^{D}+\wt M_T^J, \label{mg-terms}
\end{align}
where we omitted some arguments for brevity as they are self-evident. Specifically, in \eqref{triple-sum-1}-\eqref{triple-sum-3} we used the fact that if a pair $(i,k)$ of distinct particles coagulate at time $t$ (which occurs at rate $q_{ik}^N(t)$ \eqref{pair-rate}), the double sum \eqref{aux-double-sum} decreases by an amount of 
\begin{align*}
&\frac{1}{N^2}\sum_{j\neq i, k}H^{\ep, z}\left(x_i^N(t)-x_j^N(t), v_i^N(t)-v_j^N(t)\right)\phi\big(t, x_i^N(t),v_i^N(t)\big) \psi\big(x_j^N(t),v_j^N(t)\big)\\
&+\frac{1}{N^2}\sum_{j\neq i, k}H^{\ep, z}\left(x_k^N(t)-x_j^N(t), v_k^N(t)-v_j^N(t)\right)\phi\big(t, x_k^N(t),v_k^N(t)\big) \psi\big(x_j^N(t),v_j^N(t)\big)\\
&+\frac{1}{N^2}\sum_{j\neq i, k}H^{\ep, z}\left(x_j^N(t)-x_i^N(t), v_j^N(t)-v_i^N(t)\right)\phi\big(t, x_j^N(t),v_j^N(t)\big) \psi\big(x_i^N(t),v_i^N(t)\big)\\
&+\frac{1}{N^2}\sum_{j\neq i, k}H^{\ep, z}\left(x_j^N(t)-x_k^N(t), v_j^N(t)-v_k^N(t)\right)\phi\big(t, x_j^N(t),v_j^N(t)\big) \psi\big(x_k^N(t),v_k^N(t)\big)\\
&+\frac{1}{N^2}H^{\ep, z}\left(x_i^N(t)-x_k^N(t), v_i^N(t)-v_k^N(t)\right)\phi\big(t, x_i^N(t),v_i^N(t)\big) \psi\big(x_k^N(t),v_k^N(t)\big)\\
&+\frac{1}{N^2}H^{\ep, z}\left(x_k^N(t)-x_i^N(t), v_k^N(t)-v_i^N(t)\right)\phi\big(t, x_k^N(t),v_k^N(t)\big) \psi\big(x_i^N(t),v_i^N(t)\big).
\end{align*}
Since $\theta$ is symmetric, hence $\theta^\ep\big(x_i^N(t)-x_k^N(t)\big)=\theta^\ep\big(x_k^N(t)-x_i^N(t)\big)$, after summing over all distinct pairs $(i,k)$ of the above six terms, we can switch the indices of $i$ and $k$ and regroup them into the claimed three sums.

The martingale associated with diffusion is
\begin{align*}
\wt M_T^D:=\frac{1}{N^2}\int_0^T\sum_{i\in\cN(t)}\sum_{j:j\neq i}(\nabla_vH^{\ep,z} \phi\psi+H^{\ep,z}\psi\nabla_v\phi )\, dB_i(t)+(-\nabla_vH^{\ep,z} \phi\psi+H^{\ep,z}\phi\nabla_v\psi) \, dB_j(t),
\end{align*}
hence
\begin{align}
\frac{1}{2}\E\big[|\wt M_T^D|^2\big]\le
&\E\frac{1}{N^4}\int_0^T\sum_{i\in\cN(t)}\Big|\sum_{j:j\neq i}(\nabla_vH^{\ep,z} \phi\psi+H^{\ep,z}\psi\nabla_v\phi)\Big|^2dt \label{mg-1}\\
&+\E\frac{1}{N^4}\int_0^T\sum_{j\in\cN(t)}\Big|\sum_{i:i\neq j}(-\nabla_vH^{\ep,z} \phi\psi+H^{\ep,z}\phi\nabla_v\psi)\Big|^2dt.\label{mg-2}
\end{align}
The martingale associated with jumps can be bounded via
\begin{align}
&\frac{1}{4}\E\big[|\wt M_T^J|^2\big]\nonumber\\
&\le\E\int_0^T\sum_{i\in\cN(t)}\sum_{k:k\neq i}\frac{1}{N}\theta^\ep\big(x_i^N(t)-x_k^N(t)\big)\Big[\frac{1}{N^2}\sum_{j:j\neq i,k}H^{\ep,z}\big(x_i^N(t)-x_j^N(t), v_i^N(t)-v_j^N(t)\big)\phi \psi\Big]^2dt\label{mg-triple-1}\\
&+\E\int_0^T\sum_{i\in\cN(t)}\sum_{k:k\neq i}\frac{1}{N}\theta^\ep\big(x_i^N(t)-x_k^N(t)\big)\Big[\frac{1}{N^2}\sum_{j:j\neq i,k}H^{\ep,z}\big(x_j^N(t)-x_i^N(t), v_j^N(t)-v_i^N(t)\big)\phi \psi\Big]^2dt\label{mg-triple-2}\\
&+\E\int_0^T\sum_{i\in\cN(t)}\sum_{k:k\neq i}\frac{1}{N}\theta^\ep\big(x_i^N(t)-x_k^N(t)\big)\Big[\frac{1}{N^2}H^{\ep,z}\big(x_i^N(t)-x_k^N(t), v_i^N(t)-v_k^N(t)\big)\phi \psi\Big]^2dt,\label{mg-triple-3}
\end{align}
cf. \cite[Proposition 8.7]{DN}.

The only term with second derivatives of $H^{\ep,z}$ from the above development is \eqref{2nd-order-2}. In order to substitute the highest derivative (also together with \eqref{2nd-order-1}), we set up the following subelliptic equation for $U^\ep:\R^{2d}\to\R$:
\begin{align}\label{ito-tanaka}
v\cdot \nabla_x U^\ep(x,v)+\Delta_v U^\ep(x,v) = -\theta^\ep(x)\chi^K(v).
\end{align}
We will choose in \eqref{gr-conv} a particular solution of \eqref{ito-tanaka}, which is the convolution of the \abbr{RHS} with the subelliptic Green's function. Recall in \eqref{def:H} we have defined
\[
H^{\ep,z}(x,v):=U^\ep(x+z,v)-U^\ep(x,v).
\]
\eqref{ito-tanaka} yields, by linearity, the equation satisfied by $H^{\ep,z}:\R^{2d}\to\R$, for any fixed $z$:
\begin{align}\label{ito-tanaka-2}
v\cdot \nabla_x H^{\ep,z}(x,v)+\Delta_v H^{\ep,z}(x,v) = \big[\theta^\ep(x)-\theta^\ep(x+z)\big]\chi^K(v).
\end{align}
Consequently, we can substitute \eqref{2nd-order-1}-\eqref{2nd-order-2} as follows
\begin{align*}
\eqref{2nd-order-1}+\eqref{2nd-order-2}=&\int_0^T\frac{1}{N^2}\sum_{i\in\cN(t)}\sum_{j:j\neq i}\\
&\cdot\big[\theta^\ep\big(x_i^N(t)-x_j^N(t)\big)-\theta^\ep\big(x_i^N(t)-x_j^N(t)+z\big)\big]\chi^K\big(v_i^N(t)-v_j^N(t)\big)\phi\big(x_i^N(t),v_i^N(t)\big) \psi\big(x_j^N(t),v_j^N(t)\big).
\end{align*}
The \abbr{RHS} is precisely the expression appearing in Proposition \ref{ppn:shift}. 

We remark at this stage that $\nabla_xH^{\ep,z}$ does not appear any more in the rest of terms coming from the It\^o's formula applied to $\cY_t$, after it is absorbed into the operator on the \abbr{LHS} of \eqref{ito-tanaka-2}. Its estimate is expected to be bad due to the degeneracy of the subellitic operator in the direction of $x$. What do appear in the rest of the terms are $\nabla_vH^{\ep,z}$ and $H^{\ep,z}$ itself, whose estimates are nice enough. We will show in the next sections that all the other terms \eqref{ini-term}, \eqref{double-sum-0}-\eqref{mg-terms} (i.e. apart from \eqref{2nd-order-1}-\eqref{2nd-order-2}) are negligible in $L^1(\Omega)$ in the order of limits $N\to\infty$ followed by $|z|\to0$, as required for Proposition \ref{ppn:shift}.

We can thus summarise the use of It\^o-Tanaka trick \eqref{ito-tanaka} in our context as follows. It allows to transfer proving a continuity claim Proposition \ref{ppn:shift} involving a nearly singular function $\theta^\ep$ (for $\ep$ small), into proving similar claims involving more regular functions $\nabla_vH^{\ep,z}$ and $H^{\ep,z}$. The estimates for the latter can be equivalently obtained from estimates for $U^\ep$, which is a convolution of $\theta^\ep(x)\chi^K(v)$ with a subelliptic Green's function. This convolution is a sort of regularization. Indeed, in the limit $\ep\to0$, $U^\ep$ does not tend to a singular object, but rather to a pointwise evaluation (in $x$) of the Green's function (suitably integrated in $v$). This is the reason why pointwise estimates and gradient estimates (in $v$) of the associated Green's function are key to our analysis.

\section{The hypoelliptic Green's function}\label{sec:hypoe}
Denote the operator 
\begin{align}\label{hyp-op}
L := v\cdot\nabla_x+\Delta_v
\end{align}
acting on appropriate functions on $\R^{2d}$, and we rewrite the \abbr{PDE} \eqref{ito-tanaka} more compactly as 
\begin{align}\label{aux-pde}
LU^\ep = -\theta^\ep(x)\chi^K(v).
\end{align}
It is well-known that $L$ \eqref{hyp-op} is an hypoelliptic operator in the sense of H\"ormander \cite{LH}, and its parabolic version (i.e. $\partial_t-L$) was earlier studied by Kolmogorov \cite{Ko} who wrote down its kernel explicitly. cf. \cite{AP} for a recent survey. Crucial for our purpose are pointwise estimates for the Green's function of $-L$.

To briefly recall that $L$ is hypoelliptic, let us write 
\begin{align}\label{degen-op}
L=A_0+\sum_{i=1}^d A_i^2
\end{align}
where 
\begin{align*}
A_i = \partial_{v_i}, \quad i=1,...,d; \quad A_0=\sum_{j=1}^dv_j\partial_{x_j}.
\end{align*}
Since the commutators
\begin{align*}
[A_0,A_i]=\sum_{j=1}^dv_j\partial_{x_j}(\partial_{v_i})-\partial_{v_i}(\sum_{j=1}^dv_j\partial_{x_j})=-\partial_{x_i}, \quad i=1,...,d
\end{align*}
we see that at every point in $\R^{2d}$,
\begin{align*}
\text{span}\big\{A_i, [A_0,A_i], \; i=1,...,d\big\}=\R^{2d}.
\end{align*}
Thus $L$ satisfies H\"ormander's condition \cite[Theorem 1.1]{LH}, \cite[pp. 128]{Nua}, implying that it is an hypoelliptic operator; that is, if $u$ is a distribution such that $Lu$ is $C^\infty$, then $u$ must be $C^\infty$ as well, cf. \cite[pp. 147]{LH}, \cite[pp. 129]{Nua}. 
Since the \abbr{RHS} of \eqref{aux-pde} is of class $C^\infty$ with compact support (for every fixed $\ep>0$), we can find a solution of the form 
\begin{align}\label{gr-conv}
U^\ep(x,v) = \iint_{\R^{2d}}G\big((x,v),(y,w)\big)\theta^\ep(y)\chi^K(w)dydw,
\end{align}
where $G=G_L$ is a distribution of class $C^\infty$ away from the diagonal, called the Green's function of $-L$, see e.g. \cite[pp. 143]{SC}, \cite[pp. 247]{FSC}. The definition of hypoellipticity implies that $U^\ep$ is of class $C^\infty$ for every fixed $\ep$; in particular, this is the function that is used in Section \ref{sec:ito} to define $\cY_t$.

To state the classical results of \cite{NSW,SC, FSC} on the estimates for the Green's function of general hypoelliptic operators, we recall that they are expressed in terms of a control metric (or Carnot-Carath\'eodory metric) $\rho_L(X,Y)$ canonically attached to $L$ (to avoid confusion, we use capital letters $X=(x,v),Y=(y,w),...$ to denote variables in $\R^{2d}$). We also set here
\begin{align*}
D:=2d.
\end{align*}
Let (in our case) $q=1+2d$ and $\{V_j,j=1,...,q\}$ are an enumeration of 
\begin{align*}
V_1:=A_0, \; V_{i+1}:=A_i, \; V_{i+d+1}:= [A_0,A_i], \quad i=1,...,d.
\end{align*}
As explained in \cite[pp.107]{NSW}, \cite[Section 18]{RS}, for operators of the form \eqref{degen-op} which are not just a sum-of-squares but with $A_0$, the degree of $A_0$ counts $2$ while the degree of each $A_i$ counts $1$, and likewise for their commutators. Hence, in our case we have that 
\begin{align*}
d_1=2, \; d_{i+1}=1,\; d_{i+d+1}=3, \quad i=1,...,d,
\end{align*}
where $d_j$ is the degree of $V_j$. 
\begin{definition}[{\cite[Definition 1.1]{NSW}}]\label{def:control}
Let $C(\delta)$ denote the class of absolutely continuous mappings $\phi:[0,1]\to\R^D$ which almost everywhere satisfy the differential equation
\begin{align}\label{control-velocity}
\phi'(t) = \sum_{j=1}^q a_j(t)V_j(\phi(t))
\end{align}
with $|a_j(t)|\le \delta^{d_j}$. Then define
\begin{align*}
\rho_L(X,Y):=\inf\big\{\delta>0:\;\exists \phi\in C(\delta), \phi(0)=X, \phi(1)=Y\big\}.
\end{align*}
\end{definition}
\begin{remark}
The geometric meaning of this definition is as follows. The ball of radius $\delta$ around the point $X$ in the control metric $\rho_L$ consists of those points $Y$ that can be reached at time $t=1$ by a ``particle'' (whose trajectory is denoted $\phi(t)$, $t\in[0,1]$), that starts at $t=0$ from $X$  and travels with velocity the \abbr{RHS} of \eqref{control-velocity}. The velocity at time $t$ has direction controlled by the vector fields $V_j$ evaluated at ``particle'' position, and modulus controlled together by the strength of the vector fields and the scalar multipliers $a_j(t)$.
\end{remark}
It is known that $\rho_L$ is a metric and in our case, it compares with the Euclidean distance $|\cdot|$ in the following way \cite[Proposition 1.1]{NSW}: for any compact set $\K\subset\R^D$, there are constants $C_i=C_i(\K,d)$, $i=1,2$, such that for any $X,Y\in\K$ we have that
\begin{align}\label{metric-comp}
C_1|X-Y|\le \rho_L(X,Y)\le C_2|X-Y|^{1/3}.
\end{align}
Given the control metric $\rho_L$ we define balls in this metric of radius $\delta>0$ by 
\begin{align*}
\B_L(X,\delta):=\big\{Y\in\R^D:\rho_L(X,Y)\le \delta\big\}.
\end{align*}
These balls are locally volume doubling as proved in \cite{NSW}.
\begin{theorem}[{\cite[Theorem 1, pp. 110]{NSW}}]
For every compact set $\K\subset\R^D$, there exist constants $C_1,C_2$ so that for all $X\in \K$, 
\begin{align*}
0<C_1\le \frac{|\B_L(X,\delta)|}{\Lambda(X,\delta)}\le C_2<\infty,
\end{align*}
where 
\begin{align*}
\Lambda(X,\delta):=\sum_{1\le i_1,...,i_D\le q}\big|\text{det}(V_{i_1},...,V_{i_D})(X)\big|\delta^{d_{i_1}+...+d_{i_D}}.
\end{align*}
\end{theorem}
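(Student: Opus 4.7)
The plan is to show that both the volume $|\B_L(X,\delta)|$ and the quantity $\Lambda(X,\delta)$ are, up to uniform constants, captured by pushforward volumes of boxes under exponential mappings built from the vector fields $V_1,\ldots,V_q$. For each $D$-tuple $I=(i_1,\ldots,i_D)\in\{1,\ldots,q\}^D$, define the anchored exponential map
\begin{equation*}
\Phi_I^X(t_1,\ldots,t_D):=\exp(t_D V_{i_D})\circ\cdots\circ\exp(t_1 V_{i_1})(X),
\end{equation*}
and set $Q_I(\delta):=\{t\in\R^D:|t_j|\le \delta^{d_{i_j}}\}$. A direct differentiation gives $\det D\Phi_I^X(0)=\det(V_{i_1},\ldots,V_{i_D})(X)$, so the individual summand in the definition of $\Lambda$ is exactly the infinitesimal $\delta$-box Jacobian of $\Phi_I^X$ at $t=0$.

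For the inclusion $\Phi_I^X(Q_I(\delta))\subset \B_L(X,C\delta)$, each piecewise-constant choice $(t_1,\ldots,t_D)\in Q_I(\delta)$ produces an admissible trajectory in $C(C\delta)$ in the sense of Definition \ref{def:control}, by concatenating reparametrized flows of the $V_{i_j}$ with control amplitudes $|a_{i_j}|\le \delta^{d_{i_j}}/(\text{leg length})$ rescaled back to time $[0,1]$; the weights $d_{i_j}$ are precisely what makes this rescaling consistent with the control balls. For the reverse inclusion, approximate a general admissible $\phi\in C(\delta)$ by a piecewise concatenation of flows along single $V_j$'s, using Gronwall on short subintervals to show the approximation error is absorbed into a comparable constant; this shows every point of $\B_L(X,\delta)$ lies in $\Phi_I^X(Q_I(C\delta))$ for \emph{some} $I$ in the finite index set.

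From here the two inequalities follow by change of variables. For the lower bound, pick $I^*$ achieving the maximum of $|\det(V_{i_1},\ldots,V_{i_D})(X)|\delta^{d_{i_1}+\cdots+d_{i_D}}$; the key analytic input is that, uniformly for $X$ in the compact set $\K$, $\Phi_{I^*}^X$ is a $C^\infty$ diffeomorphism on $Q_{I^*}(c\delta)$ for some small $c>0$ depending only on $\K$, with $|\det D\Phi_{I^*}^X(t)|\ge \tfrac{1}{2}|\det(V_{I^*})(X)|$ on that box. Integrating this Jacobian bound and comparing to $Q_{I^*}(c\delta)$ yields $|\B_L(X,\delta)|\gtrsim |\det V_{I^*}(X)|\delta^{d_{i_1}+\cdots+d_{i_D}}\gtrsim \Lambda(X,\delta)/q^D$. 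For the upper bound, cover $\B_L(X,\delta)$ by the finite family $\{\Phi_I^X(Q_I(C\delta))\}_I$; each piece has volume at most $\int_{Q_I(C\delta)}|\det D\Phi_I^X(t)|\,dt$, and a Taylor expansion of $\det D\Phi_I^X(t)$ around $t=0$ exhibits it as $|\det V_I(X)|$ plus commutator corrections, each of which, by the assigned weights $d_j$, is dominated by another summand of $\Lambda(X,\delta)$. Summing over $I$ gives $|\B_L(X,\delta)|\le C\Lambda(X,\delta)$.

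The principal obstacle is the noncommutativity of the flows: $\det D\Phi_I^X(t)$ is not constant in $t$, and to keep it comparable to $\det V_I(X)$ one must exploit the Baker--Campbell--Hausdorff expansion together with the precise fact that commutators of $V_j$'s with total degree $\le D$ span $\R^D$. The weight assignment $d_1=2,\, d_{i+1}=1,\, d_{i+d+1}=3$ is chosen exactly so that every BCH correction of order $k$ contributes a term of size $\delta^k$ that is itself bounded by another determinant summand in $\Lambda(X,\delta)$; making this bookkeeping uniform in $X\in\K$ and $\delta\le \delta_0(\K)$ is where the bulk of the technical work in \cite{NSW} lies, and would be the piece I would need to reproduce most carefully.
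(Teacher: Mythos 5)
This statement is a citation (NSW, Theorem 1, pp. 110); the paper offers no proof of it, so there is nothing internal to compare your sketch against. Judged as a reconstruction of the Nagel--Stein--Wainger argument, your outline is in the right spirit --- exponential boxes $\Phi_I^X(Q_I(\delta))$, matching the summands of $\Lambda$ to infinitesimal Jacobians, forward inclusion by concatenating flows, upper bound by covering and summing --- but two of the load-bearing steps are glossed over in a way that hides where the real work is.

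First, the reverse inclusion. You propose to show $\B_L(X,\delta)\subset \bigcup_I\Phi_I^X(Q_I(C\delta))$ by approximating an arbitrary admissible curve in $C(\delta)$ by piecewise flows of single $V_j$'s and absorbing the error via Gronwall. This is not how NSW handles it, and a Gronwall estimate alone does not give the scale-correct error: the approximation error of replacing a curve controlled by $\sum_j a_j(t)V_j$ by a concatenation of single-field flows involves commutators, and if these are bounded crudely by their sup-norm over $\K$ the error is $O(\delta)$ in Euclidean size, which is not negligible compared to the box $Q_I(\delta)$ whose sides can be as small as $\delta^{3}$. The homogeneity (matching errors of commutator order $k$ against $\delta^{k}$) has to be tracked explicitly, and in NSW it is tied to the choice of a single maximizing tuple $I^*$, not to a union over all $I$.

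Second and more importantly, you assert that for the maximizing $I^*$ the map $\Phi_{I^*}^X$ is a diffeomorphism on $Q_{I^*}(c\delta)$ with $|\det D\Phi_{I^*}^X(t)|\ge\tfrac12|\det V_{I^*}(X)|$ there, and you call this the ``key analytic input.'' But the mechanism that makes it true is exactly what is missing. At $t\ne 0$, $D\Phi_{I^*}^X(t)$ is built from the $V_{i_j}$ pushed forward along partial flows; expanding those pushforwards in the original frame produces the leading term $\det V_{I^*}(X)$ plus correction terms that are themselves (up to bounded coefficients) of the form $\det V_J(X)\cdot\delta^{d_{j_1}+\cdots+d_{j_D}-d_{i_1}-\cdots-d_{i_D}}$ for other tuples $J$. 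The reason these corrections are small --- i.e.\ the reason the inequality does not degenerate when $\det V_{I^*}(X)$ is small --- is precisely the \emph{maximality} of $I^*$ for the weighted quantity $|\det V_I(X)|\delta^{\deg I}$: every competitor summand is, by choice of $I^*$, at most the leading one. Your sketch invokes maximality only to pick $I^*$, but then treats the Jacobian lower bound as a generic nondegeneracy fact uniform on $\K$, which is false (take $X$ with $v=0$ in the present example: some $\det V_I(X)$ vanish). Without wiring the maximality into the Jacobian estimate, the lower bound $|\B_L(X,\delta)|\gtrsim\Lambda(X,\delta)$ does not follow.

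So the plan is recognisably NSW-shaped, but it stops short of the one idea --- weighted comparison of Jacobian corrections against the maximal determinant --- that the whole theorem turns on, and the Gronwall-based reverse inclusion as stated would not close.
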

Here, the differential operators $V_i$, $i=1,...,q$, are also viewed as vector fields, that is, 
\[
V_1(X)=(v_1,...,v_d,0,...0), \quad V_{i+1}(X)=e_{i+d}, \quad V_{i+d+1}(X)=-e_i, \quad i=1,...,d, \, X:=(x_1,...,x_d,v_1,...,v_d).
\]
where $\{e_i, i=1,...,D\}$ is the canonical basis of $\R^D$. Thus the above determinant for a $D\times D$ matrix.
In our case, it is straightforward to check that we have 
\begin{align}\label{volume}
\Lambda(X,\delta)=\sum_{i=1}^d|v_i|\delta^{4d-1}+\delta^{4d}, \quad X:=(x_1,...,x_d,v_1,...,v_d).
\end{align}
It is then clear that 
\begin{align}\label{vol-doub}
\frac{|\B_L(X,2\delta)|}{|\B_L(X,\delta)|}\le \frac{C_2\Lambda(X,2\delta)}{C_1\Lambda(X,\delta)}\le\frac{C_2}{C_1}2^{4d},
\end{align}
where the \abbr{RHS} is uniform for all $X\in\K$.

We can now state the pointwise estimates on Green's function and its partial derivatives in $v$, in the case of our operator $L$. As already remarked before, we do not need estimates for partial derivatives in $x$, which are worse.
\begin{theorem}[{\cite[Corollary, pp. 114]{NSW}, see also \cite[Theorem 1]{FSC}}]
Suppose that $D=2d\ge2$. For any compact set $\K\subset\R^D$, there exist some finite constants $C_i=C_i(\K,d)$, $i=1,2$, such that for any $X=(x,v),Y=(y,w)\in\K$, we have that 
\begin{align}
|G(X,Y)|&\le C_1\frac{\rho_L(X,Y)^2}{|\B_L\big(X,\rho_L(X,Y)\big)|}, \label{gr-bd}\\
|\nabla_vG(X,Y)|&\le C_2\frac{\rho_L(X,Y)}{|\B_L\big(X,\rho_L(X,Y)\big)|} \label{gr-bd-grad}.
\end{align}
\end{theorem}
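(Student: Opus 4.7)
The plan is to reduce both pointwise estimates to Gaussian-type bounds on the fundamental solution $p_t(X,Y)$ of the parabolic companion $\partial_t u = L u$, and then integrate in time. Since $-L$ is a non-negative hypoelliptic operator of Kolmogorov type, one has the standard representation
\begin{equation*}
G(X,Y) = \int_0^\infty p_t(X,Y)\,dt,
\end{equation*}
which converges for $X\ne Y$ under the dimensional assumption $D=2d\ge 2$. The first step is then to establish, uniformly for $X,Y$ in a slightly enlarged compact set, the off-diagonal Gaussian upper bound
\begin{equation*}
p_t(X,Y)\le \frac{C}{|\B_L(X,\sqrt t)|}\exp\Big(-c\frac{\rho_L(X,Y)^2}{t}\Big),
\end{equation*}
together with its $v$-gradient analogue carrying an extra $t^{-1/2}$ prefactor. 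For our specific operator $L = v\cdot\nabla_x+\Delta_v$ this is actually classical and explicit: Kolmogorov wrote $p_t$ down in closed form as a Gaussian in the variables $(v-w, x-y-\tfrac{1}{2}(v+w)t)$, from which the displayed bounds follow directly once one identifies the quadratic form in the exponent with $\rho_L(X,Y)^2$ and the Gaussian prefactor with $|\B_L(X,\sqrt t)|^{-1}$ via \eqref{volume} and \eqref{metric-comp}.

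Given the heat-kernel bound, the second step is a split of the time integral at the natural scale $r:=\rho_L(X,Y)$. On $t\in(0,r^2)$ the Gaussian factor $\exp(-cr^2/t)$ dominates; changing variable $s=r^2/t$ produces an integral of the form $r^2\int_1^\infty |\B_L(X,r/\sqrt s)|^{-1} s^{-2}\,ds$ which, using the local doubling of volumes \eqref{vol-doub}, is bounded by $C r^2/|\B_L(X,r)|$. On $t\in(r^2,\infty)$ the Gaussian weight is essentially constant, and the reverse use of doubling combined with $|\B_L(X,\sqrt t)|\gtrsim (\sqrt t/r)^{\alpha}|\B_L(X,r)|$ for some $\alpha>1$ (here $\alpha=4d$ in the large-$t$ regime via \eqref{volume}) yields $\int_{r^2}^\infty |\B_L(X,\sqrt t)|^{-1}dt \lesssim r^2/|\B_L(X,r)|$. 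Combining the two regimes gives \eqref{gr-bd}, and the gradient estimate \eqref{gr-bd-grad} follows by the identical argument, with the extra $t^{-1/2}$ factor translating into the loss of one power of $r$ in the numerator.

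The main obstacle in executing this plan in the general setting of \cite{NSW,FSC,SC} is the derivation of the off-diagonal Gaussian bounds on $p_t$ with the sharp subRiemannian ball-volume prefactor; this requires a careful parametrix construction or a Nash--Moser iteration adapted to the control metric, and must correctly handle the first-order drift $A_0$ whose degree counts two. In the Kolmogorov case at hand, however, this obstacle largely evaporates thanks to the explicit formula, so that the only real work is the bookkeeping of the two elementary time integrals above and the verification that the explicit quadratic exponent agrees up to constants with $\rho_L(X,Y)^2$ on the compact $\K$ via \eqref{metric-comp}.
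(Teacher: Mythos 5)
The paper does not supply its own proof of this statement: it cites it verbatim from \cite[Corollary, p.~114]{NSW} and \cite[Theorem 1]{FSC}, both of which argue by an elliptic parametrix construction adapted to the control geometry (lifting to a nilpotent group, scaling in the control metric), \emph{not} by time-integrating a heat kernel. Your parabolic route---writing $G=\int_0^\infty p_t\,dt$, proving a Gaussian bound on $p_t$ with sub-Riemannian ball prefactor, and then splitting the time integral at the natural scale $r^2=\rho_L(X,Y)^2$---is a genuinely different and also standard path to such estimates (it is the one used in the references the paper cites for the \emph{representation} $G=\int_0^\infty p_t\,dt$: Varopoulos, Sturm, Ben Arous--Gradinaru). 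The time-integral splitting and the use of \eqref{vol-doub} and \eqref{volume} in both regimes are correctly sketched and do close once the heat-kernel bound is in hand, provided $D=2d\ge 2$ so that the large-$t$ tail converges. So the overall architecture is sound and worth pursuing.

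The gap is in the claim that the off-diagonal Gaussian bound
\begin{equation*}
p_t(X,Y)\le \frac{C}{|\B_L(X,\sqrt t)|}\exp\Big(-c\,\frac{\rho_L(X,Y)^2}{t}\Big)
\end{equation*}
``follows directly'' from the explicit Kolmogorov formula by identifying the prefactor with $|\B_L(X,\sqrt t)|^{-1}$ and the exponent with $\rho_L(X,Y)^2/t$. Neither identification holds. The explicit prefactor is the $v$-\emph{independent} quantity $t^{-2d}$, whereas by \eqref{volume} one has $|\B_L(X,\sqrt t)|^{-1}\asymp \min\{t^{-2d},\,|v|^{-1}t^{-(4d-1)/2}\}$, which is strictly smaller than $t^{-2d}$ by a factor $\sqrt t/|v|$ whenever $|v|>\sqrt t$; so the prefactor you want is \emph{smaller} than the one the explicit formula gives. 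Similarly, the Kolmogorov exponent is \emph{not} comparable to $\rho_L(X,Y)^2/t$: already on the diagonal $X=Y$, where $\rho_L=0$, the explicit kernel \eqref{kolmogorov} reads $p_t(X,X)=(\sqrt 3/2\pi t^2)^d\,e^{-3|v|^2/t}$, whose exponent is nonzero for $v\neq 0$. The two discrepancies in fact compensate one another (on the diagonal one checks $(|v|/\sqrt t)\,e^{-3|v|^2/t}\le C$, and a similar computation works off-diagonal after completing the square in the variables $(v-w)/\sqrt t$ and $(x-y-tv)/t^{3/2}$), but this cancellation is precisely the nontrivial content of the heat-kernel bound and has to be verified, not read off. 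As written, the proposal asserts the key lemma rather than proving it. The gradient bound \eqref{gr-bd-grad} has the same issue, since $\nabla_v$ hits the exponent and produces an extra $v$-dependent factor that must be controlled against the Gaussian, not just a clean $t^{-1/2}$. To complete the proof along your route you should: (i) carry out the completion of squares to show the quadratic form in \eqref{kolmogorov} is positive definite and bounds $c\,\rho_L(X,Y)^2/t$ from below up to an additive term $\sim |v|^2/t$; (ii) show that the residual factor $\min\{1,\sqrt t/|v|\}$ is absorbed by the portion of the exponential decay coming from that additive term; and (iii) do the same after taking $\nabla_v$.
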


We check that \eqref{gr-bd} and \eqref{gr-bd-grad} are sufficiently integrable near its singularity.
\begin{proposition}\label{ppn:unif-int}
Suppose that $D=2d\ge2$. There exists some $\eta=\eta(d)>0$ such that for any compact set $\K\subset\R^D$, and any $X=(x,v)\in\K$, $\delta\in(0,\infty)$, there are finite constants $C_i(\delta,\K,d)$, $i=1,2$ such that
\begin{align}
\int_{\B_L(X,\delta)}|G(X,Y)|^{1+\eta}dY&\le C_1 \label{int-bd-gr}\\
\int_{\B_L(X,\delta)}|\nabla_vG(X,Y)|^{1+\eta}dY&\le C_2.\label{int-bd-gr-grad}
\end{align}
\end{proposition}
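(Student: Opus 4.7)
The plan is to partition $\B_L(X,\delta)$ into dyadic annuli centered at $X$ and estimate the integrals on each annulus using the pointwise bounds \eqref{gr-bd}--\eqref{gr-bd-grad} together with the volume-doubling property \eqref{vol-doub}. Set $r_k := 2^{-k}\delta$ for $k\geq 0$ and
$$
A_k := \B_L(X, r_k)\setminus \B_L(X, r_{k+1}),
$$
so that $\B_L(X,\delta)=\bigsqcup_{k\geq 0} A_k$ up to a null set. On $A_k$ one has $r_{k+1}<\rho_L(X,Y)\leq r_k$, and hence by \eqref{vol-doub} the denominator $|\B_L(X,\rho_L(X,Y))|$ is comparable to $|\B_L(X,r_k)|$ up to a constant depending only on $d$. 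Since $X\in\K$ and $Y\in\B_L(X,\delta)$, both variables lie in an enlarged compact set $\wt{\K}=\wt{\K}(\K,\delta)$, on which the constants in \eqref{gr-bd}--\eqref{gr-bd-grad} are uniform.

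Substituting these observations into the integrand and using the trivial bound $|A_k|\leq |\B_L(X,r_k)|$ yields
\begin{align*}
\int_{A_k}|G(X,Y)|^{1+\eta}\,dY \leq C\,\frac{r_k^{2(1+\eta)}}{|\B_L(X,r_k)|^{\eta}},\qquad \int_{A_k}|\nabla_v G(X,Y)|^{1+\eta}\,dY \leq C\,\frac{r_k^{1+\eta}}{|\B_L(X,r_k)|^{\eta}}.
\end{align*}
Next I use the elementary lower bound $|\B_L(X,r_k)|\geq c\, r_k^{4d}$, valid uniformly for $X\in\K$, which follows directly from $\Lambda(X,r_k)\geq r_k^{4d}$ in \eqref{volume} (no cancellation from the $|v_i|$-term is needed). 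The two right-hand sides above are then bounded respectively by $C\,r_k^{2-(4d-2)\eta}$ and $C\,r_k^{1-(4d-1)\eta}$.

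Summing over $k\geq 0$ gives geometric series in $r_k$ that converge as soon as both exponents are strictly positive; the choice $\eta=\eta(d)\in\bigl(0,\tfrac{1}{4d-1}\bigr)$ simultaneously works for both \eqref{int-bd-gr} and \eqref{int-bd-gr-grad} and yields total constants of the form $C(\delta,\K,d)$ as required. I do not foresee a serious obstacle here: the volume-doubling property and the elementary lower bound on $|\B_L(X,r_k)|$ do all the work, and the main content is really the fact that the singularity of the Green's function is integrable to some power strictly above one. The principal bookkeeping point to watch is that the enlarged compact set $\wt{\K}$ grows with $\delta$, which is precisely why the constants in \eqref{int-bd-gr}--\eqref{int-bd-gr-grad} are allowed to depend on $\delta$.
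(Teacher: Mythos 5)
Your proposal is correct and follows essentially the same route as the paper: dyadic decomposition of $\B_L(X,\delta)$ into annuli, the pointwise bounds \eqref{gr-bd}--\eqref{gr-bd-grad} combined with the volume-doubling property \eqref{vol-doub}, the elementary lower bound $|\B_L(X,r)|\gtrsim r^{4d}$ from \eqref{volume}, and a geometric series closing for $\eta>0$ small enough. The only cosmetic difference is that the paper bounds the denominator by the smaller ball $|\B_L(X,r_{k+1})|$ and invokes doubling once more at the end, whereas you absorb doubling immediately to compare with $|\B_L(X,r_k)|$; the resulting exponent conditions agree.
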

\begin{proof}
We prove \eqref{int-bd-gr-grad} while the proof of \eqref{int-bd-gr} is similar. For $X\in\K\subset\R^D$ compact and $Y\in\B_L(X,\delta)$ with $\delta\in(0,\infty)$, they both belong to some other compact set $\K'$ that depends only on $\K, \delta$. This fact can be inferred from the meaning of the control metric Definition \ref{def:control} in the case of our operator $L$. By \eqref{gr-bd-grad}, there exists some $C=C(\K',d)$ such that for any $\eta>0$,
\begin{align*}
\int_{\B_L(X,\delta)}|\nabla_vG(X,Y)|^{1+\eta}dY&\le C\int_{\B_L(X,\delta)}\frac{\rho_L(X,Y)^{1+\eta}}{|\B_L\big(X,\rho_L(X,Y)\big)|^{1+\eta}}dY\\
&=C\sum_{k=0}^\infty\int_{\B_L(X,\delta 2^{-k})\backslash\B_L(X,\delta2^{-k-1})}\frac{\rho_L(X,Y)^{1+\eta}}{|\B_L\big(X,\rho_L(X,Y)\big)|^{1+\eta}}dY\\
&\le C\sum_{k=0}^\infty \frac{(\delta2^{-k})^{1+\eta}}{|\B_L\big(X,\delta2^{-k-1}\big)|^{1+\eta}}|\B_L(X,\delta2^{-k})\backslash\B_L(X,\delta2^{-k-1})|\\
&\le C\sum_{k=0}^\infty \frac{(\delta2^{-k})^{1+\eta}}{|\B_L\big(X,\delta2^{-k-1}\big)|^\eta}\frac{|\B_L(X,\delta2^{-k}))|}{|\B_L\big(X,\delta2^{-k-1}\big)|}.
\end{align*}
By \eqref{volume}, 
\begin{align*}
|\B_L\big(X, \delta2^{-k-1}\big)|\ge C_1(\K)\Lambda(X,\delta2^{-k-1})\ge C_1(\delta2^{-(k+1)})^{4d}.
\end{align*}
Combined with the local volume doubling property \eqref{vol-doub} of $\B_L(\cdot)$, we conclude that 
\begin{align*}
\int_{\B_L(X,\delta)}|\nabla_vG(X,Y)|^{1+\eta}dY\le C'\sum_{k=0}^\infty 2^{-k(1+\eta-4d\eta)},
\end{align*}
for some finite constant $C'=C'(\K,d, \delta)$. Choosing $\eta>0$ small enough such that $1+\eta-4d\eta>1/2$, the sum on the \abbr{RHS} is convergent.
\end{proof}

\medskip
Another remark is that in the case of our operator $-L$ \eqref{hyp-op}, its Green's function is translation invariant in $x$ (but not in $v$). That is, we claim that 
\begin{align}\label{transl}
G\big((x,v),(y,w)\big) = G\big((0,v),(y-x,w)\big).
\end{align}
This can be seen starting from the explicit kernel $\mathsf P_t\big((x,v),(y,w)\big)$ associated with the parabolic operator $\partial_t-L$. Since it is probabilistically linked to Gaussian processes, it can be explicitly calculated that, cf. \cite{Ko}, \cite[Eq. (3.24)-(3.25)]{FFPV}, \cite[Eq. (1.4)]{GT}, 
\begin{align}\label{kolmogorov}
\mathsf P_t\big((x,v),(y,w)\big)&=\left(\frac{\sqrt{3}}{2\pi t^2}\right)^d\exp\Big\{-\frac{|v-w|^2}{t}-3\frac{(v-w)\cdot(x-y-tv)}{t^2}-3\frac{|x-y-tv|^2}{t^3}\Big\}\nonumber\\
&=\left(\frac{\sqrt{3}}{2\pi t^2}\right)^d\exp\Big\{-3\big|\frac{v-w}{2t^{1/2}}+\frac{x-y-tv}{t^{3/2}}\big|^2-\frac{|v-w|^2}{4t}\Big\}.
\end{align}
which depends on $x,y$ through $y-x$ only. Since the Green's function is the time-integral of the parabolic kernel (cf. \cite[pp. 109]{NV}, \cite[pp. 220]{BAG}, \cite[pp. 276]{St}):
\begin{align*}
G\big((x,v),(y,w)\big)=\int_0^\infty \mathsf P_t\big((x,v), (y,w)\big) \, dt,
\end{align*}
the same translation invariance with respect to $x$ transfers to $G$ and thus we have \eqref{transl}.

\section{The control of negligible terms}\label{sec:control}
We proceed to bound all the terms coming out of It\^o's formula applied to $\cY_t$ in Section \ref{sec:ito}, that are expected to be negligible, thereby completing the proof of Proposition \ref{ppn:shift}. By a slight abuse of notation, we define on the same probability space $(\Omega,\cF,\P)$
\begin{align}\label{free}
X_i(t):=(x_i(t),v_i(t)):=\left(x_i(0)+v_i(0)t+\int_0^tB_i(s)ds, \, v_i(0)+B_i(t)\right), \quad i\in\N, \; t\ge0,
\end{align}
a countable collection of independent particles, with the {\it{same}} Brownian motions $\{B_i(t)\}_{i=1}^\infty$ as \eqref{dynamics} and {\it{same}} initial condition for $(x_i(0),v_i(0))_{i=1}^\infty$ with density $f_0(x,v)$ satisfying Condition \ref{cond:ini}. The only difference with \eqref{dynamics} is that the particles in \eqref{free} are non-interacting (i.e. they do not annihilate), hence they are independent, each with infinite lifespan. Further, there is an obvious a.s. domination between \eqref{dynamics} and \eqref{free} such that whenever $(x_i^N(t),v_i^N(t))$ is active, its trajectory coincides with that of $(x_i(t),v_i(t))$. 

\begin{lemma}\label{lem:den}
Let $d\ge1$. For any $t\ge0$, the density of $X_1(t)-X_2(t)$ is uniformly bounded by $\Gamma$, the uniform bound on $f_0$ in Condition \ref{cond:ini}. Further, the density $\mathsf p(t,x,v)$ of one particle $X_1(t)$ satisfies for every $t\in[0,T]$, $x,v\in\R^d$, the bound
\begin{align}\label{exp-decay}
\mathsf p(t,x,v)\le  Ce^{-|v|/C}1_{\{|v|\ge C\}}+\Gamma1_{\{|v|< C\}},
\end{align}
for some constant $C=C(R,\Gamma,T,d)$, where $R$ is the maximal radius of the compact support of $f_0$ in Condition \ref{cond:ini}.
\end{lemma}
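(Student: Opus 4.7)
The plan is to reduce both claims to conditioning on the Brownian paths together with an elementary change of variables whose Jacobian equals one. Recall that
\begin{align*}
X_i(t) = \Bigl(x_i(0) + v_i(0)t + \int_0^t B_i(s)\,ds,\; v_i(0) + B_i(t)\Bigr).
\end{align*}
Given $B_1$, $B_2$, the map sending $(x_1(0), v_1(0), x_2(0), v_2(0))$ to $X_1(t) - X_2(t)$ is affine in the initial conditions, and if I abbreviate $(a,b) := (x_1(0) - x_2(0),\, v_1(0) - v_2(0))$, then conditional on $B_1, B_2$,
\begin{align*}
X_1(t) - X_2(t) = (a + bt + c_1,\; b + c_2),
\end{align*}
with $c_1 := \int_0^t (B_1 - B_2)(s)\,ds$, $c_2 := B_1(t) - B_2(t)$, and the linear part has matrix $\begin{pmatrix} I & tI \\ 0 & I \end{pmatrix}$ of determinant one. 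Hence the conditional density of $X_1(t) - X_2(t)$ is bounded by $\|g\|_\infty$, where $g$ denotes the density of $(a,b)$. Since $(a,b)$ is the difference of two i.i.d.\ vectors with density $f_0$, a one-line convolution bound gives $\|g\|_\infty \le \|f_0\|_\infty \int f_0 = \Gamma$. Taking expectation over the Brownian motions then yields the first claim.

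For the second claim, the same conditioning applied to a single particle gives, by the same unit-Jacobian computation,
\begin{align*}
\mathsf p(t, x, v) = \E\Bigl[f_0\Bigl(x - (v - B_1(t))\,t - \int_0^t B_1(s)\,ds,\; v - B_1(t)\Bigr)\Bigr],
\end{align*}
which already yields the uniform bound $\mathsf p \le \Gamma$. To extract decay in $v$, I use that $\mathrm{supp}(f_0) \subset \B_E((0,0), R)$ forces the integrand to vanish unless $|v - B_1(t)| \le R$, whence
\begin{align*}
\mathsf p(t,x,v) \le \Gamma \cdot \P\bigl(|B_1(t)| \ge |v| - R\bigr).
\end{align*}
For $|v| \ge 2R$ this is at most $\Gamma \cdot \P(|B_1(t)| \ge |v|/2)$, which by a standard Gaussian tail estimate for Brownian motion at times $t \in [0,T]$ is dominated by $C_d\, e^{-|v|^2/(8dT)}$.

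Combining the two regimes, I choose $C = C(R,\Gamma,T,d)$ large enough that $C \ge 2R$ and that the Gaussian bound above is at most $C e^{-|v|/C}$ whenever $|v| \ge C$; for $|v| < C$ the bound $\mathsf p \le \Gamma$ suffices. This produces \eqref{exp-decay}. There is essentially no obstacle beyond book-keeping: the crux of the argument is the observation that, conditional on the Brownian motions, the map from initial data to $X_i(t)$ is volume-preserving and affine, so all density estimates reduce to properties of $f_0$ and Gaussian tail bounds.
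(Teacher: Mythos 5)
Your proof is correct and takes a genuinely different, more self-contained route than the paper. The paper first invokes hypoellipticity/Fokker--Planck theory and the explicit Kolmogorov transition kernel $\mathsf P^\star_t$ (with citations for the formula and for the $L^\infty$-contraction of the associated semigroup), integrates the kernel against $f_0$ over the support $|x_0|,|v_0|\le R$, and then uses the substitution $\lambda=|v|^2/t$ together with the observation $\lambda\ge|v|$ to convert a Gaussian-in-$v$ decay of the kernel into the stated exponential decay of $\mathsf p$. You instead condition on the Brownian path and exploit that, given $B_1$, the map from initial data to state at time $t$ is affine with unit Jacobian; this gives the representation $\mathsf p(t,x,v)=\E\,f_0\bigl(x-(v-B_1(t))t-\int_0^t B_1, \,v-B_1(t)\bigr)$ directly, from which both the uniform bound by $\Gamma$ and the tail bound reduce to $\mathrm{supp}(f_0)\subset\B_E(0,R)$ and a standard Gaussian tail for $B_1(t)$, $t\in[0,T]$. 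Similarly for the difference $X_1(t)-X_2(t)$, you bound the conditional density by the sup norm of the $f_0*\check f_0$ convolution rather than convolving the two time-$t$ densities as the paper does. The two arguments are of course both grounded in the Gaussianity of the free dynamics, and your conditioning argument is essentially how one derives the kernel formula in the first place; but your version sidesteps the literature citations, uses no PDE machinery, and is arguably cleaner. One small point worth making explicit in your write-up: $\P(|B_1(t)|\ge a)$ is increasing in $t$ (since $B_1(t)/\sqrt t$ is standard Gaussian), so the supremum over $t\in[0,T]$ is attained at $t=T$; this is implicit in "at times $t\in[0,T]$" but deserves a word.
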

\begin{proof}
The Markov generator of a single-particle process $X_1(t)$ is 
\[
L:=v\cdot\nabla_x+\frac{1}{2}\Delta_v.
\]
(Here we abuse notation and use the same letter $L$ even though the operator \eqref{hyp-op} discussed in Section \ref{sec:hypoe} differs by $1/2$ in front of the Laplacian.) The hypoellipticity of $L$ implies that $X_1(t)$, $t>0$, has a smooth density $\mathsf p(t,x,v)$ with respect to Lebesgue measure of $\R^{2d}$ \cite[Theorems 2.3.2, 2.3.3]{Nua}, and it is the unique solution of the Fokker-Planck equation
\begin{align}\label{evol-den}
\partial_t\mathsf p(t,x,v) &= L^*\mathsf p(t,x,v)\\
\mathsf p|_{t=0}&=f_0(x,v), \nonumber
\end{align}
where $L^*$ is the adjoint of $L$
\begin{align*}
L^*=-v\cdot\nabla_x+\frac{1}{2}\Delta_v.
\end{align*}
One can write $\mathsf p(t,x,v)$ explicitly as 
\[
\mathsf p(t,x,v)=\iint f_0(x_0,v_0)\mathsf P^\star_t\big((x_0,v_0), (x,v)\big)dx_0dv_0,
\]
where the kernel $\mathsf P^\star_t$ is given by a formula similar to \eqref{kolmogorov} but with certain sign and constants changes:
\begin{align}\label{one-particle}
\mathsf P^\star_t\big((x_0,v_0),(x,v)\big):=\left(\frac{\sqrt{3}}{\pi t^2}\right)^d\exp\Big\{-6\big|\frac{v-v_0}{2t^{1/2}}+\frac{x-x_0-tv_0}{t^{3/2}}\big|^2-\frac{|v-v_0|^2}{2t}\Big\}.
\end{align}
It then follows (e.g. from probabilistic representation of the solution of \eqref{evol-den}, cf. \cite[pp. 254, Eq. (21)]{Pr}, \cite[Lemma 2.4 (ii)]{GT}) that 
\begin{align*}
\left\|\mathsf p(t,\cdot)\right\|_{L^\infty}\le \|f_0\|_{L^\infty}\le\Gamma.
\end{align*}
Since the density of $X_1(t)-X_2(t)$ is a convolution between the respective densities of $X_1(t)$ and $-X_2(t)$, the two being independent, we conclude that it is also uniformly bounded by $\Gamma$.

Turning to a finer bound on $\mathsf p(t,x,v)$ when $|v|$ is large compared to $T$ and $R$, we note that by \eqref{one-particle},
\begin{align*}
\mathsf p(t,x,v)&=\iint_{\R^{2d}}f_0(x_0,v_0)\mathsf P^\star_t\big((x_0,v_0),(x,v)\big)dx_0dv_0\\
&\le C(\Gamma) \iint_{|x_0|,|v_0|\le R}\frac{1}{t^{2d}}\exp\left\{-\frac{|v-v_0|^2}{4t}\right\}dx_0dv_0.
\end{align*}
If $|v_0|\le R$ and 
\begin{align*}
|v|\ge 2R+T\ge t,
\end{align*}
then $|v-v_0|\ge |v|/2$ and $\lambda:=|v|^2/t \ge |v|$. Hence, for any such $v$ we have that 
\begin{align*}
\mathsf p(t,x,v)&\le C(\Gamma)R^{2d}\frac{|v|^{4d}}{t^{2d}}e^{-\frac{|v|^2}{16t}}|v|^{-4d}\\
& \le C(\Gamma, R, T,d)\lambda^{2d} e^{-\lambda/16}.
\end{align*}
Since $\lambda\mapsto \lambda^{2d}e^{-\lambda/16}$ is decreasing for $\lambda\ge |v|\ge \lambda_0$ for some $\lambda_0(d)<\infty$, we get the bound 
\begin{align*}
\mathsf p(t,x,v)\le C|v|^{2d} e^{-|v|/C},
\end{align*}
for any $|v|\ge C$, $x\in\R^d$ and $t\in[0,T]$, where $C=C(\Gamma, R, T,d)$ is a finite constant. Further adjusting the constant $C$ to absorb the polynomial factor, we get \eqref{exp-decay}.
\end{proof}

\medskip
The following proposition bounds \eqref{double-sum-1}. We note at this stage that since the independent system \eqref{free} dominates our true system \eqref{dynamics} in terms of lifespan, it suffices to prove all the following propositions for \eqref{free}.
\begin{proposition}\label{ppn:double-1}
For any finite $T$ and $d\ge1$, we have that
\begin{align*}
\lim_{|z|\to0}\limsup_{N\to\infty}\E\int_0^Tdt\frac{1}{N^2}\sum_{i\neq j=1}^N\Big|v_i(t)\cdot\nabla_x\phi\big(t, x_i(t),v_i(t)\big) \psi\big(x_j(t),v_j(t)\big) H^{\ep,z}\big(x_i(t)-x_j(t), v_i(t)-v_j(t)\big)\Big|=0.
\end{align*}
\end{proposition}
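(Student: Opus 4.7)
The plan is to exploit the a.s.\ domination of \eqref{dynamics} by the free system \eqref{free}: since the integrand is nonnegative and $\cN(t)\subset\{1,\ldots,N\}$, it suffices to prove the same bound with $(x_i^N,v_i^N)$ replaced by the i.i.d.\ particles $X_i(t)=(x_i(t),v_i(t))$ and the sum extended over all $i\ne j\in\{1,\ldots,N\}$. The compact support of $\phi,\psi$ gives $|v_i\cdot\nabla_x\phi(t,X_i)\psi(X_j)|\le C\mathbf 1_{K_0}(X_i,X_j)$ for a fixed compact $K_0\subset\R^{4d}$ and a constant $C$ depending on $K$ and $\|\phi\|_{C^1},\|\psi\|_\infty$. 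Taking expectation against the joint density, using $\mathsf p(t,\cdot)\le\Gamma$ from Lemma \ref{lem:den}, and changing variables to $(w,u)=(x_1-x_2,v_1-v_2)$, the whole question reduces to the purely analytic claim
\[
\lim_{|z|\to 0}\limsup_{\ep\to 0}\int_{K_1}|H^{\ep,z}(w,u)|\,dw\,du=0
\]
for a suitable compact $K_1\subset\R^{2d}$.

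To establish this, I would use the translation invariance \eqref{transl} of $G$ in the $x$-slot. Writing out $U^\ep$ via \eqref{gr-conv} and moving the $z$-shift off $\theta^\ep$ onto $G$ by a change of variables produces the representation
\[
H^{\ep,z}(w,u)=\iint\bigl[G((0,u),(y-z,w'))-G((0,u),(y,w'))\bigr]\theta^\ep(y+w)\chi^K(w')\,dy\,dw'.
\]
Taking absolute values and integrating $(w,u)\in K_1$, doing the $w$-integral first, the mass $\int\theta^\ep(y+w)\,dw\le 1$ localizes $y$ to a fixed compact set (as $\ep\le 1$), $\chi^K$ localizes $w'$ to $\{|w'|\le 4K\}$, and $u$ already lies in a fixed compact. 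Hence
\[
\int_{K_1}|H^{\ep,z}(w,u)|\,dw\,du\le\iiint_{K_2}\bigl|G((0,u),(y-z,w'))-G((0,u),(y,w'))\bigr|\,du\,dy\,dw'
\]
for a compact $K_2\subset\R^{3d}$ independent of $\ep$. This deferment of the $z$-shift onto the Green's function, rather than onto the near-singular $\theta^\ep$, is the core use of the It\^o--Tanaka mechanism here, and it is why all $\ep$-dependence has now disappeared.

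The main obstacle is then the resulting translation-continuity statement for $G$ on a compact set. The key input is Proposition \ref{ppn:unif-int}, which provides local $L^{1+\eta}$-integrability of $G((0,u),\cdot)$ with bounds uniform in $u$ on compact sets. The standard continuity of translation in $L^{1+\eta}$ (valid because $1+\eta<\infty$) gives the fiberwise limit $\iint|G((0,u),(y-z,w'))-G((0,u),(y,w'))|\,dy\,dw'\to 0$ for each fixed $u$; a dominated-convergence argument in $u$, whose dominant is $2\sup_u\|G((0,u),\cdot)\|_{L^1(\tilde K_2)}$ (uniformly bounded by H\"older's inequality and Proposition \ref{ppn:unif-int}, with $\tilde K_2$ a slight enlargement of $K_2$ to accommodate small $|z|$), then promotes this to the full triple integral. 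It is precisely at this last step that the $L^{1+\eta}$ rather than merely $L^1$ bound on $G$ is used, and where I expect the technical delicacy to lie.
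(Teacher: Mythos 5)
Your proposal follows the same strategy as the paper: dominate by the free system, localize via the test functions and bound $\mathsf p^{12}\le\Gamma$ (Lemma \ref{lem:den}), use translation invariance \eqref{transl} to move the $z$-shift from $\theta^\ep$ onto the Green's function (your representation of $H^{\ep,z}$ matches \eqref{change} up to the symmetry of $\theta$), and then establish an $\ep$-free translation-continuity statement for $G$ on a compact set, with the $L^{1+\eta}$ bound of Proposition \ref{ppn:unif-int} as the key input. The only difference is cosmetic: the paper sends $\ep\to0$ so that $\theta^\ep$ becomes a Dirac and then applies Vitali/de la Vall\'ee-Poussin (pointwise continuity of $G$ plus uniform integrability), whereas you keep $\theta^\ep$, bound its mass by $1$, and deduce the final limit from $L^p$-continuity of translation plus dominated convergence in $u$ — both steps are correct and powered by the same estimate.
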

\begin{proof}
Recall that we have denoted by $K=K^{\phi,\psi}$ the maximal radius of the compact supports of $\phi, \psi$, and defined a compactly-supported function $\chi^K:\R^d\to[0,1]$ \eqref{compact}. For the quantity in the thesis, it suffices to consider 
\begin{align*}
|x_i(t)|, |x_j(t)|, |v_i(t)|, |v_j(t)|\le K
\end{align*}
(otherwise one of the arguments of $\phi$ and $\psi$ is zero). When this is the case, we have that 
\begin{align*}
|x_i(t)-x_j(t)|, |v_i(t)-v_j(t)|\le 2K.
\end{align*}
We control each term of the double sum in the thesis, and by exchangeability it suffices to focus on the term containing the particles with index $1$ and $2$:
\begin{align*}
&\E\int_0^Tdt\Big|v_1(t)\cdot\nabla_x\phi\big(t, x_1(t),v_1(t)\big) \psi\big(x_2(t),v_2(t)\big)H^{\ep,z}\big(x_1(t)-x_2(t), v_1(t)-v_2(t)\big)\Big|\\
&\le K\|\phi\|_{C^1}\|\psi\|_{L^\infty}\E\int_0^Tdt\big|H^{\ep,z}\big(x_1(t)-x_2(t), v_1(t)-v_2(t)\big)\big|\chi^K\big(x_1(t)-x_2(t)\big)\chi^K\big(v_1(t)-v_2(t)\big).
\end{align*}
Denote by $\mathsf p^{12}(t,x,v)$ the density of $\big(x_1(t)-x_2(t), v_1(t)-v_2(t)\big)$, which by Lemma \ref{lem:den} is uniformly bounded by $\Gamma$. We can write the expectation above explicitly as 
\begin{align}\label{eq:main-1}
\int_0^Tdt\iint_{\R^{2d}}|H^{\ep,z}(x,v)|\chi^K(x)\chi^K(v)\mathsf p^{12}(t,x,v)dxdv.
\end{align}
By \eqref{gr-conv}, \eqref{transl}, a change of variables $\wt y=y-x$ and the symmetry of $\theta^\ep$, we have that 
\begin{align}\label{change}
H^{\ep,z}(x,v)&=\iint_{\R^{2d}}G\big((x,v),(y,w)\big)\big[\theta^\ep(y)-\theta^\ep(y+z)\big]\chi^K(w)dydw\nonumber\\
&=\iint_{\R^{2d}}G\big((0,v),(y-x,w)\big)\big[\theta^\ep(y)-\theta^\ep(y+z)\big]\chi^K(w)dydw\nonumber\\
&=\iint_{\R^{2d}}G\big((0,v),(\wt y,w)\big)\big[\theta^\ep(\wt y+x)-\theta^\ep(\wt y+x+z)\big]\chi^K(w)d\wt ydw\nonumber\\
&=\iint_{\R^{2d}}G\big((0,v),(\wt y,w)\big)\big[\theta^\ep(-\wt y-x)-\theta^\ep(-\wt y-x-z)\big]\chi^K(w)d\wt ydw\nonumber\\
&=\iint_{\R^{2d}}\big[G\big((0,v),(\wt y,w)\big)-G\big((0,v),(\wt y-z,w)\big)\big]\theta^\ep(-\wt y-x)\chi^K(w)d\wt ydw.
\end{align}
It yields that 
\begin{align*}
&\iint_{\R^{2d}}|H^{\ep,z}(x,v)|\chi^K(x)\chi^K(v)\mathsf p^{12}(t,x,v)dxdv\\
&\le \iiint_{\R^{3d}}\big|G\big((0,v),(\wt y,w)\big)-G\big((0,v),(\wt y-z,w)\big)\big|\chi^K(w)\Big(\int_{\R^d}\theta^\ep(-\wt y-x)\chi^K(x)\chi^K(v)\mathsf p^{12}(t,x,v)dx\Big)d\wt ydwdv\\
&\overset{\ep\to0}{\to}\iiint_{\R^{3d}}\big|G\big((0,v),(\wt y,w)\big)-G\big((0,v),(\wt y-z,w)\big)\big|\chi^K(w)\chi^K(-\wt y)\chi^K(v)\mathsf p^{12}(t,-\wt y,v)d\wt ydwdv.
\end{align*}
Since the \abbr{RHS} no longer depends on $\ep$, and also $\mathsf p^{12}$ is bounded uniformly by $\Gamma$, it suffices to prove that the following converges to $0$ as $|z|\to0$:
\begin{align*}
\iiint_{\R^{3d}}\big|G\big((0,v),(\wt y,w)\big)-G\big((0,v),(\wt y-z,w)\big)\big|\chi^K(w)\chi^K(-\wt y)\chi^K(v)d\wt ydwdv.
\end{align*}
By the continuity of $G$, the integrand converges to $0$ as $|z|\to0$ Lebesgue a.e. Further, due to the compact support of $w,\wt y,v$ we can apply the finite measure case of Vitali's convergence theorem, if we can check uniform integrability. By the de la Vall\'ee-Poussin theorem it is enough to check that there exists some $\eta=\eta(d)>0$ and $C=C(K,d)$ finite such that
\begin{align}\label{vitali}
\sup_{|z|\le 1}\iiint_{\R^{3d}}\big|G\big((0,v),(\wt y-z,w)\big)\big|^{1+\eta}\chi^K(w)\chi^K(-\wt y)\chi^K(v)d\wt ydwdv<C.
\end{align}
Note that the support of $(0,v)$ is in a compact set, the Euclidean ball $\B_E((0,0),2K)$ in $\R^{2d}$. By \eqref{metric-comp} there exists a ball in the metric $\rho_L$ of some radius $K'$ (depending only on $K$ and $d$) such that for any $|z|\le1$ and $-\wt y,w$ in the compact support of $\chi^K$,
\begin{align*}
(\wt y-z,w)\in\B_E\big((0,v), 16K\big)\cap\B_E\big((0,0),32K\big)\subset \B_L\big((0,v),K'\big).
\end{align*}
Hence, we are in the setting of Proposition \ref{ppn:unif-int}. By \eqref{int-bd-gr} there exists some $\eta=\eta(d)>0$ and constant $C=C(K,K',d)$ finite such that for any fixed $v$ with $|v|\le 2K$ we have that 
\begin{align*}
\iint_{\B_L\left((0,v),K'\right)}\big|G\big((0,v),(\wt y-z,w)\big)\big|^{1+\eta}d\wt ydw \le C.
\end{align*}
Integrating further in $|v|\le 2K$ yields a different constant which proves \eqref{vitali}, hence the whole thesis.
\end{proof}
\begin{remark}
An analogous proof as in Proposition \ref{ppn:double-1} can treat \eqref{ini-term}, \eqref{double-sum-0} and \eqref{double-sum-2} and prove their negligibility. Indeed, they are all of the form of an averaged (by $N^{-2}$) double sum over distinct pairs of particles $i\neq j$, of an expression that consists of $H^{\ep,z}(x_i-x_j, v_i-v_j)$ multipled by two compact-supported smooth test functions localizing all four variables $x_i,x_j,v_i,v_j$. Since it is an averaged double sum, we can focus on controlling the term of a particular pair $i=1, j=2$. Despite the test functions slightly differ, we only use that they are bounded and that they make the variables compactly supported. Finally, the uniform boundedness of the particle joint densities $\mathsf p^{12}$ suggests that the time-integral $\int_0^T$ is not important, and the same proof works for the terms \eqref{ini-term}, corresponding to the process $\cY_t$ at initial and terminal times.
\end{remark}

\medskip
To prepare for the next proposition, we require a crude pointwise bound on $H^{\ep,z}$ and its partial derivative in $v$. It suffices to derive such for the function $U^\ep$.

\begin{lemma}\label{crude}
Suppose that $D=2d\ge2$. For any compact set $\K\subset\R^D$, there exists some finite constant $C=C(\K,K,\theta,d)$ such that for any $(x,v)\in\K$ we have that
\begin{align}\label{unif-U}
|U^\ep(x,v)|, |\nabla_vU^\ep(x,v)|\le C\ep^{-d}.
\end{align}
\end{lemma}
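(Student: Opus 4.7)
The plan is to start from the integral representation
\[
U^\ep(x,v) = \iint_{\R^{2d}} G\big((x,v),(y,w)\big)\,\theta^\ep(y)\chi^K(w)\,dy\,dw
\]
given in \eqref{gr-conv}, pull out the $L^\infty$ norm of the source, and reduce everything to an $L^1$ bound on the Green's function (respectively its $v$-gradient) over a fixed compact region. Since $\|\theta^\ep\|_{L^\infty}\le C\ep^{-d}$, $\|\chi^K\|_{L^\infty}\le 1$, and the support of $\theta^\ep(y)\chi^K(w)$ lies inside the fixed Euclidean set $\{|y|\le 1,\,|w|\le 4K\}$ (for $\ep\le 1$), it suffices to prove that for $(x,v)\in\K$ one has
\[
\int_{|y|\le 1,\,|w|\le 4K}\!\!|G((x,v),(y,w))|\,dy\,dw \le C(\K,K,d), \qquad \int_{|y|\le 1,\,|w|\le 4K}\!\!|\nabla_v G((x,v),(y,w))|\,dy\,dw \le C(\K,K,d).
\]
Multiplying by $\|\theta^\ep\|_{L^\infty}\le C\ep^{-d}$ then yields \eqref{unif-U}. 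Differentiation under the integral sign is justified because $G$ is $C^\infty$ off the diagonal and the integrals against $\nabla_v G$ will be shown to be absolutely convergent.

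First I would enclose the integration region in a single $\rho_L$-ball: by the metric comparison \eqref{metric-comp}, there is a radius $\delta=\delta(\K,K,d)$ such that $\{(y,w):|y|\le 1,\,|w|\le 4K\}\subset \B_L((x,v),\delta)$ uniformly for $(x,v)\in\K$. Then I would run exactly the dyadic annular decomposition used in the proof of Proposition \ref{ppn:unif-int}, but on $|G|^1$ and $|\nabla_v G|^1$ rather than on $|\cdot|^{1+\eta}$. Using \eqref{gr-bd} and volume doubling \eqref{vol-doub},
\[
\int_{\B_L(X,\delta)}|G(X,Y)|\,dY \le C\sum_{k=0}^\infty \frac{(\delta 2^{-k})^2}{|\B_L(X,\delta 2^{-k-1})|}\,|\B_L(X,\delta 2^{-k})| \le C\,\delta^{2}\sum_{k=0}^\infty 2^{-2k}\cdot 2^{4d},
\]
and similarly, by \eqref{gr-bd-grad},
\[
\int_{\B_L(X,\delta)}|\nabla_v G(X,Y)|\,dY \le C\,\delta\sum_{k=0}^\infty 2^{-k}\cdot 2^{4d}.
\]
Both geometric series converge, giving the desired $L^1$ bounds with a constant depending only on $\K$, $K$, $d$ (through $\delta$).

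The only mildly delicate point is the one just handled, namely that both the plain Green's function and its $v$-derivative are $L^1_{\mathrm{loc}}$ near the diagonal despite their pointwise singularity. Note that we do not need any gain in $\ep$ here; on the contrary, the estimate is deliberately crude and only meant to capture the trivial $\ep^{-d}$ blow-up inherited from $\|\theta^\ep\|_{L^\infty}$. The bound on $H^{\ep,z}=U^\ep(\cdot+z,\cdot)-U^\ep(\cdot,\cdot)$ and $\nabla_v H^{\ep,z}$ then follows by the triangle inequality applied at the points $(x+z,v)$ and $(x,v)$, at the cost of enlarging the compact set $\K$ by $|z|\le 1$, which does not affect the constant qualitatively.
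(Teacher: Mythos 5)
Your proposal is correct and matches the paper's proof in all essentials: pull $\|\theta^\ep\|_{L^\infty}\le\ep^{-d}\|\theta\|_{L^\infty}$ out of the integral representation \eqref{gr-conv}, enclose the compact support of the source in a $\rho_L$-ball, and obtain $L^1$ bounds for $G$ and $\nabla_v G$ via the dyadic annular decomposition of Proposition \ref{ppn:unif-int}. The paper simply cites that proposition ``with $\eta=0$'' rather than rerunning the computation, but your explicit verification that the geometric series converge at exponent $1$ is exactly the content of that citation.
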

\begin{proof}
We prove the thesis for the gradient, the other part being similar. Recall \eqref{gr-conv}, we have that 
\begin{align*}
\nabla_vU^\ep(x,v) = \int_{\R^{2d}}\nabla_vG\big((x,v),(y,w)\big)\theta^\ep(y)\chi^K(w)dydw.
\end{align*}
Note that, with the function $\chi^1$ defined in \eqref{compact}, we can write
\begin{align*}
\theta^\ep(y) \le \ep^{-d}\|\theta\|_{L^\infty}\chi^1(y).
\end{align*}
Hence, by \eqref{int-bd-gr-grad} (with $\eta=0$), there exists some finite constant $C=C(\K,K,\theta,d)$ such that for any $(x,v)\in\K$, we have
\begin{align*}
|\nabla_vU^\ep(x,v)|\le\ep^{-d}\|\theta\|_{L^\infty}\int_{\R^{2d}}|\nabla_vG\big((x,v),(y,w)\big)|\chi^1(y)\chi^K(w)dydw\le \ep^{-d}C.
\end{align*}
Indeed, the range of integration of $(y,w)$ is in $\B_E((0,0),8K)$, which is inside $\B_L((x,v), K')$ for some $K'<\infty$ depending on $\K,K$, uniformly for all $(x,v)\in\K$.
\end{proof}

\medskip
The following proposition bounds \eqref{mg-1}.  An analogous proof can treat \eqref{mg-2} and prove its negligibility. Indeed, note that the only difference between \eqref{mg-1} and \eqref{mg-2} are their test functions, i.e. instead of $\psi$ we have $\nabla_v\psi$ or instead of $\nabla_v\phi$ we have $\phi$. The proof only used that they are smooth bounded and  they localize the variables $x_i,x_j, v_i,v_j$ in compact sets.
\begin{proposition}
For any finite $T$ and $d\ge1$, we have that 
\begin{align*}
&\lim_{|z|\to0}\limsup_{N\to\infty}\E\int_0^Tdt\frac{1}{N^4}\sum_{i=1}^N\Big|\sum_{j\neq i}\nabla_vH^{\ep,z} \big(x_i(t)-x_j(t), v_i(t)-v_j(t)\big)\phi\big(t, x_i(t),v_i(t)\big)\psi\big(x_j(t),v_j(t)\big)\\
&+\sum_{j\neq i}H^{\ep,z}\big(x_i(t)-x_j(t), v_i(t)-v_j(t)\big)\psi\big(x_j(t),v_j(t)\big)\nabla_v\phi\big(x_i(t),v_i(t)\big)\Big|^2=0.
\end{align*}
\end{proposition}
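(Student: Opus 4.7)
The plan is to reduce the quadratic variation to a bound on a single pair via Cauchy--Schwarz and exchangeability, and then to interpolate between the crude pointwise bound of Lemma \ref{crude} and the $L^{1+\eta}$ integrability of Proposition \ref{ppn:unif-int}. Write, for brevity,
\[
B_{ij}(t):=\nabla_vH^{\ep,z}(\cdots)\,\phi\big(t,x_i(t),v_i(t)\big)\psi\big(x_j(t),v_j(t)\big)+H^{\ep,z}(\cdots)\,\psi\big(x_j(t),v_j(t)\big)\nabla_v\phi\big(x_i(t),v_i(t)\big)
\]
for the integrand, with $(\cdots)=(x_i(t)-x_j(t),v_i(t)-v_j(t))$. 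As noted before Proposition \ref{ppn:double-1}, the free system \eqref{free} a.s.\ dominates the lifespans of the true dynamics, so it suffices to bound the expression over all $i\neq j$ in $\{1,\dots,N\}$ using the i.i.d.\ particles $X_1,\dots,X_N$ of \eqref{free}. Applying the elementary inequality $\big|\sum_{j\neq i}B_{ij}\big|^2\le (N-1)\sum_{j\neq i}|B_{ij}|^2$ and then using exchangeability yields
\[
\E\int_0^T\frac{1}{N^4}\sum_i\Big|\sum_{j\neq i}B_{ij}(t)\Big|^2\,dt\le \frac{(N-1)^2}{N^3}\int_0^T\E|B_{12}(t)|^2\,dt\le \frac{T}{N}\sup_{t\in[0,T]}\E|B_{12}(t)|^2.
\]

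The main step, and the principal obstacle, is to sharpen the crude estimate $\|B_{12}\|_{L^\infty}\le C\ep^{-d}$ from Lemma \ref{crude} enough that after division by $N$ one recovers a bound that vanishes. Simply squaring the crude bound gives $\E|B_{12}|^2\le C\ep^{-2d}$, which by Condition \ref{cond:local} ($\ep^{-d}\le (1+o(1))N$) only yields an $O(1)$ estimate, insufficient for our purpose. Instead I interpolate, writing $|B_{12}|^2=|B_{12}|^{1-\eta}\cdot|B_{12}|^{1+\eta}$ for the small $\eta=\eta(d)>0$ furnished by Proposition \ref{ppn:unif-int}, so that
\[
\E|B_{12}(t)|^2\le \|B_{12}\|_{L^\infty}^{1-\eta}\,\E|B_{12}(t)|^{1+\eta}\le C\ep^{-d(1-\eta)}\,\E|B_{12}(t)|^{1+\eta}.
\]
To show $\E|B_{12}(t)|^{1+\eta}\le C$ uniformly in $\ep$ small and in $|z|$ bounded, I follow the template of Proposition \ref{ppn:double-1}: the density of $X_1(t)-X_2(t)$ is uniformly bounded by $\Gamma$ (Lemma \ref{lem:den}), and the compact supports of $\phi,\psi$ restrict $(x_1,v_1),(x_2,v_2)$ and hence their difference to compact sets, reducing matters to controlling $\iint(|\nabla_vH^{\ep,z}|^{1+\eta}+|H^{\ep,z}|^{1+\eta})\chi^K(x)\chi^K(v)\,dxdv$. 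Starting from the representation \eqref{change} and its analogue for $\nabla_vH^{\ep,z}$, Jensen's inequality applied to the probability measure $\theta^\ep(-\tilde y-x)d\tilde y$ combined with Hölder in the finite measure $\chi^K(w)dw$ dominates these integrals by
\[
C\iiint\big(|G-G_z|^{1+\eta}+|\nabla_vG-\nabla_vG_z|^{1+\eta}\big)\chi^{K'}(\tilde y)\chi^K(w)\chi^K(v)\,d\tilde ydwdv,
\]
(where $G_z$ denotes $G$ evaluated at the shifted argument) which is finite uniformly in $\ep$ and $|z|\le 1$ by Proposition \ref{ppn:unif-int}, applied on the compact control-metric ball containing the support of the integrand.

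Combining the estimates yields $\E|B_{12}(t)|^2\le C\ep^{-d(1-\eta)}$ uniformly in $t\in[0,T]$, and hence the full expression is at most
\[
\frac{CT\ep^{-d(1-\eta)}}{N}\le CT\big(\ep^{-d}/N\big)^{1-\eta}N^{-\eta}\le C'T\,N^{-\eta},
\]
by Condition \ref{cond:local}. This bound is independent of $z$ and tends to zero as $N\to\infty$, so in particular $\lim_{|z|\to0}\limsup_{N\to\infty}$ of the quantity in the proposition equals zero, as required.
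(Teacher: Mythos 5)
Your argument is correct, but it takes a genuinely different route from the paper's. The paper also starts with $\big|\sum_{k=1}^M a_k\big|^2\le M\sum_{k=1}^M|a_k|^2$ (together with $(a+b)^2\le 2a^2+2b^2$) to reduce to $\frac{1}{N^3}\sum_{i,j}|\nabla_vH^{\ep,z}\phi\psi|^2$ and $\frac{1}{N^3}\sum_{i,j}|H^{\ep,z}\psi\nabla_v\phi|^2$, but it then applies the crude bound of Lemma \ref{crude} to only \emph{one} of the two factors, $|H|^2\le C\ep^{-d}|H|\le CN|H|$, which collapses each expression to the form $\frac{1}{N^2}\sum_{i,j}|H^{\ep,z}\cdots|$ (or the analogue with $\nabla_v H^{\ep,z}$). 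The conclusion $\lim_{|z|\to0}\limsup_N=0$ is then obtained by rerunning the argument of Proposition \ref{ppn:double-1}, i.e.\ the Vitali/continuity-of-$G$ step, which genuinely needs the $|z|\to0$ limit. Your interpolation instead factors $|B|^2=|B|^{1-\eta}|B|^{1+\eta}$, applies Lemma \ref{crude} to the $(1-\eta)$-power, and shows $\E|B_{12}(t)|^{1+\eta}$ is bounded uniformly in $\ep$ and $|z|$ directly from Proposition \ref{ppn:unif-int} via Jensen/H\"older through the representation \eqref{change}. This yields the explicit rate $C'TN^{-\eta}$, \emph{uniform in $z$}, so $\limsup_N$ already vanishes for each fixed $z$ and the final $|z|\to0$ limit is vacuous. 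Both proofs use exactly the same two ingredients (the crude $\ep^{-d}$ bound and the $L^{1+\eta}$ Green's function estimates); yours combines them by interpolation, trading the qualitative Vitali argument for a quantitative power-of-$N$ decay. One minor inaccuracy in your motivating remark: plugging the raw $\E|B_{12}|^2\le C\ep^{-2d}$ into $\frac{T}{N}\sup_t\E|B_{12}|^2$ gives $O(N)$, not $O(1)$; this only strengthens the need for the interpolation and has no bearing on the proof itself.
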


\begin{proof}
First, using the elementary inequality $|\sum_{k=1}^Ma_k|^2\le M\sum_{k=1}^M|a_k|^2$, it suffices to bound separately
\begin{align}\label{sub-item-1}
\E\int_0^Tdt\frac{1}{N^3}\sum_{i,j=1}^N\Big|\nabla_vH^{\ep,z} \big(x_i(t)-x_j(t), v_i(t)-v_j(t)\big)\phi\big(t, x_i(t),v_i(t)\big)\psi\big(x_j(t),v_j(t)\big)\Big|^2
\end{align}
and
\begin{align}\label{sub-item-2}
\E\int_0^Tdt\frac{1}{N^3}\sum_{i,j=1}^N\Big|H^{\ep,z}\big(x_i(t)-x_j(t), v_i(t)-v_j(t)\big)\psi\big(x_j(t),v_j(t)\big)\nabla_v\phi\big(t, x_i(t),v_i(t)\big)\Big|^2.
\end{align} 
With the arguments of $H^{\ep,z}$ in a compact set, we can bound one term of the square in \eqref{sub-item-2} by the crude bound \eqref{unif-U} in Lemma \ref{crude}, then using the relation $\ep^{-d}\le N$ in Condition \ref{cond:local}, we get that 
\begin{align*}
\eqref{sub-item-2}\le C(K,d, \theta)\|\phi\|_{C^1}\|\psi\|_{L^\infty} \E\int_0^Tdt\frac{1}{N^2}\sum_{i,j=1}^N\Big|H^{\ep,z}\big(x_i(t)-x_j(t), v_i(t)-v_j(t)\big)\psi\big(x_j(t),v_j(t)\big)\nabla_v\phi\big(t, x_i(t),v_i(t)\big)\Big|.
\end{align*}
This expression is negligible which can be proved with the same argument as Proposition \ref{ppn:double-1}, as it involves the same type of double sum. Only the text functions look different, but all we use of the test functions are their boundedness and that they localize the variable in a compact set.

Turning to \eqref{sub-item-1}, again by \eqref{unif-U} and Condition \ref{cond:local}, it is sufficient to bound
\begin{align*}
\E\int_0^Tdt\frac{1}{N^2}\sum_{i,j=1}^N\Big|\nabla_vH^{\ep,z}\big(x_i(t)-x_j(t), v_i(t)-v_j(t)\big)\psi\big(x_j(t),v_j(t)\big)\phi\big(t, x_i(t),v_i(t)\big)\Big|.
\end{align*}
The proof of the negligibility of the above quantity is analogous to the proof presented in Proposition \ref{ppn:double-1}, just using the bound \eqref{int-bd-gr-grad} for the gradient instead of \eqref{int-bd-gr}, hence we only sketch the idea. Indeed, as in \eqref{change}, we have that 
\begin{align*}
\nabla_vH^{\ep,z}(x,v)=\iint_{\R^{2d}}\big[\nabla_vG\big((0,v),(\wt y,w)\big)-\nabla_vG\big((0,v),(\wt y-z,w)\big)\big]\theta^\ep(-\wt y-x)\chi^K(w)d\wt ydw.
\end{align*}
One can now take $\ep\to0$ to obtain
\begin{align*}
&\limsup_{\ep\to0}\iint_{\R^{2d}}|\nabla_vH^{\ep,z}(x,v)|\chi^K(x)\chi^K(v)\mathsf p^{12}(t,x,v)dxdv\\
&\le\iiint_{\R^{3d}}\big|\nabla_vG\big((0,v),(\wt y,w)\big)-\nabla_vG\big((0,v),(\wt y-z,w)\big)\big|\chi^K(w)\chi^K(-\wt y)\chi^K(v)\mathsf p^{12}(t,-\wt y,v)d\wt ydwdv.
\end{align*}
In order to use Vitali convergence theorem to take $|z|\to0$, we need to check uniform integrability similar to \eqref{vitali}. In this case, the estimate \eqref{int-bd-gr-grad} guarantees the existence of some $\eta=\eta(d)>0$ and $C=C(K,d)$ finite such that 
\begin{align*}
\sup_{|z|\le 1}\iiint_{\R^{3d}}\big|\nabla_vG\big((0,v),(\wt y-z,w)\big)\big|^{1+\eta}\chi^K(w)\chi^K(-\wt y)\chi^K(v)d\wt ydwdv<C.
\end{align*}
\end{proof}

\medskip
The following proposition bounds \eqref{triple-sum-1}. The same proof can treat \eqref{triple-sum-2}, indeed its structure is the same as \eqref{triple-sum-1} except the roles of $\phi$ and $\psi$ are permuted. Since they are both smooth test functions, this fact does not alter the proof in an essential way. For \eqref{triple-sum-3}, note that upon bounding $\|\theta^\ep\|_{L^\infty}\le\ep^{-d}\|\theta\|_{L^\infty}$ and using $\ep^{-d}\le N$, it reduces to Proposition \ref{ppn:double-1}.

\begin{proposition}\label{ppn:triple}
For any finite $T$ and $d\ge1$, we have that 
\begin{align*}
\lim_{|z|\to0}\limsup_{N\to\infty}\E\int_0^Tdt\frac{1}{N^3}\sum_{i\neq j,i\neq  k=1}^N&\theta^\ep\big(x_i(t)-x_k(t)\big)\\
&\cdot\Big|H^{\ep,z}\big(x_i(t)-x_j(t), v_i(t)-v_j(t)\big)\phi\big(t, x_i(t),v_i(t)\big) \psi\big(x_j(t),v_j(t)\big)\Big|=0.
\end{align*}
\end{proposition}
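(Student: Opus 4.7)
The plan is to reduce the triple-sum estimate to the double-sum situation already handled in Proposition \ref{ppn:double-1}, by ``integrating out'' the spectator particle indexed by $k$. Using the $\omega$-wise domination of the true system \eqref{dynamics} by the free system \eqref{free} and the exchangeability of the particles in \eqref{free}, I bound the quantity in the thesis by
\[
\frac{N(N-1)(N-2)}{N^3}\,\E\int_0^T \theta^\ep(x_1(t)-x_3(t))\,\big|H^{\ep,z}(x_1(t)-x_2(t),v_1(t)-v_2(t))\big|\,\big|\phi(t, x_1,v_1)\,\psi(x_2,v_2)\big|\,dt,
\]
so it suffices to show that this single representative expectation vanishes as $|z|\to 0$, uniformly in $N$.

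Since $X_1(t), X_2(t), X_3(t)$ are mutually independent in the free system, the expectation factorizes through the one-particle density $\mathsf p(t,\cdot)$, and by Fubini I perform the $(x_3,v_3)$-integration first. Let $\tilde{\mathsf p}(t,x):=\int_{\R^d}\mathsf p(t,x,v)\,dv$ be the spatial marginal. The crucial point is that $\tilde{\mathsf p}$ is bounded uniformly on $[0,T]\times\R^d$ by a finite constant $C_0=C_0(\Gamma,R,T,d)$: this is not automatic from $\|\mathsf p\|_\infty\le\Gamma$ since $v$ ranges over the unbounded set $\R^d$, but it does follow from the exponential-in-$|v|$ decay estimate \eqref{exp-decay} of Lemma \ref{lem:den}. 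Consequently,
\[
\int_{\R^{2d}}\theta^\ep(x_1-x_3)\,\mathsf p(t,x_3,v_3)\,dx_3\,dv_3 \;=\; \int_{\R^d}\theta^\ep(x_1-x_3)\,\tilde{\mathsf p}(t,x_3)\,dx_3 \;\le\; C_0\int_{\R^d}\theta^\ep \;=\; C_0,
\]
uniformly in $\ep$, $t$, and $x_1$.

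After this decoupling step, the quantity to control is bounded by $C_0$ times
\[
\E\int_0^T\big|H^{\ep,z}(x_1(t)-x_2(t),v_1(t)-v_2(t))\big|\,\big|\phi(t, x_1,v_1)\,\psi(x_2,v_2)\big|\,dt,
\]
which is exactly of the form \eqref{eq:main-1} analyzed in Proposition \ref{ppn:double-1}, only slightly simpler (the $|v_1(t)|$ prefactor of that proof is absent here). I would then finish by repeating verbatim the remainder of that argument: use the Green's function representation \eqref{change} of $H^{\ep,z}$ together with the $x$-translation invariance \eqref{transl}, pass $\ep\to 0$ under the integral against the uniformly bounded joint density $\mathsf p^{12}$, and invoke Vitali's convergence theorem to send $|z|\to 0$, with the required uniform $L^{1+\eta}$ integrability supplied by \eqref{int-bd-gr} of Proposition \ref{ppn:unif-int}.

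The main (and essentially only non-routine) obstacle I anticipate is the decoupling step itself: one must integrate the near-singular function $\theta^\ep(x_1-x_3)$ against the $(x_3,v_3)$-marginal of the free particle, and the $L^\infty$ bound on the full joint density $\mathsf p$ alone is insufficient since the velocity variable is unbounded. This is precisely where the finer pointwise density estimate \eqref{exp-decay} from Lemma \ref{lem:den}, with its exponential tail in $|v|$ that makes the $v$-marginal finite, becomes indispensable.
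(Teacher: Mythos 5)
Your overall strategy coincides with the paper's: split off the spectator index $k$, integrate $\theta^\ep$ against the one-particle law using the exponential $|v|$-tail from \eqref{exp-decay} to make the spatial marginal bounded, and then recognize the remaining double-sum expectation as the one already treated in Proposition \ref{ppn:double-1}. You also correctly pinpoint why the bare $L^\infty$ bound $\|\mathsf p\|_\infty\le\Gamma$ is insufficient and why \eqref{exp-decay} is the crucial input for the decoupling step.

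There is one genuine gap. The index set in the proposition is $\{i\neq j,\, i\neq k\}$ and does \emph{not} exclude $j=k$, whereas your representative bound
\[
\frac{N(N-1)(N-2)}{N^3}\,\E\int_0^T \theta^\ep(x_1-x_3)\,\big|H^{\ep,z}(x_1-x_2,v_1-v_2)\big|\,\big|\phi\,\psi\big|\,dt
\]
counts only triples of pairwise-distinct indices. You must treat the $j=k$ diagonal separately: there are $N(N-1)$ such terms, and in them $\theta^\ep$ and $H^{\ep,z}$ share the argument $(x_i-x_j,v_i-v_j)$, so the decoupling trick of integrating out a ``spectator'' does not apply since there is no third independent particle to integrate out. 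The paper handles this by bounding $\|\theta^\ep\|_{L^\infty}\le\ep^{-d}\|\theta\|_{L^\infty}\le N\|\theta\|_{L^\infty}$ (Condition \ref{cond:local}), which turns the $\frac{1}{N^3}\sum_{i\neq j}$ sum into a $\frac{1}{N^2}\sum_{i\neq j}$ sum and reduces directly to Proposition \ref{ppn:double-1}. Once you add that step, your argument is complete and essentially identical to the paper's.

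A minor remark on the endgame: after decoupling particle $3$, you pass immediately to the joint density $\mathsf p^{12}$ of $X_1(t)-X_2(t)$ and quote the analysis of \eqref{eq:main-1}; the paper instead keeps the product of one-particle densities $\mathsf p(t,x_1,v_1)\,\mathsf p(t,x_2,v_2)$ and applies a change of variables $\wt x=x_1-x_2$, $\wt v=v_1-v_2$ before invoking the Green's function estimates. Both routes are correct and lead to the same Vitali/uniform-integrability argument via \eqref{int-bd-gr}; yours is marginally more economical.
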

\begin{proof}
First, we note that for $j=k$, we can bound $\|\theta^\ep\|_{L^\infty}\le\ep^{-d}\|\theta\|_{L^\infty}$ and with the condition $\ep^{-d}\le N$, we have a contribution 
\begin{align*}
&\E\int_0^Tdt\frac{1}{N^3}\sum_{i\neq j=1}^N\theta^\ep\big(x_i(t)-x_j(t)\big)\Big|H^{\ep,z}\big(x_i(t)-x_j(t), v_i(t)-v_j(t)\big)\phi\big(t, x_i(t),v_i(t)\big) \psi\big(x_j(t),v_j(t)\big)\Big|\\
&\le\E\int_0^Tdt\frac{1}{N^2}\sum_{i\neq j=1}^N\Big|H^{\ep,z}\big(x_i(t)-x_j(t), v_i(t)-v_j(t)\big)\phi\big(t, x_i(t),v_i(t)\big) \psi\big(x_j(t),v_j(t)\big)\Big|,
\end{align*}
whose negligibility follows from Proposition \ref{ppn:double-1}. Hence, it suffices to consider the contribution of $i\neq j\neq k$ all distinct. By similar considerations as in the beginning of the proof of Proposition \ref{ppn:double-1}, it suffices to bound each term of the form
\begin{align*}
\E\int_0^Tdt\;\theta^\ep\big(x_1(t)-x_3(t)\big)\Big|H^{\ep,z}\big(x_1(t)-x_2(t), v_1(t)-v_2(t)\big)\Big|\chi^K(x_1(t))\chi^K(x_2(t))\chi^K(v_1(t))\chi^K(v_2(t)).
\end{align*}
By independence, the joint density of $(x_i(t),v_i(t))_{i=1}^3$ is given by $\prod_{i=1}^3\mathsf p(t,x_i,v_i)$. We write the above expectation explicitly 
\begin{align*}
\int_0^Tdt \int_{\R^{6d}}\theta^\ep(x_1-x_3)\Big|H^{\ep,z}\big(x_1-x_2, v_1-v_2\big)\Big|\chi^K(x_1))\chi^K(x_2)\chi^K(v_1)\chi^K(v_2)\prod_{i=1}^3\mathsf p(t,x_i,v_i)dx_idv_i.
\end{align*}
By \eqref{exp-decay} estimate for the one-particle density, we integrate first in $x_3,v_3$
\begin{align*}
&\int_{\R^{2d}}\theta^\ep(x_1-x_3)\mathsf p(t,x_3,v_3)dx_3dv_3\\
&\le \int_{\R^{2d}}\theta^\ep(x_1-x_3)\big[Ce^{-|v|/C}1_{\{|v|\ge C\}}+\Gamma1_{\{|v|< C\}}\big]dx_3dv_3 \le C'(\Gamma, R, T,d),
\end{align*}
where we used that $\int\theta^\ep(x_1-x_3)dx_3=1$ for any $x_1$. We are left to bound 
\begin{align*}
\int_0^Tdt \int_{\R^{4d}}\Big|H^{\ep,z}\big(x_1-x_2, v_1-v_2\big)\Big|\chi^K(x_1))\chi^K(x_2)\chi^K(v_1)\chi^K(v_2)\prod_{i=1}^2\mathsf p(t,x_i,v_i)dx_idv_i.
\end{align*}
Recall the definition of $\chi^K$ \eqref{compact} that $x_1,x_2,v_1,v_2\in\text{supp}(\chi^K)$ implies that $x_1-x_2, v_1-v_2\in\text{supp}(\chi^{4K})$, and $\chi^{4K}(x_1-x_2)=\chi^{4K}(v_1-v_2)=1$. Also $\mathsf p(t,x_i,v_i)\le\Gamma$, $i=1,2$, hence the preceding display can be bnounded above by
\begin{align*}
&T\Gamma^2\int_{\R^{4d}}\Big|H^{\ep,z}\big(x_1-x_2, v_1-v_2\big)\Big|\chi^{4K}(x_1-x_2)\chi^{4K}(v_1-v_2)\chi^K(x_1))\chi^K(x_2)\chi^K(v_1)\chi^K(v_2)dx_1dv_1dx_2dv_2\\
&\stackrel[\wt v=v_1-v_2]{\wt x=x_1-x_2}{=}T\Gamma^2\int_{\R^{4d}}\Big|H^{\ep,z}\big(\wt x, \wt v\big)\Big|\chi^{4K}(\wt x)\chi^{4K}(\wt v)\chi^K(\wt x+x_2))\chi^K(x_2)\chi^K(\wt v+v_2)\chi^K(v_2)d\wt xd\wt vdx_2dv_2\\
&\le T\Gamma^2\int_{\R^{4d}}\Big|H^{\ep,z}\big(\wt x, \wt v\big)\Big|\chi^{4K}(\wt x)\chi^{4K}(\wt v)\chi^K(x_2)\chi^K(v_2)d\wt xd\wt vdx_2dv_2\\
&\le C(K) T\Gamma^2\iint_{\R^{2d}}\Big|H^{\ep,z}\big(\wt x, \wt v\big)\Big|\chi^{4K}(\wt x)\chi^{4K}(\wt v)d\wt xd\wt v,
\end{align*}
where the second line is due to a change of variable, the third line due to $\chi^K\in[0,1]$ and the last line is because the integration in variables $x_2,v_2$ is over the compact support of $\chi^K$. 

The resulting expression can be treated in the same way as \eqref{eq:main-1} in the proof of Proposition \ref{ppn:double-1}, and its negligibility in the limit $N\to\infty$ followed by $|z|\to0$ follows therefrom.
\end{proof}

\medskip
Finally, we remark that the control of \eqref{mg-triple-1}, \eqref{mg-triple-2}, \eqref{mg-triple-3} reduces to Propositions \ref{ppn:triple} and  \ref{ppn:double-1}. Indeed, take \eqref{mg-triple-1} for instance, using the elementary inequality $|\sum_{k=1}^Ma_k|^2\le M\sum_{k=1}^M|a_k|^2$ and the bound \eqref{unif-U}, we have that 
\begin{align*}
\eqref{mg-triple-1}&=\E\int_0^Tdt\sum_{i\neq k=1}^N\frac{1}{N^5}\theta^\ep\big(x_i^N(t)-x_k^N(t)\big)\Big|\sum_{j:j\neq i,k}H^{\ep,z}\big(x_i^N(t)-x_j^N(t), v_i^N(t)-v_j^N(t)\big)\phi \psi\Big|^2\\
&\le \E\int_0^Tdt\sum_{i\neq j\neq  k=1}^N\frac{1}{N^4}\theta^\ep\big(x_i^N(t)-x_k^N(t)\big)\Big|H^{\ep,z}\big(x_i^N(t)-x_j^N(t), v_i^N(t)-v_j^N(t)\big)\phi \psi\Big|^2\\
&\le C\|\phi\|_{L^\infty}\|\psi\|_{L^\infty}\E\int_0^Tdt\sum_{i\neq j\neq  k=1}^N\frac{1}{N^3}\theta^\ep\big(x_i^N(t)-x_k^N(t)\big)\Big|H^{\ep,z}\big(x_i^N(t)-x_j^N(t), v_i^N(t)-v_j^N(t)\big)\phi \psi\Big|.
\end{align*}
It then reduces to Propositions \ref{ppn:triple}.

\medskip
We have controlled all the negligible terms \eqref{ini-term}, \eqref{double-sum-0}-\eqref{mg-terms}, and by the discussions in Section \ref{sec:ito}, we have established Proposition \ref{ppn:shift}.

\section{Tightness and subsequential convergence}\label{sec:tight}
We show that for any finite $T$, the laws of the sequence $\{\mu_t^N(dx,dv):t\in[0,T]\}_{N\in\N}$ \eqref{empirical} of $\cD_T(\cM_{+,1})$-valued random variables is tight, hence weakly relatively compact by Prokhorov's theorem, where $\cD_T(\cM_{+,1})=\cD([0,T];\cM_{+,1}(\R^{2d}))$ is the Skorohod space \eqref{skorohod}. The following is a sufficient condition.
\begin{proposition}\label{ppn:tight}
If (a). there exists constant finite $C_1$ such that for any $N\in\N$ and $t\in[0,T]$ we have that 
\begin{align}\label{AA-1}
\E\int_{\R^{2d}}\big(|x|+|v|\big)\mu_t^N(dx,dv)\le C_1;
\end{align}
and (b). there exists some $\beta>0$ such that for every $\phi\in C_c^\infty(\R^{2d})$ there exists $C(\phi)<\infty$ such that for all $N$ and all stopping times $\tau\le T$, $\gamma>0$ small enough and $0<\sigma<\gamma$,
\begin{align}\label{AA-2}
\E\big|\langle \mu_{\tau+\sigma}^N,\, \phi\rangle - \langle \mu_\tau^N,\, \phi\rangle\big|\le C(\phi)\,\gamma^\beta;
\end{align}
then the laws $\{\mathsf P_N\}_N$ of the sequence $\{\mu_t^N:t\in[0,T]\}_{N\in\N}$ is tight.
\end{proposition}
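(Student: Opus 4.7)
The plan is to apply Jakubowski's tightness criterion for $\cD([0,T]; \cM_{+,1}(\R^{2d}))$, which reduces the claim to the combination of \textbf{(J1)} uniform compact containment of paths in $\cM_{+,1}(\R^{2d})$, and \textbf{(J2)} tightness of the one-dimensional projections $\{\langle \mu^N_\cdot, \phi\rangle\}_N$ in $\cD([0,T]; \R)$ for each $\phi$ in a countable point-separating family $\Phi \subset C_c^\infty(\R^{2d})$ (which exists by separability of $C_c^\infty$ under uniform convergence). Condition (a) feeds (J1) through Prokhorov's characterization of relatively compact families in $\cM_{+,1}$ as uniformly tight, whereas (b) feeds (J2) through Aldous' stopping-time criterion.

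For \textbf{(J2)}, fix $\phi \in \Phi$. Each projection is deterministically bounded by $\|\phi\|_\infty$, so only Aldous' modulus-of-continuity condition needs verification: given $\epsilon, \eta > 0$, condition (b) and Markov give
$$\P\bigl(|\langle \mu_{\tau+\sigma}^N - \mu_\tau^N, \phi\rangle| > \epsilon\bigr) \le C(\phi)\gamma^\beta/\epsilon$$
uniformly over $N$, stopping times $\tau \le T$ and $0 \le \sigma \le \gamma$. Taking $\gamma$ small makes this $\le \eta$, yielding tightness of $\{\langle \mu^N_\cdot, \phi\rangle\}_N$ in $\cD([0,T]; \R)$.

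For \textbf{(J1)}, I must produce for each $\kappa > 0$ a radius $R_\kappa$ (independent of $N$) with $\P\bigl(\sup_{t} \mu_t^N(\{|x|+|v|>R_\kappa\}) > \kappa\bigr) \le \kappa$. Pick $\chi_R \in C_c^\infty$ with $0 \le \chi_R \le 1$, $\chi_R \equiv 1$ on $\{|x|+|v|\le R\}$, $\chi_R \equiv 0$ off $\{|x|+|v| \le 2R\}$, and discretize $[0,T]$ by a grid $\{t_k\}_{k=0}^n$ of mesh $\delta$, $n = \lceil T/\delta\rceil$. Then
$$\sup_{t} \mu_t^N\bigl(\{|x|+|v|>2R\}\bigr) \le \max_k \mu_{t_k}^N\bigl(\{|x|+|v|>R\}\bigr) + \max_k \sup_{t \in [t_{k-1},t_k]}\bigl|\langle \mu_t^N - \mu_{t_{k-1}}^N, \chi_R\rangle\bigr|.$$
The first maximum is controlled by (a), Markov, and a union bound over the $n$ grid points, giving a probability bound of order $nC_1/(R\kappa)$. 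For the second, apply (b) at the stopping time $\rho_k$ at which $|\langle \mu_{t_{k-1}+\cdot}^N - \mu_{t_{k-1}}^N, \chi_R\rangle|$ first exceeds $\kappa/2$ in $[0,\delta]$ (setting $\rho_k = \delta$ otherwise); this standard trick converts the fixed-pair form of (b) into a sup-in-$\sigma$ estimate, giving after a union bound a probability of order $nC(\chi_R)\delta^\beta/\kappa$. Choosing $\delta$ small and $R$ large so that both bounds fall below $\kappa/2$ yields the claim; applying it along $\kappa_m = 2^{-m}\eta$, $m \in \N$, and intersecting produces a single compact $K_\eta \subset \cM_{+,1}$ with $\P(\mu^N_t \in K_\eta \text{ for all } t \in [0,T]) \ge 1-\eta$, verifying (J1).

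\textbf{Main obstacle.} The principal subtlety is the upgrade in (J1) from (a)'s pointwise-in-$t$ first-moment bound to a bound on $\sup_{t \in [0,T]}$, which is what Jakubowski's criterion strictly demands; this is precisely where (a) and (b) must be used together via the discretize-and-interpolate argument sketched above. A minor but standard technicality is extracting a sup-in-$\sigma$ estimate from the fixed-pair form of (b), which is handled via the first-exit stopping time $\rho_k$. With (J1) and (J2) both in hand, Jakubowski's theorem yields tightness of $\{\mathsf P_N\}_N$ in $\cD_T(\cM_{+,1})$, and Prokhorov's theorem gives weak relative compactness.
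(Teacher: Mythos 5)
You take a genuinely different route from the paper. The paper verifies the Kipnis--Landim tightness criterion (\cite[Theorem 4.1.3, Proposition 4.1.6]{KL}), which only requires \emph{marginal} tightness of $\{\mu^N_t\}_N$ in $\cM_{+,1}(\R^{2d})$ at each fixed $t$ --- immediate from (a) via Markov, since first-moment-bounded families of subprobability measures on $\R^{2d}$ are weakly relatively compact --- together with the Aldous stopping-time condition for the metric $\delta$ on $\cM_{+,1}$, verified from (b) by truncating the series defining $\delta$ and applying Markov. No uniform-in-$t$ compact containment is ever needed. You instead invoke Jakubowski's criterion, whose condition (J1) does demand $\sup_{t\in[0,T]}$ compact containment of paths; this is \emph{not} implied by (a) alone, which is what forces your discretize-and-interpolate argument.

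That argument has a genuine gap where you claim the first-exit time $\rho_k$ converts (b) into a sup-in-$\sigma$ estimate. Writing $Y_\sigma := \langle\mu_{t_{k-1}+\sigma}^N - \mu_{t_{k-1}}^N, \chi_R\rangle$ and $\rho := \inf\{\sigma : |Y_\sigma| \ge \kappa/2\} \wedge \delta$, you need to control $\E|Y_{\rho}|$, but (b) only bounds $\E|Y_\sigma|$ for \emph{deterministic} $\sigma$. The standard workaround $Y_\rho = Y_\delta - (Y_\delta - Y_\rho)$ controls the first term by (b) with $\sigma=\delta$, but the second term $\langle \mu_{t_k}^N - \mu_{t_{k-1}+\rho}^N, \chi_R\rangle$ has the stopping time $\tau = t_{k-1}+\rho$ paired with the \emph{random}, $\cF_\tau$-measurable increment $\delta - \rho$, which hypothesis (b) does not cover. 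Closing this gap amounts to reproving a version of Aldous' lemma --- precisely the content the KL criterion packages for you. A secondary, unstated point: your inequality $\mu_t^N(\{|x|+|v|>2R\}) \le \mu_{t_{k-1}}^N(\{|x|+|v|>R\}) + |\langle\mu_t^N-\mu_{t_{k-1}}^N,\chi_R\rangle|$ requires that the total mass $\mu_t^N(\R^{2d})$ be non-increasing in $t$ (true here because the dynamics is purely annihilating), a model fact not recorded in (a)--(b). The cleanest repair is to drop (J1) entirely and use the Aldous/KL criterion as the paper does; hypotheses (a) and (b) are tailored exactly for it.
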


\begin{proof}
Tightness of a collection of laws on a Skorohod space $\cD([0,T]; E)$, where $E$ is a complete separable metric space, is well-studied in the literature; it builds on the characterization of compact sets in these spaces, e.g.  \cite[Theorem 3.12.3]{PB}, \cite[Theorem 3.6.3]{EK}, \cite[Proposition 4.1.2]{KL}. 

Specializing to our case, $E=\cM_{+,1}(\R^{2d})$, endowed with the following metric that metrizes weak convergence, cf. \cite[pp. 49, Eq. (1.1)]{KL}, \cite[Section 5.1]{FH2}
\begin{align}\label{metric-weak}
\delta(\mu, \nu):=\sum_{k=1}^\infty \frac{1}{2^k}\left[\left|\langle\mu, \phi_k\rangle- \langle\nu, \phi_k\rangle\right|\wedge 1\right],\quad \mu, \nu \in \cM_{+.1}(\R^{2d}),
\end{align}
where $\{\phi_k\}_{k=1}^\infty$ is a fixed dense sequence in $C_c^\infty(\R^{2d})$.
Recall \cite[Theorem 4.1.3, Proposition 4.1.6]{KL} that a sequence of probability law $\{\mathsf P_N\}_N$ on $\cD_T(\cM_{+,1})$ is relatively compact if the following two conditions hold: (a'). For every $t>0$ and $\iota>0$, there exists a compact set $K(t,\iota)\subset\cM_{+,1}$ such that 
\begin{align}\label{KL-1}
\sup_N\mathsf P_N\left(\mu_t\not\in K(t,\iota)\right)<\iota.
\end{align}
(b'). For every $\iota>0$, 
\begin{align}\label{KL-2}
\lim_{\gamma\to0}\limsup_{N\to\infty}\sup_{\tau\in\mathcal T_T, \, \sigma\le \gamma}\mathsf P_N\left(\delta(\mu_\tau, \mu_{\tau+\sigma})>\iota\right)=0,
\end{align}
where $\mathcal T_T$ is the set of stopping times $\le T$.

Now recall that given $R<\infty$, the set
\[
\cK^{R}:=\left\{\mu\in \cM_{+,1}(\R^{2d}):\, \int_{\R^{2d}}\left(|x|+|v|\right)\mu(dx,dv)\le R\right\}
\]
is relatively compact in $\cM_{+,1}(\R^{2d})$, since it consists of a tight collection of subprobability measures on $\R^{2d}$. Indeed, for any $A>0$ we have that 
\[
\sup_{\mu\in \cK^{R}}\mu\left((x,v)\in\R^{2d}: |x|+|v|> A\right)\le \sup_{\mu\in \cK^{R}}\frac{1}{A}\int_{\R^{2d}}\left(|x|+|v|\right)\mu(dx,dv)\le \frac{R}{A}.
\]
Therefore, condition \eqref{AA-1} implies that 
\[
\mathsf P_N\left(\mu_t\not\in \ovl{\cK^R}\right)\le \P\left(\int_{\R^{2d}}\left(|x|+|v|\right)\mu(dx,dv)>R\right)\le \frac{1}{R}\E\int_{\R^{2d}}\left(|x|+|v|\right)\mu(dx,dv)\le \frac{C_1}{R},
\]
by Markov's inequality, and the \abbr{RHS} can be made as small as one wants, uniformly in $N$ (and $t$) upon choosing $R$ large enough. This yields condition \eqref{KL-1}.

Turning to consider \eqref{AA-2}, let us denote $c_k=C(\phi_k)$ the constant on the \abbr{RHS} of \eqref{AA-2}. Given any $\iota>0$, let $k_0=k_0(\iota)$ be the largest integer $k'$ such that $\sum_{k=k'}^\infty 2^{-k}\ge \iota/2$. By the definition of \eqref{metric-weak}, we have that 
\begin{align*}
\delta\left(\mu_\tau, \mu_{\tau+\sigma}\right)\le \sum_{k=1}^{k_0}\frac{1}{2^k}\left|\langle\mu_\tau, \phi_k\rangle- \langle\mu_{\tau+\sigma}, \phi_k\rangle\right|+\frac{\iota}{2}
\end{align*}
Hence, 
\begin{align*}
\mathsf P_N\left(\delta\left(\mu_\tau, \mu_{\tau+\sigma}\right)>\iota\right)&\le \mathsf P_N\left(\sum_{k=1}^{k_0}\frac{1}{2^k}\left|\langle\mu_\tau, \phi_k\rangle- \langle\mu_{\tau+\sigma}, \phi_k\rangle\right|>\frac{\iota}{2}\right)\\
&\le \sum_{k=1}^{k_0}\mathsf P_N\left(\left|\langle\mu_\tau, \phi_k\rangle- \langle\mu_{\tau+\sigma}, \phi_k\rangle\right|>\frac{\iota}{2}\right).
\end{align*}
By \eqref{AA-2} and Markov's inequality, for every $k\le k_0$ and $\tau\in\cT_t$, $\sigma\le\gamma$,
\[
\mathsf P_N\left(\left|\langle\mu_\tau- \phi_k\rangle-\langle\mu_{\tau+\sigma}, \phi_k\rangle\right|>\frac{\iota}{2}\right)\le \frac{2}{\iota}\E\left|\langle\mu^N_\tau, \phi_k\rangle-\langle\mu^N_{\tau+\sigma}, \phi_k\rangle\right|\le \frac{2c_k\gamma^\beta}{\iota}.
\]
Given any $\lambda>0$, there exists $\gamma_0=\gamma_0(\lambda,\iota, c_1,..., c_{k_0})>0$ such that $2c_k\gamma^\beta/\iota<\lambda/2^k$ for all $k\le k_0$ and $\gamma\in(0,\gamma_0)$. Thus, for all $N$ and $\tau\in\cT_T$, $\sigma\le \gamma$, $\gamma<\gamma_0$, we have that 
\[
\mathsf P_N\left(\delta\left(\mu_\tau, \mu_{\tau+\sigma}\right)>\iota\right)\le\sum_{k=1}^{k_0}\frac{\lambda}{2^k}< \lambda.
\]
Since $\lambda>0$ is arbitrary, we get \eqref{KL-2}. This completes the proof that \eqref{AA-1}-\eqref{AA-2} is a sufficient condition for the sequence of laws $\mathsf P_N$ to be tight.
\end{proof}

\medskip
Now we check the two conditions in our case. Despite that \eqref{AA-2} involves stopping times, with proper conditioning on the stopped filtration at $\tau$ and using the strong Markov property, it is enough to write the proof for deterministic times $s,t$.

For (a), by the exchangeability of independent particles $\{x_i(t), v_i(t)\}_{i=1}^N$ \eqref{free} whose trajectories coincide with the active particles in the true system $\{x_i^N(t)\}_{i=1}^{N(t)}$ \eqref{dynamics}, we have that 
\begin{align*}
\E\int_{\R^{2d}}\big(|x|+|v|\big)\mu_t^N(dx,dv)&=\frac{1}{N}\sum_{i=1}^{N(t)}\E\big(|x^N_i(t)|+|v_i^N(t)|\big)\\
&\le \frac{1}{N}\sum_{i=1}^{N}\E\big(|x_i(t)|+|v_i(t)|\big)=\E\big(|x_1(t)|+|v_1(t)|\big).
\end{align*}
Since 
\begin{align*}
(x_1(t),v_1(t))=\big(x_1(0)+v_1(0)t+\int_0^tB_1(s)ds, \, v_1(0)+B_1(t)\big)
\end{align*}
for any $t\in[0,T]$ and $(x_1(0),v_1(0))$ is compactly supported in $\B_E(0,R)$ by Condition \ref{cond:ini}, we have that 
\begin{align*}
\E|v_1(t)|&\le R+\E|B_1(t)|\le R+c_d\sqrt{T}\\
\E|x_1(t)|&\le R+RT+\E\int_0^t|B_1(s)|ds\le R(1+T)+c_dT^{3/2}.
\end{align*}
For (b), by \eqref{eq-for-measure} for any $0\le s<t\le T$ we have that 
\begin{align*}
\E\big|\langle \mu_t^N,\, \phi\rangle - \langle \mu_s^N,\, \phi\rangle\big|
&\le \int_s^t\big|v\cdot\nabla_x\phi+\frac{1}{2}\Delta_v\phi, \mu_u^N\rangle\big|du\\
&+\E\Big|\int_s^t\frac{1}{N^2}\sum_{i\in\cN(u)}\sum_{j:j\neq i}\theta^\ep\big(x_i^N(u)-x_j^N(u)\big)\big[\phi\big(x_i^N(u),v_i^N(u)\big)+\phi\big(x_j^N(u),v_j^N(u)\big)\big] \, du\Big|\\
&+\big(\E|M_t^{\phi,D}-M_s^{\phi,D}|^2\big)^{1/2}+\big(\E|M_t^{\phi,J}-M_s^{\phi,J}|^2\big)^{1/2},
\end{align*}
where, as in the analysis of Section \ref{sec:emp} we can bound
\begin{align*}
\int_s^t\big|v\cdot\nabla_x\phi+\frac{1}{2}\Delta_v\phi, \mu_u^N\rangle\big|du&\le (K^\phi\|\phi\|_{C^1}+\|\phi\|_{C^2})|t-s|,\\
\E|M_t^{\phi,D}-M_s^{\phi,D}|^2&\le \frac{1}{N^2}\E\int_s^t\sum_{i\in\cN(u)}|\nabla_{v_i}\phi(x_i^N(u),v_i^N(u))|^2du \le \frac{\|\phi\|_{C^1}^2}{N}|t-s|,\\
\frac{1}{4}\E|M_t^{\phi,J}-M_s^{\phi,J}|^2&\le \frac{\|\phi\|_{L^\infty}^2}{N^3}\E\int_s^t\sum_{i\in\cN(u)}\sum_{j:j\neq i}\theta^\ep(x_i^N(u)-x_j^N(u))du\\
&\le \frac{\ep^{-d}\|\theta\|_{L^\infty}\|\phi\|_{L^\infty}^2}{N}|t-s|\le \|\theta\|_{L^\infty}\|\phi\|_{L^\infty}^2|t-s|,
\end{align*}
where $K^\phi$ denotes the maximal radius of the compact support of $\phi$. Further, upon dominating by the independent system, by \eqref{exp-decay} we have that
\begin{align*}
&\E\Big|\int_s^t\frac{1}{N^2}\sum_{i\in\cN(u)}\sum_{j:j\neq i}\theta^\ep\big(x_i^N(u)-x_j^N(u)\big)\phi\big(x_i^N(u),v_i^N(u)\big)\, du\Big|\\
&\le \E\int_s^t\theta^\ep(x_1(u)-x_2(u))|\phi(x_1(u),v_1(u))|du\\
&=\Gamma|t-s|\int_{\R^{4d}}\theta^\ep(x_1-x_2)|\phi(x_1,v_1)\big[Ce^{-|v_2|/C}1_{\{|v_2|\ge C\}}+\Gamma1_{\{|v_2|< C\}}\big]dx_1dx_2dv_1dv_2\\
&\le C(\Gamma,R,\phi)|t-s|,
\end{align*}
due to $\int \theta^\ep(x_1-x_2) dx_2=1$ for any $x_1$ and then $\phi(x_1,v_1)$ has compact support. Thus, we verified (b) with $\beta=1/2$.

\medskip
We now show that any weak subsequential limit of $\{\mu_t^N:t\in[0,T]\}$ is supported on the subset of $\cD_T(\cM_{+,1})$ that are absolutely continuous with respect to Lebesgue measure of $\R^{2d}$, for every $t\in[0,T]$, and its density is dominated from above by $\mathsf p(t,x,v)$ of \eqref{exp-decay}. To this end, we consider also the empirical measure of the independent system \eqref{free}:
\begin{align*}
\mu_t^{N,f}(dx,dv):=\frac{1}{N}\sum_{i=1}^N\delta_{\left(x_i(t), v_i(t)\right)}(dx, dv), \quad t\in[0,T].
\end{align*}
Since the continuous paths $\left(x_i(t),v_i(t), t\ge0\right)_{i=1}^\infty$ are independent and identically distributed, by Varadarajan's theorem (\cite[Theorem 11.4.1]{Du}), a.s. for every $t\in[0,T]$
\begin{align*}
\mu_t^{N,f}(dx,dv) \overset{N\to\infty}{\to} \ovl{\mu}_t^{f}(dx,dv):=\mathsf p(t,x,v)dxdv,
\end{align*}
where $\mathsf p(t,x,v)$ is the one-particle density \eqref{evol-den}, and $\ovl{\mu}_t^{f}$ is deterministic. Consider now the joint process of empirical measures
\begin{align*}
\left\{\mu_t^N,\mu_t^{N,f}: t\in[0,T]\right\}_{N\in\N}.
\end{align*}
Fix any weak subseqential limit $\ovl \mu_t$ of $\{\mu_t^N\}_N$ along some subsequences $\{N_k\}_k$. Since $\{\mu_t^{N,f}\}$ converges a.s. to $\ovl \mu_t^{f}$ along the full sequence, we have that the pair converges weakly along $\{N_k\}$:
\begin{align}\label{represent}
\big(\mu_t^{N_k},\mu_t^{N_k,f}: t\in[0,T]\big)\overset{k\to\infty}{\to} \big(\ovl \mu_t, \ovl\mu_t^f: t\in[0,T]\big).
\end{align}
By Skorohod's representation theorem, there exists an auxiliary probability space $(\wt\Omega, \wt \cF, \wt \P)$ and a sequence of random variables defined thereon with the same distributions as the ones in \eqref{represent} (by an abuse of notation we denote hereafter by the same letters) such that the convergence in \eqref{represent} is a.s. By the domination of paths between true and independent systems, a.s. for any open set $A\subset\R^{2d}$, $t\ge0$ and $k\in\N$,
\begin{align*}
0\le\mu_t^{N_k}(A)\le \mu_t^{N_k,f}(A).
\end{align*}
As $k\to\infty$, convergence in Skorohod topology implies that a.s. for any open set $A$, $t\in[0,T]$,
\begin{align*}
0\le\ovl{\mu}_t(A)\le \ovl{\mu}_t^{f}(A).
\end{align*}
Approximating a Borel set $B\subset\R^{2d}$ by a sequence of open sets $A_\ell\downarrow B$, we have that a.s. for any Borel sets $B$ and $t\in[0,T]$,
\begin{align*}
0\le\ovl{\mu}_t(B)\le \ovl{\mu}_t^{f}(B).
\end{align*}
Since for every $t$, $\ovl{\mu}_t^{f}$ is absolutely continuous with respect to Lebesgue measure, the same holds for $\ovl{\mu}_t$, a.s. Further, the density of $\ovl{\mu}_t^{f}$ is a.s. dominated by $\mathsf p(t,x,v)$, the density of $\ovl{\mu}_t^{f}$. Hence, we have proven the following proposition.

\begin{proposition}\label{ppn:subseq}
For any finite $T$ and $d\ge1$, the sequence $\{\mu_t^N(dx,dv):t\in[0,T]\}_{N\in\N}$ \eqref{empirical} of $\cD_T(\cM_{+,1})$-valued random variables is weakly relatively compact, and any subsequential limit law is concentrated on absolutely continuous paths, with nonnegative densities bounded above by $\mathsf p(t,x,v)$ of \eqref{exp-decay}.
\end{proposition}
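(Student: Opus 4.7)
The plan is to verify the two sufficient conditions of Proposition \ref{ppn:tight} for tightness, and then use a pathwise domination by the non-interacting system \eqref{free} together with Skorohod's representation to pin down the structure of any subsequential limit.

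For \eqref{AA-1}, the a.s.\ coupling noted at the start of Section \ref{sec:control} --- active indices in the true system have trajectories coinciding with those of \eqref{free} --- combined with exchangeability bounds the empirical first moment by $\E(|x_1(t)|+|v_1(t)|)$, which is uniformly finite on $[0,T]$ thanks to Condition \ref{cond:ini} and standard Brownian moment estimates applied to $x_1(t)=x_1(0)+v_1(0)t+\int_0^t B_1(s)\,ds$, $v_1(t)=v_1(0)+B_1(t)$. For \eqref{AA-2}, I would apply the identity \eqref{eq-for-measure} on $[s,t]$, reducing stopping-time to deterministic increments via the strong Markov property and conditioning on $\cF_\tau$: the drift/Laplacian piece contributes $O(|t-s|)$ using compact support of $\phi$; the diffusion and jump martingales contribute $O(|t-s|^{1/2}/N^{1/2})$ and $O(|t-s|^{1/2})$ respectively via quadratic-variation bounds of the form \eqref{mg-vanish-1}--\eqref{mg-vanish-2}, using $\ep^{-d}/N\le 1$ from Condition \ref{cond:local}; the nonlinear jump piece, after domination by \eqref{free} and exchangeability, reduces to $\E\int_s^t\theta^\ep(x_1(u)-x_2(u))|\phi(x_1(u),v_1(u))|\,du$, which is controlled by $C(\phi)|t-s|$ using $\int\theta^\ep\,dx_2=1$ and the exponential velocity tail \eqref{exp-decay} from Lemma \ref{lem:den}. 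Collecting yields \eqref{AA-2} with $\beta=1/2$.

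For the second assertion, I would introduce the empirical measure $\mu_t^{N,f}(dx,dv):=N^{-1}\sum_{i=1}^N\delta_{(x_i(t),v_i(t))}$ of the fully independent system. Since the trajectories are i.i.d., Varadarajan's theorem gives $\mu_t^{N,f}\to\mathsf p(t,x,v)\,dx\,dv$ a.s.\ for every $t\in[0,T]$, where the limit is deterministic. Extracting a joint weak subsequential limit of $(\mu_t^{N_k},\mu_t^{N_k,f})$ and invoking Skorohod's representation to realize the convergence a.s.\ on an auxiliary probability space, the pathwise inequality $\mu_t^{N}(A)\le\mu_t^{N,f}(A)$ for open $A$ --- immediate from the coupling, since annihilation only removes mass --- passes to the limit by the Portmanteau theorem, and extends to all Borel sets by outer regularity. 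Hence $\ovl\mu_t(B)\le \ovl\mu_t^f(B)$ a.s.\ for every Borel $B$, which gives both absolute continuity of $\ovl\mu_t$ and the pointwise density bound by $\mathsf p(t,x,v)$. The main obstacle is the nonlinear jump term in the continuity estimate, where naively $\|\theta^\ep\|_\infty\sim\ep^{-d}\sim N$ under local scaling; the saving is that after domination and exchangeability the integration against $\theta^\ep$ uses $\int\theta^\ep\,dy=1$ to absorb the singular prefactor, while the exponential tail in \eqref{exp-decay} handles the otherwise unbounded velocity integration.
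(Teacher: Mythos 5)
Your proposal is correct and follows essentially the same route as the paper's own proof: verify the two conditions of Proposition \ref{ppn:tight} by dominating with the independent system \eqref{free} (with $\beta=1/2$ in \eqref{AA-2} coming from the martingale quadratic variations and the nonlinear term controlled via $\int\theta^\ep\,dx_2=1$ plus the exponential velocity tail from \eqref{exp-decay}), then introduce the free empirical measure $\mu_t^{N,f}$, apply Varadarajan's theorem and Skorohod's representation, and pass the pathwise domination $\mu_t^{N}\le\mu_t^{N,f}$ to the limit to obtain absolute continuity and the bound by $\mathsf p(t,x,v)$. Both the decomposition of the increment and the domination-plus-Skorohod-representation device match the paper's argument, including the final extension from open to Borel sets.
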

Combining \eqref{eq-for-measure}, \eqref{mg-vanish-1}, \eqref{mg-vanish-2}, Proposition \ref{ppn:conv-nonlin} and Proposition \ref{ppn:subseq}, standard arguments can yield that any weak subsequential limit of $\{\mu_t^N(dx,dv):t\in[0,T]\}_{N\in\N}$ has a density $\ovl f(t,x,v)$ which is a (possibly random) weak solution of the \abbr{PDE} \eqref{pde}, in the sense of Definition \ref{def:weak}, provided we can remove the extra test function $\psi$ from the statement of Proposition \ref{ppn:conv-nonlin}, which is not present in the equation \eqref{eq-for-measure} satisfied by the empirical measure. We will also show in Section \ref{sec:unique} that weak solutions to \eqref{pde} are unique hence all subsequential limits must coincide and be nonrandom.

To this end, in place of $\psi(x,v)$ in \eqref{nonlinear}, consider a sequence $\{\psi^\Lambda(x,v)\}_{\Lambda\in\N}$ of the form $\psi^\Lambda(x,v)=\psi_1^\Lambda(x)\psi_2^\Lambda(v)$, where for $i=1,2$,
\begin{align*}
\psi_i^\Lambda: \R^d\to[0,1]
\end{align*}
is of class $C_c^\infty$ that is identically $1$ in $\B_E(0,\Lambda)$ and identically $0$ in $\B_E(0,2\Lambda)^c$. For any $i\neq j$, we have that 
\begin{align}\label{cut-remove}
&\int_0^T\theta^\ep\big(x_i^N(t)-x_j^N(t)\big)
\phi\big(t, x_i^N(t),v_i^N(t)\big) \, dt - 
\int_0^T\theta^\ep\big(x_i^N(t)-x_j^N(t)\big)
\phi\big(t, x_i^N(t),v_i^N(t)\big) \psi^\Lambda\big(x_j^N(t),v_j^N(t)\big)\, dt\nonumber\\
&=\int_0^T\theta^\ep\big(x_i^N(t)-x_j^N(t)\big)
\phi\big(t, x_i^N(t),v_i^N(t)\big) \big[1-\psi_1^\Lambda(x_j^N(t))\psi_2^\Lambda(v_j^N(t))\big]\, dt.
\end{align}
Since $|x_i^N(t)-x_j^N(t)|\le\ep$ in the support of $\theta^\ep$ and $x_i^N(t)$ is in the compact support of $\phi$, for $\Lambda$ large enough we have $\psi_1^\Lambda(x_j^N(t))=1$ and therefore
\begin{align*}
\eqref{cut-remove}=\int_0^T\theta^\ep\big(x_i^N(t)-x_j^N(t)\big)\phi\big(t, x_i^N(t),v_i^N(t)\big)
\big[1-\psi_2^\Lambda(v_j^N(t))\big]\,dt.
\end{align*}
Furthermore, we have that
\begin{lemma}\label{lem:remove}
For any finite $T$ and $d\ge1$,
\begin{align*}
\lim_{\Lambda\to\infty}\sup_{N\in\N}\E\int_0^T\frac{1}{N^2}\sum_{i\in\cN(t)}\sum_{j:j\neq i}\theta^\ep\big(x_i^N(t)-x_j^N(t)\big)
\big|\phi\big(t, x_i^N(t),v_i^N(t)\big)\big[1-\psi_2^\Lambda(v_j^N(t))\big]\big|dt=0.
\end{align*}
\end{lemma}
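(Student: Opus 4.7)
The plan is to dominate the quantity by its analogue for the independent system \eqref{free} and then use the product structure of the joint density together with the velocity tail bound \eqref{exp-decay} to obtain an integral which manifestly vanishes as $\Lambda\to\infty$. Since the integrand is nonnegative and every active particle $(x_i^N(t),v_i^N(t))$ coincides pathwise with the independent particle $(x_i(t),v_i(t))$ while alive, we may replace $\cN(t)$ by $\{1,\ldots,N\}$ and the $N$-dependent trajectories by the independent ones. By exchangeability and using $N(N-1)/N^2 \le 1$, the double sum reduces to the single pair $(1,2)$:
\begin{align*}
\E\int_0^T \theta^\ep\big(x_1(t)-x_2(t)\big)\big|\phi\big(t, x_1(t),v_1(t)\big)\big|\big[1-\psi_2^\Lambda(v_2(t))\big]\,dt.
\end{align*}

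Next I would write this expectation explicitly. Since particles $1$ and $2$ are independent, their joint density at time $t$ factors as $\mathsf p(t,x_1,v_1)\mathsf p(t,x_2,v_2)$. I integrate first in $x_2$ using $\int_{\R^d}\theta^\ep(x_1-x_2)dx_2=1$ together with the uniform-in-$x$ bound \eqref{exp-decay} from Lemma \ref{lem:den} to get
\begin{align*}
\int_{\R^d}\theta^\ep(x_1-x_2)\mathsf p(t,x_2,v_2)dx_2 \le Ce^{-|v_2|/C}\mathbf{1}_{\{|v_2|\ge C\}}+\Gamma\mathbf{1}_{\{|v_2|<C\}}=:g(v_2).
\end{align*}
The factor involving particle $1$ is then bounded by $\|\phi\|_{L^\infty}\int\mathbf{1}_{\mathrm{supp}(\phi)}(x_1,v_1)\mathsf p(t,x_1,v_1)dx_1dv_1\le \|\phi\|_{L^\infty}\Gamma|\B_E(0,K^\phi)|$, which is a constant depending only on $\phi$ and $\Gamma$. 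This produces a bound of the form
\begin{align*}
T\cdot C(\phi,\Gamma,d)\int_{\R^d}\big[1-\psi_2^\Lambda(v_2)\big]g(v_2)\,dv_2.
\end{align*}

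Finally, I would observe that $1-\psi_2^\Lambda(v_2)$ vanishes on $\B_E(0,\Lambda)$, so the last integral is at most $\int_{|v_2|\ge\Lambda}g(v_2)dv_2$, and since $g\in L^1(\R^d)$ thanks to the exponential decay of the first piece (the piece $\Gamma\mathbf{1}_{\{|v|<C\}}$ being compactly supported), this tail integral tends to zero as $\Lambda\to\infty$, uniformly in $N$. I do not expect any real obstacle in executing this plan; the entire argument is a standard bookkeeping exercise, and the only nontrivial input is the $L^\infty_x L^1_v$-type decay of $\mathsf p$ provided by \eqref{exp-decay}, which is exactly what is needed to convert the lack of compact support in $v$ into a genuinely vanishing tail.
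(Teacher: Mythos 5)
Your proposal is correct and follows essentially the same route as the paper: dominate by the independent system, reduce to the pair $(1,2)$, integrate out $x_2$ against $\theta^\ep$ using $\int\theta^\ep=1$, localize $(x_1,v_1)$ via the compact support of $\phi$ together with $\mathsf p\le\Gamma$, and finish with the exponential tail bound \eqref{exp-decay} to control the $\{|v_2|\ge\Lambda\}$ region uniformly in $N$. The only cosmetic difference is that you split the bound $g$ into its two pieces explicitly, while the paper implicitly assumes $\Lambda\ge C$ so that only the exponential piece survives; both are fine.
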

\begin{proof}
We can upper bound by the independent system \eqref{free} and consider each term of the form 
\begin{align*}
\E\int_0^T\theta^\ep(x_1(t)-x_2(t))\big|\phi\big(t, x_1(t),v_1(t)\big)\big[1-\psi_2^\Lambda(v_2(t))\big]\big|dt.
\end{align*}
Since $1-\psi_2^\Lambda(v_2(t))$ is identically $0$ in $\B_E(0,\Lambda)$ and bounded above by $1$ elsewhere, utilizing \eqref{exp-decay} we have that 
\begin{align*}
&\E\int_0^T\theta^\ep(x_1(t)-x_2(t))\big|\phi\big(t, x_1(t),v_1(t)\big)\big[1-\psi_2^\Lambda(v_2(t))\big]\big|dt\\
&\le C(\Gamma, R, T)\int_{\R^{4d}}\theta^\ep(x_1-x_2)1_{\{|x_1|,|v_1|\le K^\phi\}}
e^{-|v_2|/C}1_{\{|v_2|\ge \Lambda\}}\, dx_1dx_2dv_1dv_2\\
&\le C(\Gamma, R, T, \phi)\int_{\R^d}e^{-|v_2|/C}1_{\{|v_2|\ge \Lambda\}}\, dv_2\\
&\le C e^{-\Lambda/C},
\end{align*}
where $K^\phi$ denotes the maximal radius of the compact support of $\phi$, and $\int\theta^\ep(x_1-x_2)dx_2=1$ for every $x_1$. Note also $1/N^2$ cancels with the cardinality of the double sum, and the limit as $\Lambda\to\infty$ of the above expression is $0$, uniformly in $N$.
\end{proof}

\medskip
Since Proposition \ref{ppn:conv-nonlin} applies for $\psi=\psi^\Lambda$ for each $\Lambda>0$, \eqref{cut-remove} and Lemma \ref{lem:remove} yield a version of Proposition \ref{ppn:conv-nonlin} without the extra test function.
\begin{proposition}\label{ppn:conv-nonlin-new}
For any finite $T$ and $d\ge1$,
\begin{align*}
\lim_{\delta\to0}\limsup_{N\to\infty}&\,\E\Big|\int_0^T\frac{1}{N^2}\sum_{i\in\cN(t)}\sum_{j:j\neq i}\theta^\ep\big(x_i^N(t)-x_j^N(t)\big)
\phi\big(t, x_i^N(t),v_i^N(t)\big)\, dt\\
&-\int_0^T\int_{\R^{d}}dw\left\langle\eta^\delta(w-x_1)\eta^\delta(w-x_2)\phi(t, w,v_1), \;\mu_t^N(dx_1,dv_1)\mu_t^N(dx_2,dv_2)\right\rangle dt
\Big|=0.
\end{align*}
\end{proposition}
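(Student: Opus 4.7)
The plan is to derive the statement from Proposition \ref{ppn:conv-nonlin} applied with $\psi = \psi^\Lambda$, the cutoff from the paragraph above, and then take $\Lambda \to \infty$ after $\lim_{\delta\to 0}\limsup_{N\to\infty}$. By the triangle inequality, for each fixed $\Lambda$ the quantity inside $\E|\cdot|$ in Proposition \ref{ppn:conv-nonlin-new} is bounded above by $\text{(A)}+\text{(B)}+\text{(C)}$, where (A) is the particle-side cutoff error obtained by inserting $\psi^\Lambda(x_j^N,v_j^N)$ into the bare double sum, (B) is the expression controlled directly by Proposition \ref{ppn:conv-nonlin} with $\psi = \psi^\Lambda$, and (C) is the measure-side analogue of (A): the difference between the $\mu_t^N \otimes \mu_t^N$ integral without a test function and with $\psi^\Lambda(w,v_2)$ inserted.

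For $\Lambda \ge K^\phi + 1 > K^\phi + \ep$, identity \eqref{cut-remove} reduces (A) to the integrand of Lemma \ref{lem:remove}, whose conclusion gives $\text{(A)} \le C e^{-\Lambda/C}$ uniformly in $N$. Piece (B) vanishes in $\lim_{\delta\to 0}\limsup_{N\to\infty}$ for each fixed $\Lambda$, by Proposition \ref{ppn:conv-nonlin} specialised to $\psi = \psi^\Lambda$. Hence both (A) and (B) are manageable almost for free.

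Piece (C) is the main new step. Using $\psi_1^\Lambda(w) = 1$ on the compact $w$-support of $\phi$ for $\Lambda$ large, the bracket $1-\psi^\Lambda(w,v_2)$ reduces to $1 - \psi_2^\Lambda(v_2) \le \mathbf{1}_{\{|v_2|\ge\Lambda\}}$. After dominating $\mu_t^N$ by the independent system \eqref{free} and integrating in $w$, there appears the auto-correlation $\wt\eta^{2\delta}(x) := \int \eta^\delta(y)\eta^\delta(y-x)\,dy$, of integral $1$ and support radius $2\delta$. Exchangeability then splits the particle double sum into an $O(\delta^{-d}/N)$ diagonal term $i=j$ (which is killed by $\limsup_{N\to\infty}$ at fixed $\delta$) and an off-diagonal contribution bounded by $\int_0^T \E[\wt\eta^{2\delta}(x_1(t)-x_2(t))\mathbf{1}_{\{|v_2(t)|\ge\Lambda\}}]\,dt$. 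Since $x_1(t)$ is independent of $(x_2(t),v_2(t))$ and the position marginal $\int \mathsf p(t,\cdot,v)\,dv$ is uniformly bounded (by integrating the $v$-tail in \eqref{exp-decay}), the $\wt\eta^{2\delta}$-convolution is $L^\infty$-bounded uniformly in $\delta$; combined with $\int \wt\eta^{2\delta} = 1$ this yields the bound $C\,\P(|v_2(t)|\ge\Lambda) \le Ce^{-\Lambda/C}$, using Condition \ref{cond:ini} and the Gaussian character of $v_2(t)-v_2(0)$.

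Combining the three pieces, $\lim_{\delta\to 0}\limsup_{N\to\infty}[\text{(A)}+\text{(B)}+\text{(C)}] \le Ce^{-\Lambda/C}$, and sending $\Lambda \to \infty$ completes the proof. The main obstacle is handling (C) uniformly in $\delta$ despite the $\delta^{-d}$ blowup of $\|\eta^\delta\|_{L^\infty}$; this is resolved by exploiting independence to rewrite the singular product $\eta^\delta(w-x_1)\eta^\delta(w-x_2)$ as a convolution against the smooth, uniformly bounded density $\int \mathsf p(t,\cdot,v)\,dv$, where only the $L^1$-normalisation $\int \eta^\delta = 1$ enters.
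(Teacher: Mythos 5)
Your argument is correct and follows the same skeleton as the paper: apply Proposition \ref{ppn:conv-nonlin} with $\psi=\psi^\Lambda$, control the particle-side cutoff error via \eqref{cut-remove} and Lemma \ref{lem:remove}, and send $\Lambda\to\infty$ last. The one genuine addition is that you explicitly identify and control the measure-side cutoff error (C), which the paper's terse one-sentence justification leaves implicit: the difference between the $\mu^N_t\otimes\mu^N_t$ integral with and without $\psi^\Lambda$ must itself be shown small uniformly in $\delta$ and $N$. Your treatment of (C) is sound: reducing $1-\psi^\Lambda(w,v_2)$ to $\mathbf{1}_{\{|v_2|\ge\Lambda\}}$ on $\text{supp}(\phi)$, dominating by the free system, integrating out $w$ to produce $\wt\eta^{2\delta}=\eta^\delta*\eta^\delta$ with unit mass, splitting off the $i=j$ diagonal (of size $O(\delta^{-d}/N)$, killed by $N\to\infty$ at fixed $\delta$), and bounding the off-diagonal via independence and the uniform $L^\infty$ bound on the position marginal $\int\mathsf p(t,\cdot,v)\,dv$ extracted from \eqref{exp-decay}, which yields $C\,\P(|v_2(t)|\ge\Lambda)\le Ce^{-\Lambda/C}$. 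This is the right argument; the only thing to note is that it is essentially a re-run of the (A)/Lemma \ref{lem:remove} computation in a slightly different guise, which is presumably why the paper omits it.
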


Equipped with \eqref{eq-for-measure} and Proposition \ref{ppn:subseq}, we complete the proof of subsequential convergence of the empirical measures. Since the sequence of laws $\{\mathsf P_N\}_N$ is tight, we consider any weakly converging subsequence $\mathsf P_{N_k}\Rightarrow \mathsf P^*$. Let us denote by $\mu^*$ a random variable taking values in $\cD_T([0,T];\cM_{+,1})$, having the law $\mathsf P^*$. By Proposition \ref{ppn:subseq}, we know that $\mu^*_t(dx,dv)=f^*(t,x,v)dxdv$ for some density $f^*(t,x,v)$ that is dominated from above by the function $\mathsf p(t,x,v)$ \eqref{exp-decay}. Given any $\iota>0$ and for any $\phi\in C_c^\infty([0,T]\times\R^{2d})$, we have that 
\begin{align}\label{eq:concen}
&\P\Big(\Big|-\langle \phi_T, \mu_T^*\rangle +\langle \phi_0, \mu_0^*\rangle + \int_0^T \left\langle \partial_t\phi+v\cdot \nabla_x\phi +\frac{1}{2} \Delta_v\phi,\, \mu_t^*\right\rangle dt \nonumber\\
&\quad \quad -\int_0^T\int_{\R^{d}}dw\left\langle\eta^\delta(w-x_1)\eta^\delta(w-x_2)\phi(t, w,v_1), \;\mu_t^*(dx_1,dv_1)\mu_t^*(dx_2,dv_2)\right\rangle dt\Big|>3\iota\Big) \nonumber\\
&\le \liminf_{N\to\infty}\P\Big(\Big|-\langle \phi_T, \mu_T^N\rangle +\langle \phi_0, \mu_0^N\rangle + \int_0^T \left\langle \partial_t\phi+v\cdot \nabla_x\phi +\frac{1}{2} \Delta_v\phi,\, \mu_t^N\right\rangle dt \nonumber\\
&\quad\quad -\int_0^T\int_{\R^{d}}dw\left\langle\eta^\delta(w-x_1)\eta^\delta(w-x_2)\phi(t, w,v_1), \;\mu_t^N(dx_1,dv_1)\mu_t^N(dx_2,dv_2)\right\rangle dt\Big|>3\iota\Big) \nonumber\\
&\le \liminf_N\P\left(|M_T^{N,\phi, D}|>\iota\right)+\liminf_N\P\left(|M_T^{N,\phi, J}|>\iota\right) \nonumber\\
&\quad\quad +\liminf_N\P\Big(\Big|\int_0^T\frac{1}{N^2}\sum_{i\in\cN(t)}\sum_{j:j\neq i}\theta^\ep\big(x_i^N(t)-x_j^N(t)\big)
\phi\big(t, x_i^N(t),v_i^N(t)\big)\, dt \nonumber\\
&\quad\quad -\int_0^T\int_{\R^{d}}dw\left\langle\eta^\delta(w-x_1)\eta^\delta(w-x_2)\phi(t, w,v_1), \;\mu_t^N(dx_1,dv_1)\mu_t^N(dx_2,dv_2)\right\rangle dt
\Big|
>\iota\Big),
\end{align}
where we used the identity \eqref{eq-for-measure}. The first two limits are zero by \eqref{mg-vanish-1}, \eqref{mg-vanish-2}. Upon taking another limit  $\delta\to0$ the third term also is zero, by Proposition \ref{ppn:conv-nonlin-new}. It remains to show that as $\delta\to0$, 
\begin{align}\label{eq:dom}
&\int_0^T\int_{\R^{d}}dw\left\langle\eta^\delta(w-x_1)\eta^\delta(w-x_2)\phi(t, w,v_1), \;\mu_t^*(dx_1,dv_1)\mu_t^*(dx_2,dv_2)\right\rangle dt\nonumber\\
&\to \int_0^T\int_{\R^{3d}} \phi(t,w,v_1)f^*(t,w,v_1)f^*(t,w,v_2)dwdv_1dv_2dt,
\end{align}
which we will do by dominated convergence. We note that for any $\delta>0$ and $w\in \R^d$,
\begin{align*}
&\left|\left\langle\eta^\delta(w-x_1)\eta^\delta(w-x_2)\phi(t, w,v_1), \;\mu_t^*(dx_1,dv_1)\mu_t^*(dx_2,dv_2)\right\rangle \right|\\
&\le \int_{\R^{4d}} \eta^\delta(w-x_1)\eta^\delta(w-x_2)|\phi(t, w,v_1)|f^*(t,x_1,v_1)f^*(t,x_2,v_2)dx_1dv_1dx_2dv_2\\
&\le C\|\phi\|_{L^\infty}\int_{\R^{4d}} \eta^\delta(w-x_1)\eta^\delta(w-x_2)1_{\left\{w\in\text{supp}(\phi)\right\}}e^{-(|v_1|+|v_2|)/C}dx_1dv_1dx_2dv_2\\
&\le C\|\phi\|_{L^\infty}1_{\left\{w\in\text{supp}(\phi)\right\}}
\end{align*}
where we used that $f^*(t,x,v)\le \mathsf p(t,x,v)$ which has exponential tail in $|v|$, uniformly in $x$. We also used that $\int_{\R^d} \eta^\delta(w-x_i) dx_i=1$, $i=1,2$, for any fixed $w$. The \abbr{RHS} of the preceding display is integrable in $dwdt$, 

On the other hand, as $\delta\to0$ the integrand of the \abbr{LHS} of \eqref{eq:dom}
\[
\left\langle\eta^\delta(w-x_1)\eta^\delta(w-x_2)\phi(t, w,v_1), \;\mu_t^*(dx_1,dv_1)\mu_t^*(dx_2,dv_2)\right\rangle \to \int \phi(t,w,v_1)f^*(t,w,v_1)f^*(t,w,v_2)dwdv_1dv_2
\]
for every fixed $t,w$, hence by the dominated convergence theorem we arrive at \eqref{eq:dom}.

Taking the limit as $\delta\to0$ on both sides of \eqref{eq:concen}, we conclude that $\P$-a.s., $f^*(t,x,v)$ satisfies 
\begin{align*}
\langle \phi_T, f^*(T,x,v)\rangle =&\langle \phi_0, f^*(0,x,v)\rangle + \int_0^T \left\langle \partial_t\phi+v\cdot \nabla_x\phi +\frac{1}{2} \Delta_v\phi,\, f^*(t,x,v)\right\rangle dt\\
&- \int_0^T\int_{\R^{3d}} \phi(t,w,v_1)f^*(t,w,v_1)f^*(t,w,v_2)dwdv_1dv_2dt,
\end{align*}
which is the weak formulation of \eqref{pde}. The $\P$-null set can be taken to be independent of $\phi$ by applying the above argument only to a countable dense subset of $C_c^\infty$.

\section{Uniqueness of weak solution}\label{sec:unique}
\begin{definition}\label{def:weak}
Assume $f_0\in L^1_+(\R^{2d})$ (the space of nonnegative integrable functions) satisfies Condition \ref{cond:ini}, we say that 
\begin{align}\label{class}
f\in L^\infty\big([0,T], L^1_+(\R^{2d})\big)
\end{align}
is a weak solution to \eqref{pde}
\begin{align*}
\begin{cases}
\partial_t f(t,x,v) &= - v\cdot\nabla_x f + \frac{1}{2}\Delta_v f - 2\int_{\R^d} f(t,x,v) f(t,x,v') dv'\\
f|_{t=0} &= f_0(x,v)
\end{cases}
, \quad (t,x,v)\in[0,T]\times\R^d\times\R^d
\end{align*}
if for any $\phi(t,x,v)\in C_c^\infty ([0,T]\times\R^{2d})$ and $t\in[0,T]$ we have that 
\begin{align*}
\int_{\R^{2d}}f(t,x,v)\phi(t, x,v)dxdv &= \int_{\R^{2d}}f_0(x,v)\phi(0, x,v)dxdv \\
&+\int_0^t\int_{\R^{2d}}\big( \partial_s+v\cdot\nabla_x  + \frac{1}{2}\Delta_v\big)\phi(s, x,v)f(s,x,v)dxdvds\\
&- 2\int_0^t\int_{\R^{3d}} f(s,x,v) f(s,x,v')\phi(s, x,v) dxdvdv'dt.
\end{align*}
\end{definition}

\begin{proposition}\label{ppn:unique}
For every finite $T$ and $d\ge1$, weak solitions to \eqref{pde} are unique.
\end{proposition}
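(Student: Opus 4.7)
The plan is to combine a comparison principle (which upgrades the mere $L^1$ integrability in the definition of weak solution to an effective $L^\infty$ bound) with a Grönwall-type $L^1$ contraction estimate on the difference of two solutions. The overall architecture is classical, but the kinetic Fokker-Planck structure requires some care because of the hypoelliptic degeneracy and the unbounded transport $v\cdot\nabla_x$.

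First, I would establish the pointwise bound $0\le f(t,\cdot)\le \mathsf p(t,\cdot)$ a.e.\ for any weak solution $f$, where $\mathsf p$ is the linear density from Lemma \ref{lem:den}. Since $f,\rho_f\ge 0$, the nonlinearity $-2f\rho_f$ is nonpositive, so $f$ is a subsolution of the linear Fokker-Planck equation $\partial_t f = L^* f$ with $L^*=-v\cdot\nabla_x+\tfrac12\Delta_v$. The comparison follows by duality: for nonneg $\varphi\in C_c^\infty(\R^{2d})$ and $t\in[0,T]$, set $\psi(s,x,v):=\E[\varphi(X_t,V_t)\mid (X_s,V_s)=(x,v)]$, where $(X,V)$ is the Langevin process with generator $L=v\cdot\nabla_x+\tfrac12\Delta_v$. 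By hypoellipticity, $\psi$ is smooth and nonneg on $[0,t)\times\R^{2d}$; after a cutoff/approximation to make it an admissible test function, inserting it in the weak formulation and integrating by parts yields $\int f(t)\varphi \le \int f_0\,\psi(0,\cdot) = \int \mathsf p(t)\varphi$. Since $\varphi$ is arbitrary, $f\le \mathsf p$; in particular, by \eqref{exp-decay},
\[
\rho_f(t,x):=\int_{\R^d} f(t,x,v)\,dv \;\le\; \int_{\R^d} \mathsf p(t,x,v)\,dv \;\le\; C(R,\Gamma,T,d),
\]
uniformly in $(t,x)\in[0,T]\times\R^d$.

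Second, for two weak solutions $f_1,f_2$ with identical initial data, subtracting their weak formulations shows that $g:=f_1-f_2$ satisfies weakly
\[
\partial_t g + v\cdot\nabla_x g - \tfrac12\Delta_v g = -2\rho_1 g - 2 f_2\langle g\rangle_v,\qquad g|_{t=0}=0,
\]
where $\langle g\rangle_v(x):=\int g(x,v')\,dv'$. Testing against $\beta_\delta'(g)\chi_R$, with $\beta_\delta$ a smooth convex approximation of $|\cdot|$ and $\chi_R$ a smooth cutoff in $(x,v)$, the diffusion term contributes $-\tfrac12\int\beta_\delta''(g)|\nabla_v g|^2\chi_R \le 0$ (Kato's inequality), the transport term vanishes modulo cutoff errors, and passing $\delta\to 0$ and then $R\to\infty$ (using $|g|\le 2\mathsf p$ and its exponential $v$-decay from Step 1, so $v\cdot g\in L^1$) yields
\[
\frac{d}{dt}\|g(t)\|_{L^1} \;\le\; -2\!\int \rho_1|g|\,dxdv + 2\!\int f_2(x,v)\bigl|\langle g\rangle_v(x)\bigr|\,dxdv \;\le\; 2\|\rho_2\|_{\infty}\,\|g(t)\|_{L^1}.
\]
Grönwall's inequality with $\|g(0)\|_{L^1}=0$ forces $g\equiv 0$.

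The main obstacle is the rigorous justification of the chain-rule manipulation in Step 2, since $\beta_\delta'(g)$ is not a bona-fide $C_c^\infty$ test function. The cleanest fix is to use the $L^\infty$ bound from Step 1, which makes the nonlinear forcing $f_i\rho_i$ bounded, and then invoke hypoellipticity of $L^*$ to upgrade $f_1,f_2$ to $C^\infty$ on $(0,T]\times\R^{2d}$, so that the chain rule is classical. Alternatively, one may work with renormalized solutions \`a la DiPerna-Lions, deriving a weak inequality for $|g|$ directly and integrating. Either route delivers the Grönwall estimate and hence uniqueness.
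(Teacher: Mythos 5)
Your argument is correct in outline and shares the same skeletal structure as the paper's (comparison to the free density $\mathsf p$ to get the uniform bound on $\rho_i$, then an $L^1$-Gr\"onwall estimate on $g=f_1-f_2$), but the middle step takes a genuinely different route. To convert the weak equation for $g$ into $\tfrac{d}{dt}\|g\|_{L^1}\le C\|g\|_{L^1}$, you test against $\beta_\delta'(g)\chi_R$ and invoke Kato's inequality, which forces you to confront the lack of a priori regularity of $g$: $\beta_\delta'(g)$ is not an admissible test function, and making the renormalization rigorous either needs a DiPerna--Lions-type theory for the hypoelliptic operator or a bootstrap to smoothness of the nonlinear solutions, which (as you note) is an obstacle, not a footnote. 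The paper instead passes to the mild (Duhamel) formulation: testing the weak identity with $\phi(s,\cdot)=e^{(t-s)L}\wt\phi$ turns the weak equation into $\ovl f(t)=-2\int_0^t e^{(t-s)L^*}[\,\cdots\,]\,ds$, and the Gr\"onwall estimate then follows simply from the fact that $P(t)=e^{tL^*}$ is an $L^1$-contraction. This completely sidesteps the chain-rule/renormalization issue, at the modest cost of needing the semigroup properties cited from \cite{GT, Pr}. Your first step (duality argument for $0\le f\le\mathsf p$) is a more explicit rendering of what the paper dismisses as ``a simple comparison,'' so it is compatible; but for Step 2 you should either carry out one of your two proposed fixes in full, or switch to the Duhamel approach, which delivers the same conclusion with less machinery.
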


\begin{proof}
As in Lemma \ref{lem:den}, we denote the operator
\begin{align*}
L^*=-v\cdot\nabla_x+\frac{1}{2}\Delta_v,
\end{align*}
and by $\{P(t)\}_{t>0}$ the Markov semigroup associated with $L^*$. It is a strongly continuous semigroup on $L^p(\R^{2d})$, for $1\le p<\infty$, see \cite[Corollary 2.6]{GT}, which we also denote by $P(t)=e^{tL^*}$. Let $f_1,f_2$ be two weak solutions of \eqref{pde} in the sense of Definition \ref{def:weak}, and set 
\[
\ovl f:= f_1-f_2. 
\]
Then, $\ovl f$ satisfies in weak form
\begin{align*}
\begin{cases}
(\partial_t-L^*)\ovl f(t,x,v) &= -2\int_{\R^d} f_1(t,x,v)\ovl f(t,x,v')dv'-2\int_{\R^d} \ovl f(t,x,v)f_2(t,x,v')dv'\\
\ovl f|_{t=0}&=0.
\end{cases}
\end{align*}
That is, for any $\phi(t,x,v)\in C_c^\infty([0,T]\times\R^{2d})$ and $t\in[0,T]$, we have that 
\begin{align*}
\int_{\R^{2d}}\ovl f(t,x,v)\phi(t, x,v)dxdv &= \int_0^t\int_{\R^{2d}}\big( \partial_s+L\big)\phi(s, x,v)\ovl f(s,x,v)dxdvds\\
&- 2\int_0^t\int_{\R^{3d}} \left[f_1(s,x,v) \ovl f(s,x,v')+\ovl f(s,x,v) f_2(s,x,v')\right] \phi(s, x,v)\, dv'dxdvds.
\end{align*}
A weak solution is also a mild solution. Indeed, for each $t$ taking $\phi(s,x,v)=\left(e^{(t-s)L}\wt \phi\right)(x,v)$ for $\wt \phi(x,v)\in C_c^\infty(\R^{2d})$, we obtain that 
\begin{align*}
&\int_{\R^{2d}}\ovl f(t,x,v)\wt\phi(x,v)dxdv = - 2\int_0^t\int_{\R^{3d}} \left[f_1(s,x,v) \ovl f(s,x,v')+\ovl f(s,x,v) f_2(s,x,v)\right] \left(e^{(t-s)L}\wt \phi\right)(x,v) dxdvdv'ds\\
&=-2\int_0^t\int_{\R^{2d}}e^{(t-s)L^*}\left(\int_{\R^d} f_1(s,\cdot,\cdot)\ovl f(s,\cdot,v')dv'+\int_{\R^d} \ovl f(s,\cdot,\cdot)f_2(s,\cdot,v')dv'\right)(x,v)\, \wt \phi(x,v)\, dxdvds.
\end{align*}
By the arbitrariness of $\wt \phi$, we see that $\ovl f$ satisfies
\begin{align}\label{mild}
\ovl f(t,x,v)=-2\int_0^te^{(t-s)L^*}\left(\int_{\R^d} f_1(s,\cdot,\cdot)\ovl f(s,\cdot,v')dv'+\int_{\R^d} \ovl f(s,\cdot,\cdot)f_2(s,\cdot,v')dv'\right)(x,v)ds.
\end{align}
Since $P(t)=e^{tL^*}$ is a contraction in $L^p(\R^{2d})$, $1\le p\le \infty$, cf. \cite[pp. 255, last line]{Pr}, \cite[Lemma 2.4 (iv)]{GT}, in particular when $p=1$, we have by taking $L^1(\R^{2d})$-norm of \eqref{mild} that for any $t\in[0,T]$,
\begin{align}\label{intermediate}
\int_{\R^{2d}}\left|\ovl f(t,x,v)\right|dxdv&\le 2\int_0^t\left\|e^{(t-s)L^*}\left(\int_{\R^d} f_1(s,\cdot,\cdot)\ovl f(s,\cdot,v')dv'+\int_{\R^d} \ovl f(s,\cdot,\cdot)f_2(s,\cdot,v')dv'\right)\right\|_{L^1(\R^{2d})}ds  \nonumber\\
&\le  2\int_0^t\left\|\int_{\R^d} f_1(s,\cdot,\cdot)\ovl f(s,\cdot,v')dv'+\int_{\R^d} \ovl f(s,\cdot,\cdot)f_2(s,\cdot,v')dv'\right\|_{L^1(\R^{2d})}ds  \nonumber\\
&\le 2\int_0^t \int_{\R^{3d}} f_1(s,x,v)\left|\ovl f(s,x,v')\right|dxdvdv'ds+2\int_0^t\int_{\R^{3d}} \left|\ovl f(s,x,v)\right|f_2(s,x,v')dxdvdv'ds.
\end{align}
By a simple comparison, we have that 
\begin{align*}
0\le f_i(t,x,v)\le g(t,x,v), \quad i=1,2,
\end{align*}
where $g$ is the unique (weak) solution to
\begin{align*}
(\partial_t-L^*) g(t,x,v) &=0.\\
g|_{t=0}&=f_0(x,v).
\end{align*}
Further, $g(t)=P(t)f_0$ is precisely the function $\mathsf p(t,x,v)$ in \eqref{exp-decay} hence, since $f_0$ satisfies Condition \ref{cond:ini}, we have that for some finite constant $C(T,\Gamma,R,d)$ and uniformly for all $t\in[0,T]$ and $x\in\R^d$
\begin{align}\label{v-marginal}
\int_{\R^d} f_i(t,x,v)dv\le \int_{\R^d} g(t,x,v)dv\le  C(T,\Gamma,R,d), \quad i=1,2.
\end{align}
Hence, we obtain by \eqref{intermediate}-\eqref{v-marginal} that for any $t\in[0,T]$,
\begin{align*}
\left\|\ovl f(t)\right\|_{L^1(\R^{2d})}&\le C\int_0^t \int_{\R^{2d}}\left|\ovl f(s,x,v')\right|dxdv'ds+ C\int_0^t\int_{\R^{2d}} \left|\ovl f(s,x,v)\right|dxdvds\\
&=C\int_0^t\left\|\ovl f(s)\right\|_{L^1(\R^{2d})}ds.
\end{align*}
By Gronwall's lemma, we have that $\ovl f=f_1-f_2=0$ in the class \eqref{class}.
\end{proof}

\medskip
Since any weak subsequential limit obtained in Proposition \ref{ppn:subseq} is supported in the class \eqref{class}, by Proposition \ref{ppn:unique} we have uniqueness of subsequential limits hence convergence in law of the full sequence $\{\mu^N_t\}_{N\in\N}$ to the unique weak solution of the \abbr{PDE}, as stated in Theorem \ref{thm:main}. Since the limit is deterministic, we actually have convergence in probability.

\medskip
\noindent
{\textbf{Acknowlegments.}} The author thanks Prof. F. Flandoli for valuable discussions, corrections and pointers. The author thanks two anonymous referees whose constructive feedback significantly improved the presentation of the paper. This research was supported in part by a CRM-Pisa Junior Visiting Position 2020-22.


\begin{thebibliography}{9}
\bibitem{AP}
F. Anceschi and S. Polidoro. A survey on the classical theory for Kolmogorov equation. {\it{Le Matematiche}} {\bf{LXXV}} (2020), 221-258.

\bibitem{BAG}
G. Ben Arous and M. Gradinaru. Singularities of hypoelliptic Green functions. {\it{Potential Anal.}} {\bf{8}} (1998), 217–258.

\bibitem{PB}
P. Billingsley. {\it{Convergence of probability measures.}} Second Edition. John Wiley $\&$ Sons, Inc. 1999.

\bibitem{BJS}
D. Bresch, P.-E. Jabin and J. Soler. A new approach to the mean-field limit of Vlasov-Fokker-Planck equations. {\it{ArXiv:2203.15747}}.

\bibitem{DN}
R.W.R. Darling and J.R. Norris. Differential equation approximations for Markov chains. {\it{Probab. Surveys}} {\bf{5}} (2008), 37-79.

\bibitem{DP}
A. De Masi and E. Presutti. {\it{Mathematical methods for hydrodynamic limits}}, volume {\bf{1501}} of Lecture Notes in Mathematics. Springer-Verlag, Berlin, 1991.

\bibitem{Du}
R.M. Dudley. {\it{Real analysis and probability.}} Cambridge studies in advanced mathematics, Cambridge University Press, 2002.

\bibitem{EK}
S.N. Ethier and T.G. Kurtz. {\it{Markov processes: characterization and convergence}}. John Wiley $\&$ Sons, 1986.

\bibitem{FFPV}
E. Fedrizzi, F. Flandoli, E. Priola and J. Vovelle. Regularity of stochastic kinetic equations. {\it{Electron. J. Probab.}} {\bf{48}} (2017), 1-42.

\bibitem{FSC}
C.L. Fefferman and A. S\'anchez-Calle. Fundamental solutions for second order subelliptic operators. {\it{Ann. of Math. (2)}} {\bf{124}} (1986) 247-272.

\bibitem{FGP}
F. Flandoli, M. Gubinelli and E. Priola. Well-posedness of the transport equation by stochastic perturbation. {\it{Invent. Math.}} {\bf{180}} (2010), 1-53.

\bibitem{FH}
F. Flandoli and R. Huang. The KPP equation as a scaling limit of locally interacting Brownian particles. {\it{J. Differ. Equations}} {\bf{303}} (2021), 608-644.

\bibitem{FH2}
F. Flandoli and R. Huang. Coagulation dynamics under environmental noise:
Scaling limit to SPDE. {\it{ALEA Lat. Am. J. Probab. Math. Stat.}} {\bf{19}} (2022), 1241-1292.

\bibitem{FLO}
F. Flandoli, M. Leimbach and C. Olivera. Uniform convergence of proliferating particles to the FKPP equation.
{\it{J. Math. Anal. Appl.}}, {\bf{473}} (2019), 27-52.

\bibitem{GT}
N. Garofalo, G. Tralli. A class of nonlocal hypoelliptic operators
and their extensions. {\it{Indiana Univ. Math. J.}} {\bf{70}} (2021), 1717-1744.

\bibitem{HR}
A. Hammond and F. Rezakhanlou. The kinetic limit of a system of coagulating Brownian particles. {\it{Arch. Rat. Mech. Anal.}} {\bf{185}} (2007), 1-67.

\bibitem{LH}
L. H\"ormander. Hypoelliptic second order differential equations. {\it{Acta Math.}}  {\bf{119}} (1967), 147-171.

\bibitem{JW}
P.-E. Jabin and Z. Wang. Mean field limit for stochastic particle systems. In: {\it{Active Particles, Volume 1}}, 379-402. Birkh\"auser, Cham, 2017.


\bibitem{KL}
C. Kipnis and C. Landim. {\it{Scaling limits of interacting particle systems}}, volume {\bf{320}} of Grundlehren der Mathematischen
Wissenschaften [Fundamental Principles of Mathematical Sciences]. Springer-Verlag, Berlin, 1999.

\bibitem{Ko}
A. Kolmogoroff. Zufallige Bewegungen (zur Theorie der Brownschen Bewegung).
{\it{Ann. of Math. (2)}} {\bf{35}} (1934), 116-117.


\bibitem{LX}
R. Lang and X.-X. Nguyen. Smoluchowski's theory of coagulation in colloids holds rigorously in the Boltzmann-Grad-limit. {\it{Z. Wahrsch. Verw. Gebiete}} {\bf{54}} (1980), 227-280.

\bibitem{MC}
S. Méléard and S. Roelly-Coppoletta. A propagation of chaos result for a system of particles with moderate interaction. {\it{Stochastic Process. Appl.}} {\bf{26}} (1987), 317-332.

\bibitem{NSW}
A. Nagel, E.M. Stein and S. Wainger. Balls and metrics defined by vector fields I: Basic properties. {\it{Acta Math.}} {\bf{155}} (1985), 103-147.

\bibitem{Nua}
D. Nualart. {\it{The Malliavin Calculus and related topics.}} Second Edition. Probability and Its Applications. Springer-Verlag Berlin Heidelberg 2006.

\bibitem{Oe}
K. Oelschläger. A law of large numbers for moderately interacting diffusion processes. {\it{Z. Wahrsch. Verw. Gebiete}} {\bf{69}} (1985), 279–322.

\bibitem{Oe2}
K. Oelschläger. On the derivation of reaction-diffusion equations as limit dynamics of systems of moderately interacting stochastic processes. {\it{Probab. Theory Related Fields}} {\bf{82}} (1989), 565-586.

\bibitem{Pr}
E. Priola. On weak uniqueness for some degenerate SDEs by global $L^p$ estimates. {\it{Potential Anal.}} {\bf{42}} (2015), 247-281.

\bibitem{Rez}
F. Rezakhanlou. Boltzmann-Grad limits for stochastic hard sphere models. {\it{Comm. Math. Phys.}} {\bf{248}} (2004), 553-637.

\bibitem{RS}
L.P. Rothschilde and E.M. Stein. Hypoelliptic differential operators and nilpotent groups. {\it{Acta Math.}} {\bf{137}} (1977), 247-320.

\bibitem{SC}
A. S\'anchez-Calle. Fundamental solutions and geometry of the sum of squares of vector fields. {\it{Invent. Math.}} {\bf{78}} (1984), 143-160.

\bibitem{St}
K.T. Sturm. Analysis on local Dirichlet spaces. II. Upper Gaussian estimates for the fundamental solutions of parabolic equations. {\it{Osaka J. Math.}} {\bf{32}} (1995), 275-312.

\bibitem{ASZ}
A.-S. Sznitman. Propagation of chaos for a system of annihilating
Brownian spheres. {\it{Comm. Pure Appl. Math.}} {\bf{XL}} (1987), 663-690.

\bibitem{Uc}
K. Uchiyama. Pressure in classical statistical mechanics and interacting Brownian particles in multi-dimensions.
{\it{Ann. Henri Poincaré}} {\bf{1}} (2000), 1159-1202

\bibitem{Va}
S.R.S. Varadhan. Scaling limits for interacting diffusions. {\it{Comm. Math. Phys.}} {\bf{135}} (1991), 313-353.

\bibitem{NV}
N.Th. Varopoulos. Green's functions on positively curved manifolds. {\it{J. Funct. Anal.}} {\bf{45}} (1982), 109-118.

\end{thebibliography}
\end{document}